\providecommand\@dotsep{5}
 \def\a{\alpha}
 \def\da{{\dot\alpha}}
 \def\be{\beta}
 \def\dbe{{\dot\beta}}
 \def\de{\delta}
 \def\e{\varepsilon}
 \def\deta{{\dot{\eta}}}
 \def\ga{\gamma}
 \def\dga{{\dot{\gamma}}}
 \def\Ga{\Gamma}
 \def\vr{\varphi}
 \def\ka{\kappa}
 \def\la{\lambda}
 \def\La{\Lambda}
 \def\si{\sigma}
 \def\Si{\Sigma}
 \def\bSi{{\mathbf\Sigma}}
 \def\om{\omega}
 \def\th{\theta}
 \def\dzeta{{\dot{\zeta}}}
 \def\re{{\mathbb R}}
 \def\na{{\mathbb N}}
 \def\then{\Longrightarrow}
 \def\ov{\overline}
 \def\Z{{\mathbb Z}}
 \def\A{{\mathbb A}}
 \def\cB{{\mathcal B}}
 \def\cC{{\mathcal C}}
 \def\D{{\mathbb D}}
 \def\E{{\mathbb E}}
 \def\cF{{\mathcal F}}
 \def\F{{\mathbb F}}
 \def\cG{{\mathcal G}}
 \def\H{{\mathbb H}}
 \def\cJ{{\mathcal J}}
 \def\cL{{\mathcal L}}
 \def\cN{{\mathcal N}}
 \def\oom{{\ov{m}}}
 \def\cO{{\mathcal O}}
 \def\P{{\mathbb P}}
 \def\SS{{\mathbb S}}
 \def\cT{{\mathcal T}}
 \def\cU{{\mathcal U}}
 \def\V{{\mathbb V}}
 \def\cW{{\mathcal W}}
 \def\W{{\mathbb W}}
 \def\tq1{{\tilde{q}_1}}
 \def\dw{{\dot{w}}}
 \def\X{{\mathbb X}}
 \def\dz{{\dot{z}}}
 \def \lV{\left\Vert}
 \def \rV{\right\Vert}
 \def \ov{\overline}
 \def \then{\Longrightarrow}
 \DeclareMathOperator*{\tsum}{{\textstyle \sum}}
 \DeclareMathOperator{\interior}{int}
 \DeclareMathOperator*{\Fix}{Fix} 
 \DeclareMathOperator{\graph}{graph}
 \DeclareMathOperator{\Emb}{Emb}
 \DeclareMathOperator{\Diff}{Diff}
 \DeclareMathOperator{\inj}{inj}
 \DeclareMathOperator{\trap}{trap}
  \renewcommand{\proofname}{{\bf Proof:}}
 \theoremstyle{plain}
 \newtheorem{MainThm}{Theorem}
 \newtheorem{Thm}{Theorem}[section]
 \newtheorem{Lemma}[Thm]{\bf Lemma}
 \newtheorem{Corollary}[Thm]{\bf Corollary}
 \newtheorem{Theorem}[Thm]{\bf Theorem}
 \newtheorem{Proposition}[Thm]{\bf Proposition}
\newtheorem{claim}[Thm]{\sc Claim}
 \theoremstyle{definition}
 \newtheorem{Definition}[Thm]{\bf Definition}
 \theoremstyle{remark}
 \newtheorem{Remark}[Thm]{\bf Remark}
 \newtheoremstyle{Cl}
  {5pt}
  {3pt}
  {\sl}
  {}
  {\it}
  {:}
  {.5em}
  {}
 \theoremstyle{Cl}
 \def\begincproof{
                  \renewcommand{\proofname}{\it Proof:}
                  \begin{proof}
                 }
 \def\endcproof{
                \renewcommand{\qedsymbol}{$\diamondsuit$}
                \end{proof} 
                \renewcommand{\qedsymbol}{\openbox}
                \renewcommand{\proofname}{\bf Proof:}
               }
 \title{Homoclinic orbits for geodesic flows of surfaces} 
\author{Gonzalo Contreras}  
\address{Gonzalo Contreras\newline\indent 
Centro de Investigaci\'on en Matem\'aticas\newline\indent 
A.P. 402, 36.000, Guanajuato, GTO, Mexico}
\author{Fernando Oliveira}
\address{Fernando Oliveira\newline\indent 
Universidade Federal de Minas Gerais\newline\indent
Av. Ant\^onio Carlos 6627, 
31270-901, Belo Horizonte,
MG, Brasil.}
\thanks{Gonzalo Contreras is partially supported by CONACYT, Mexico, grant A1-S-10145.}
\subjclass[2020]{ 37D40, 53D25, 37C29}
\begin{document}

\maketitle

\parskip +5pt

\begin{abstract}
We prove that the geodesic flow of a
Kupka-Smale riemannian metric on a closed surface
has homoclinic orbits for all of its hyperbolic closed geodesics.
\end{abstract}


\tableofcontents


 \def\Krad{{\color{red}K_{rad}}}
 \def\Krot{{K_{rot}}}
 \def\Kfix{{K_{fix}}}
 \def\Ws{{W^s}}
 \def\Wu{{W^u}}

 \section{Introduction.}

 \hskip 2.2cm  
{ \LARGE L}et $(M,\rho)$ be a closed (i.e. compact, boundaryless) riemannian surface. 
 \linebreak
 Let
 $
 SM=\{(x,v):\rho(v,v)=1\}
 $
 be its unit tangent bundle with projection $\pi:SM\to M$, $\pi(x,v)=x$.
 The {\it geodesic flow}  $\phi_t:SM\to SM$ of $(M,\rho)$ is defined by
 $\phi_t(x,v)=(\ga(t),\dga(t))$, where $\ga$ is the unit speed geodesic 
 with $(\ga(0),\dga(0))=(x,v)$.
  
  A closed orbit for $\phi_t$ is {\it hyperbolic} if its Floquet 
  multipliers  do not have modulus 1. The (strong) {\it stable} and {\it unstable}
  {\it manifolds} of a point $z\in SM$ are
  $$
  W^{s,u}(z):=\{w\in SM: \lim\nolimits_{t\to\pm\infty}d(\phi_t(w),\phi_t(z))=0\},
  $$
  respectively.
  For a subset $A\subset SM$ define
  $$
  W^{s,u}(A)=\textstyle\bigcup_{a\in A}W^{s,u}(a).
  $$
  For a hyperbolic closed geodesic $\ga$ the sets $W^s(\dga)$, $W^u(\dga)$
  are immersed submanifolds of $SM$ either diffeomorphic to a cylinder
  with one boundary $\dga$
  or to a M\"obius band where $\dga$ is its equator, according to wether
  the Floquet multipliers of $\dga$ are positive or negative respectively.
  A {\it homoclinic point} of a hyperbolic closed geodesic $\ga$
  is a point in 
  \linebreak
  $(W^s(\dga)\cap W^u(\dga))\setminus\dga$.
  A {\it heteroclinic} point is a point in 
  $(W^s(\dga)\setminus\dga)\cap (W^u(\deta)\setminus\deta)$, where $\dga$,
  $\deta$ are two hyperbolic  closed orbits of $\phi_t$.
  
  Homoclinic points where first discovered by Henri Poincar\'e in 1889 (cf. Andersson~\cite{Andersson})
and named in Poincar\'e~\cite[\S 395]{PoincareIII}.
It is well known the paragraph of Poincar\'e~\cite[vol. III,  \S 397]{PoincareIII},~\cite[\S 5]{Andersson} 
describing his admiration of the 
complexity of the dynamics implied by the existence of a transversal homoclinic point.

  We say that the riemannian metric $\rho$ is {\it Kupka-Smale} if
  \begin{enumerate}[(i)]
  \item\label{ks1} The Floquet multipliers of every periodic orbit are not roots of unity.
  \item\label{ks2} The heteroclinic intersections of hyperbolic orbits 
  $W^s(\dga)\pitchfork W^u(\deta)$ are transversal.
  \end{enumerate}

 For any $r\in\na$, $r\ge 2$, the set of $C^r$ riemannian metrics whose geodesic flow is 
 Kupka-Smale is residual in the set of $C^r$ riemannian metrics in $M$, see
 Contreras, Paternain \cite[Thm.~2.5]{CP2}. Clarke~\cite{aclarke1} proves that Kupka-Smale
 metrics are also residual in the $C^\om$ topology for analytic hypersurfaces of $\re^n$, $n\ge 3$.
 Here we prove

\begin{MainThm}\label{TA}\quad

For a Kupka-Smale riemannian metric on a closed surface
every hyperbolic closed 
geodesic $\ga$ has homoclinic orbits 
in all the components of 
$W^s(\dga)\setminus \dga$ and of $W^u(\dga)\setminus \dga$
and satisfy $\ov W^s(\dga)=\ov W^u(\dga)$.
\end{MainThm}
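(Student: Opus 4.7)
I would reduce the statement to a problem about branches of invariant manifolds of a hyperbolic fixed point of an area-preserving diffeomorphism of a surface. Pick a small disk $\Sigma\subset SM$ transverse to $\dga$ at a point $p\in\dga$. After shrinking, the first-return map $P:\Sigma_0\to\Sigma$ is defined on a neighborhood $\Sigma_0$ of $p$ and preserves the area form obtained by contracting the Liouville volume with the geodesic vector field. By Kupka--Smale hypothesis (i) the Floquet multipliers of $\dga$ are real and distinct from $\pm 1$, so $p$ is a hyperbolic saddle of $P$ (or of $P^2$ in the negative-multiplier, M\"obius case). The stable and unstable manifolds of $\dga$ correspond in $\Sigma$ to $1$-dimensional $P$-invariant curves $W^{s,u}_P(p)$, and the components of $\Wu(\dga)\setminus\dga$ and of $\Ws(\dga)\setminus\dga$ are the flow-saturations of the branches of $W^{u,s}_P(p)$.

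\textbf{Key step.} Let $\alpha$ be a branch of $W^u_P(p)$, an embedded arc with endpoint $p$. Since $\Sigma$ is relatively compact in $SM$, $\alpha$ must accumulate somewhere, and $\ov{\alpha}\setminus\alpha$ is a nonempty compact $P$-invariant set. The technical heart of the theorem is to establish
\[
\ov{\alpha}\cap W^s_P(p)\ \neq\ \{p\}.
\]
The argument should use area-preservation of $P$ in the Birkhoff--Mather--Le~Calvez spirit: if $\ov{\alpha}$ avoided $W^s_P(p)$, one could bound an open topological disk by a subarc of $\alpha$ together with a small transverse arc, a disk $P$ would translate disjointly from itself, contradicting area-conservation; equivalently, the configuration would produce a Brouwer free-disk situation incompatible with having the fixed point $p$ in $\ov{\alpha}$. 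The non-resonance hypothesis (i) is needed to rule out competing obstructions coming from elliptic periodic points of $P$ near $\alpha$, whose KAM invariant circles might otherwise trap the branch.

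\textbf{Conclusion.} Once $\ov{\alpha}$ is known to meet $\Ws(\dga)$ off $\dga$, Kupka--Smale hypothesis (ii) upgrades the intersection to a transverse one along the corresponding orbit, so $\alpha$ contains a transverse homoclinic point. The Birkhoff--Smale $\lambda$-lemma then ensures that $\ov{\alpha}$ accumulates on the whole of $\Ws(\dga)$ locally, so $\ov{\Wu(\dga)}\supset \Ws(\dga)$; applying the same reasoning to $P^{-1}$ gives $\ov{\Ws(\dga)}\supset\Wu(\dga)$, and the two closures coincide. Running the Key Step separately on each branch of $W^u_P(p)$ (with the M\"obius case reduced to $P^2$) and dually on each branch of $W^s_P(p)$ then provides homoclinic orbits in every prescribed component.

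\textbf{Main obstacle.} The difficulty is concentrated in the Key Step. Area-preservation alone does not force a \emph{particular} branch of $W^u_P(p)$ to return to $W^s_P(p)$: a~priori $\alpha$ could accumulate on a disjoint invariant set such as another hyperbolic closed orbit and its connections, a quasi-periodic invariant circle, or a more pathological continuum, and the Kupka--Smale condition by itself does not exclude these alternatives. Ruling them out will require a delicate combination of surface-dynamics tools --- Brouwer translation arcs and equivariant foliations of Le~Calvez type --- with a careful analysis of how stable and unstable manifolds of \emph{other} hyperbolic periodic orbits generically interact, and this is where essentially all of the technical work of the paper should go.
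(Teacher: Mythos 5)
There is a genuine gap, and it sits exactly where you placed your ``Key step.'' Your reduction is to the first-return map $P$ of a \emph{local} transversal disk $\Sigma$ at a point of $\dga$. For such a local section, $P$ is only defined on the (typically very thin) set of points of $\Sigma$ that return to $\Sigma$, it is not an area-preserving homeomorphism of a compact surface, and a branch $\alpha$ of $W^u_P(p)$ can simply leave $\Sigma$ and never come back; its closure in $\ov\Sigma$ is not $P$-invariant in any usable sense, so the Birkhoff--Mather--Le~Calvez translation/free-disk arguments you invoke have no footing. Worse, the dichotomy you hope to prove is false at this level of generality: the time-one map of an area-preserving flow on a higher-genus surface without heteroclinic connections is Kupka--Smale and area-preserving, yet has saddle branches that never meet the corresponding stable manifolds. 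So area-preservation plus conditions (i)--(ii) cannot by themselves yield $\ov\alpha\cap W^s_P(p)\ne\{p\}$; some global topological input is indispensable, and your proposal does not identify it.

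The paper's route supplies precisely that missing input. First, one works with \emph{global} surfaces of section: a Birkhoff section (Contreras--Mazzucchelli) gives a genuinely area-preserving return map on a compact surface with boundary, and the Mather/Oliveira--Contreras accumulation theory (extended in \cite{OC2}, \cite{OC1} to partially defined maps, since the return map here is essentially discontinuous) yields the auto-accumulation statement $\ov{\Ws(\dga)}=\ov{\Wu(\dga)}$ \emph{before} and independently of any homoclinic point --- not as a consequence of one, as in your Conclusion. Second, the existence of homoclinics requires the section to have genus $0$ or $1$; this is where the topology of curves on low-genus surfaces forces adjacent stable and unstable branches to intersect. Since the genus of the Birkhoff sections from \cite{CM2} is unknown, the paper constructs (Theorem~\ref{TC}, via Birkhoff's annuli and the curve shortening flow) a \emph{complete system} of surfaces of section of genus $\le 1$ for the geodesic flow, and then must handle boundary orbits of the system (rotating versus non-rotating), the extension of the return map to the boundary, and the orbits trapped in $W^{s,u}(\Kfix)$. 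None of these steps is visible in your outline, and the ``delicate combination of surface-dynamics tools'' you defer to in your last paragraph is exactly the content you would need to supply; as written, the proposal does not constitute a proof and its central reduction (to a local disk) would have to be abandoned rather than repaired.
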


The importance of finding homoclinic orbits is that 
in any neighborhood of the 
\linebreak
homoclinic orbit one finds
a horseshoe with complicated dynamics. This dynamics can be 
coded using symbolic dynamics and implies positive (local)
topological entropy, infinitely many periodic orbits shadowing
the homoclinic, infinitely many homoclinics and 
\linebreak
exponential growth
of periodic orbits in a neighborhood of the homoclinic. 
Homoclinics prevent integrability \cite[\S III.6]{Moser},
 and can also be used to obtain Birkhoff sections
 \cite{CM1}. They are also the basic skeleton for Mather 
acceleration theorems in Arnold diffusion 
\cite{Mat15}, \cite{CY1}, \cite{DLS1}, \cite{GLS}.

Also theorem~\ref{TA} may help to prove 
that the closed orbits for the geodesic flow of surfaces are generically dense
in the phase space. A conjecture by Poincar\'e~\cite[vol. I, p.82 \S 36]{PoincareI}
stated  for the three body problem.
By now it is only known that their projection to the surface 
is generically dense,  Irie~\cite{irie1}.

It is well known that $C^2$, $r\ge 2$,
generic riemannian surfaces of genus $g\ge 2$ have homoclinic orbits,
see e.g.~\cite{CP2}.
Contreras and Paternain~\cite{CP2} proved that $C^2$ generic metrics 
on $\SS^2$ or $\re\P^2$ have some orbits with homoclinic orbits.
Knieper and Weiss~\cite{KW2} extended this result to the $C^\infty$ topology
and Clarke~\cite{aclarke1} proved it for analytic convex surfaces in $\re^3$ 
and the $C^\om$ topology. 
Xia and Zhang~\cite{XiaZhang} prove that for a $C^\infty$ generic metric
of positive curvature in $\SS^2$, every hyperbolic periodic orbit has homoclinics.
Contreras~\cite{geod} proves that $C^2$ generic metrics on any closed manifold
have homoclinics.

The $C^\infty$, $C^\om$ results \cite{KW2}, \cite{aclarke1}, \cite{XiaZhang} in the sphere
use the annular Birkhoff section~\cite[\S VI.10, p.180]{Birk2} for the spheres with positive curvature. 
Then they apply the techniques of 
Pixton~\cite{pixton} and Oliveira~\cite{Ol3}  
for area preserving maps on surfaces of genus 0 to obtain the homoclinics.
These techniques extend to genus 1 but not to higher genus.
The problem with riemannian surfaces which are not  spheres of positive curvature
is that they have Birkhoff sections in the Kupka-Smale case~\cite{CM2}, \cite{CKMS}
but their genera is not known. 

Instead we construct what we call a complete system 
of surfaces of section of genera $g\le 1$. With this  we complete
a program initiated by Birkhoff in~\cite[\S 28, p.\,281]{Birk3}
with formal justifications using the curve shortening flow \cite{Grayson}, \cite{gage}.
But now the Poincar\'e maps to these surfaces of section
are not continuous. They are essentially discontinuous\footnote{There are 
arbitrarily small curves whose image under the Poincar\'e map have infinite length
and large diameter.}. And the standard (continuity) arguments of Mather~\cite{Mat9}
and Oliveira~\cite{Ol3} for area preserving homeomorphisms can not be applied.
We show how to take advantage of the discontinuities of the Poincar\'e map to obtain
homoclinic orbits for certain closed orbits. For the remaining hyperbolic orbits we develop
in \cite{OC2} and \cite{OC1} the theories of Mather and Oliveira for partially defined area preserving
homeomorphisms so that they can be applied to our situation.

We also remark that in~\cite{OC1} we show that the usual hypothesis of Moser stability for 
elliptic periodic points in Mather~\cite{Mat9} is not needed. We use instead Theorem~1.2.(4)
from~\cite{OC1} which allows to use only condition~\eqref{ks1} from our Kupka-Smale definition.
\linebreak
Nevertheless, as observed by Xia and Zhang~\cite{XiaZhang}, Fayad and Krikorian~\cite{fakri}
prove that elliptic periodic points are Moser stable if their Floquet multipliers are diophantine,
which is a generic condition for geodesic flows by the Bumpy Metric Theorem.  

The Kupka-Smale condition has been chosen in order to have a unified approach using 
the results from~\cite{CM2}, \cite{OC2}, \cite{OC1}. But the transversality condition~\eqref{ks2}
can be relaxed to asking~\eqref{ks2} only for periodic orbits of small period, in order to 
obtain a Birkhoff section~\cite{CM2}; and a no heteroclinic connections\footnote{A heteroclinic 
connection is the case in which two components of $W^s(\dga)\setminus\dga$ and $W^u(\deta)\setminus\deta$
are equal.}
 condition instead of the transversality~\eqref{ks2}.
 Moreover, since the theorems that we use from~\cite{OC1} on homoclinic points are about fixed points;
 in order to get an homoclinic orbit for an orbit $\dga$ we only need to ask for such generic conditions 
 on periodic orbits of smaller period than $\dga$. We shall not pursue such refinements here.

For an elliptic periodic orbit with Floquet multipliers $\si$ satisfying
$\si^k\ne 1$ for
$1\le k\le 4$, its Poincar\'e map on a 
local transversal section can be written in Birkhoff normal form as
$$
P(z) = z e^{i(\om+\be |z|^2)}+R(z),
$$
with $\om,\be\in\re$ and $R(z)$ with zero 4-jet at $z=0$.
The condition $\be\ne 0$ is residual for 4-jets of $P$.
By theorem~2.5 in~\cite{CP2} this condition on all elliptic orbits 
is residual for $C^r$  riemannian metrics with the $C^r$ topology, $r\ge 5$.
The condition $\be\ne 0$ implies that the Poincar\'e map $P$ is locally
a twist map. Kupka-Smale twist maps have hyperbolic minimizing orbits
with homoclinics for every rational  rotation number in an interval  $[\om,\om+\e[$
or $]\om-\e,\om]$, depending on the sign of $\be$, see \cite{LC2}, \cite{Zehnder1},
\cite{genecand}.
These periodic orbits accumulate on the fixed point $z=0$.
Therefore for $C^r$, $r\ge 5$, generic riemannian metrics on closed surfaces every
closed geodesic is accumulated by homoclinic orbits,
and the closure of the periodic orbits is the same as the closure of 
the homoclinic orbits.

The proof of theorem~\ref{TA} needs 
results in 
dynamics of area preserving maps, Reeb flows and geodesic flows.

A {\it contact 3-manifold} is a pair $(N,\la)$ where $N$ is a closed 3-manifold
and $\la$ is a 1-form in $N$ such that $\la\wedge d\la$ is a volume form.
The {\it Reeb vector field} $X$ of $(N,\la)$ is defined by
$i_X d\la\equiv 0$ and $\la(X)\equiv 1$.
The {\it Reeb flow} $\psi_t:N\to N$ of $(N,\la)$ is the flow of $X$. 
The {\it Liouville form} of a riemannian surface $(M,\rho)$, given by
$$
\la(v)(\xi) :=\rho(v,d\pi(\xi)), \qquad v\in TM, \quad \xi\in T_vTM,
$$
is a contact form on $SM$.
The Reeb flow of $(SM,\la)$ is the geodesic flow of $(M,\rho)$.

A {\it surface of section} for the Reeb flow $\psi_t$ is a compact immersed surface
with boundary $\Si\subset N$, whose interior is embedded and transversal
to the Reeb vector field and whose boundary is a cover of a finite union 
of closed orbits of $\psi_t$.

A {\it Birkhoff section} is a connected inmersed surface $\Si\subset N$
whose interior is embedded and  transversal to the vector field. Its boundary
is a cover of finitely many closed orbits and there is $\ell>0$ such that 
for all $z\in N$, $\psi_{]0,\ell[}(z)\cap \Si\ne \emptyset$ and 
$\psi_{]-\ell,0[}(z)\cap \Si\ne \emptyset$.

Contreras and  Mazzucchelli proved in~\cite[Thm. A]{CM2} that every Kupka-Smale
Reeb flow on a closed contact 3-manifold $(N,\la)$ has a Birkhoff section. The first return map
of the interior of a Birkhoff section is a diffeomorphism which preserves 
the area form $d\la$. 

In order to use the results in \cite{OC2}, \cite{OC1} to obtain homoclinic orbits
we need to have area preserving maps defined on surfaces of genus 0 or 1.
In higher genus, the time one map of an area preserving flow without 
heteroclinic connections is an example of a Kupka-Smale map without 
homoclinics.

In general we don't know the genus of the Birkhoff sections obtained in \cite{CM2} or 
\cite{CKMS}. Instead we use a {\it complete system} of surfaces of section
with genus 0 or 1, (definition~\ref{dcsss}).
 This is a finite collection of surfaces of section which intersect
every orbit and such that  the points which do not return to the collection of surfaces are 
in the stable or unstable manifold of a finite set of hyperbolic closed orbits $\Kfix$,
called {\it non rotating boundary orbits}, 
which are some of the  boundaries of the surfaces of section of the system.
The other closed orbits in the boundaries of the sections
 are called {\it rotating boundary orbits}, their union is denoted $\Krot$.
 They have the property that there is a neighborhood of $\Krot$ where the 
 return times to the system of sections is uniformly bounded.

 If $\ga$ is a hyperbolic orbit of a Reeb flow in a 3-manifold we call {\it separatrices}
the connected components of $W^s(\ga)\setminus \ga$ and of $W^u(\ga)\setminus\ga$.
Since the contact manifold $(N,\la)$ is orientable,
they separate any small tubular neighborhood $U$ of $\ga$ into 2 or 4 connected 
components.
The germs of these components obtained by shrinking 
$U$ are called {\it sectors} of $\ga$. We say that a separatrix {\it accumulates on 
a sector} if it intersects such sector for any tubular neighborhood $U$.
A separatrix  is {\it adjacent to a sector} if both the closure of the sector 
and the separatrix contain a component of
a local invariant manifold $W^{s,u}_\e(\ga)$.
$$
W^{s,u}_\e(\ga)=\textstyle\bigcup_{z\in\ga}W^{s,u}_\e(z),\qquad
W^{s,u}_{\e}(z)=\{w\in W^{s,u}(z): \forall t >0 \;\; d(\psi_{\pm t}(w),\psi_{\pm t}(z))\le \e\,\}.
$$

A {\it Kupka-Smale} contact manifold  means that its Reeb flow satifies
\eqref{ks1} and \eqref{ks2}.

\begin{MainThm}\quad\label{TB}

Let $(N,\la)$ be a Kupka-Smale closed contact 3-manifold. 
\begin{enumerate}
\item \label{B1}
For any hyperbolic closed orbit $\ga$ of $(N,\la)$, all the connected components
         of 
         \linebreak
         $W^s(\ga)\setminus\ga$ and $W^u(\ga)\setminus\ga$ have the same closure
         equal to $\ov W^s(\ga)=\ov W^u(\ga)$.
         
         Moreover, each separatrix of $\ga$ accumulates on both of its adjacent sectors.
        
\item\label{B2}
 If $(N,\la)$ has a Birkhoff section $\Si$ of genus 0 or 1, then every hyperbolic orbit intersecting the
 interior of $\Si$ has homoclinics in all its seperatrices.
 
           A hyperbolic boundary orbit in $\partial \Si$ has homoclinics in all its separatrices
          provided that $\Si$ has genus 0 or if $\Si$ has genus 1 and the union of 
           its local separatrices 
          intersect $\Si$ in at least 4 curves.

\item\label{B3}
  Suppose that  $(N,\la)$ admits a complete system of surfaces of section.
  Then: 
  
  Every non rotating boundary orbit in $\Kfix$ has homoclinics in all its separatrices.
  
  If the system contains a component $S$ of genus $0$ or $1$, then 
  every periodic orbit which intersects the interior of $S$ 
  has homoclinics in all its separatrices.
          
          A hyperbolic rotating boundary orbit in $\Krot\cap\partial S$ 
          has homoclinics in all its separatrices
          provided that $S$ has genus 0 or if $S$ has genus 1 and the
           union of its local separatrices 
          intersect $S$ in at least 4 curves. 
           \end{enumerate}
\end{MainThm}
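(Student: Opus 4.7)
The overall plan is to transfer the problem from three--dimensional Reeb dynamics to the two--dimensional dynamics of an area--preserving map on a surface of section, and then to invoke the results of \cite{OC1} and \cite{OC2} on homoclinics for hyperbolic periodic points of (possibly only partially defined) area--preserving maps on surfaces of genus $0$ or $1$. I would prove \eqref{B2} and \eqref{B3} first, and deduce the topological statement \eqref{B1} afterwards via a $\la$--lemma argument.

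For \eqref{B2} I fix a Birkhoff section $\Si$ of genus $g\le 1$ and consider the first--return map $P\colon\interior\Si\to\interior\Si$, which by \cite{CM2} is a diffeomorphism preserving the area form $d\la$. A hyperbolic closed orbit $\ga$ meeting $\interior\Si$ corresponds to a hyperbolic periodic point $p$ of $P$, and the branches of $W^{s,u}(p)$ match the separatrices of $\ga$ in $N$ under the standard identification. Since $\Si$ has genus at most $1$ and $(N,\la)$ is Kupka--Smale, the hypotheses of the homoclinic theorem for area preserving diffeomorphisms on surfaces of genus $0$ or $1$ proved in \cite{OC1} are met, and this yields a homoclinic point on each branch of $p$. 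For a hyperbolic closed orbit on $\partial\Si$ the associated fixed point lies on $\partial\Si$; when $\Si$ is a disk the same theorem applies directly, while for genus $1$ the extra hypothesis that the local separatrices meet $\Si$ in at least $4$ arcs is precisely the combinatorial input needed in \cite{OC1} to run the Mather--Oliveira--Pixton argument at a boundary fixed point on an annular or punctured--torus section.

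Part \eqref{B3} proceeds along similar lines, with the additional subtlety that the first--return map to the disjoint union of components of the complete system is only partially defined: the non--returning set is exactly $W^s(\Kfix)\cup W^u(\Kfix)$. By the framework of \cite{OC2} this return map extends to a partially defined area--preserving homeomorphism whose discontinuities are carried by arcs of $W^{s,u}(\Kfix)$. Hyperbolic orbits meeting the interior of a genus--$\le 1$ component $S$ are then treated exactly as in \eqref{B2}, using the partially defined version of the homoclinic theorem from \cite{OC1}. For a rotating boundary orbit in $\Krot\cap\partial S$ the return times are locally uniformly bounded, so the Poincar\'e map is continuous near such an orbit and the argument reduces to the Birkhoff--section case under the same genus--$1$ combinatorial condition. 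For a non--rotating boundary orbit $\ga\subset\Kfix$ I instead exploit the discontinuities themselves: arbitrarily small arcs whose images under the extended map have large diameter provide the crossings that the partial--map Mather--Oliveira machinery of \cite{OC1} converts into a homoclinic point on each separatrix of $\ga$.

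Finally, \eqref{B1} follows from \eqref{B2} and \eqref{B3}. Once a homoclinic point $p$ on some separatrix of $\ga$ is available, the $\la$--lemma applied to the flow shows that any local stable disk based at a point of $\ga$ is $C^0$--accumulated by forward iterates of a small unstable disk through $p$; iterating symmetrically, and using the orientability of $N$ provided by the contact form together with the fact that the separatrices locally split a tubular neighborhood of $\ga$ into two or four sectors, one concludes that every separatrix accumulates on both of its adjacent sectors, which yields $\ov{W^s(\ga)}=\ov{W^u(\ga)}$ and the equality of the closures of all connected components of $W^{s,u}(\ga)\setminus\ga$. The main obstacle throughout is the treatment of the boundary orbits, and especially the non--rotating ones: the return map is genuinely discontinuous there, so the continuity--based arguments of \cite{Mat9} and \cite{Ol3} cannot be applied, and one must rely on the partial--map framework of \cite{OC1}, \cite{OC2}, which is specifically designed to convert these discontinuities from an obstruction into a source of homoclinic crossings.
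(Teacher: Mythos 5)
Your proposal has the right general architecture (reduce to an area-preserving return map and invoke \cite{OC1}, \cite{OC2}), but it contains two genuine gaps. First, the logical order is inverted. Item~\eqref{B1} is stated for \emph{every} Kupka--Smale closed contact 3-manifold, with no hypothesis on the genus of any section, so it cannot be deduced from items~\eqref{B2} and~\eqref{B3}, which carry genus-$\le 1$ hypotheses. The paper proves~\eqref{B1} \emph{first}: by \cite{CM2} a Kupka--Smale Reeb flow always admits some Birkhoff section (of unknown genus), and the first, genus-free parts of theorem~\ref{OC1} (auto-accumulation of branches and equality of closures) already give~\eqref{B1}. Moreover,~\eqref{B1} is then used as an input throughout the proof of~\eqref{B3} (in lemma~\ref{L15}, proposition~\ref{p17} and proposition~\ref{PQW}), so deriving it last would make the argument circular. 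Your closing $\la$-lemma argument also only works if a homoclinic is already known in every separatrix, which is exactly what is being proved.

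Second, your treatment of the non-rotating boundary orbits $\Kfix$ is a placeholder rather than an argument: ``exploiting the discontinuities'' does not identify a mechanism that produces a homoclinic, and note that this part of~\eqref{B3} holds with \emph{no} genus restriction, so no version of the genus-$\le 1$ machinery can be responsible for it. The paper's actual mechanism is: (a) an area/Stokes argument (lemma~\ref{larrow}) showing that each separatrix of $\ga\in\Kfix$ must meet $W^{u,s}(\xi)$ for some $\xi\in\Kfix$, since otherwise one produces infinitely many disjoint essential circles of fixed positive $d\la$-length inside a section of finite area; (b) a pigeonhole/chain argument in the finite set $\Kfix$ producing some $\be\in\Kfix$ with $(\a,\be,\be)\in\Ga$, i.e.\ with a transversal homoclinic; (c) lemma~\ref{L15}, a small-disk area argument plus the $\la$-lemma, transferring homoclinics from $\be$ back to every separatrix of $\a$. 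Relatedly, for the interior and rotating orbits in~\eqref{B3} you assert that the partially defined theorem of \cite{OC1} applies ``exactly as in~\eqref{B2}'', but theorem~\ref{toc1} requires the closure of each branch to lie in the domain $S_0$ of the partial return map; establishing this forces the dichotomy of the paper (either $\ov{W^{s,u}(\ga)}$ meets $\W=W^{s,u}(\Kfix)\setminus\Kfix$, handled by proposition~\ref{PQW}, or the return times are bounded on each branch, which is proved via lemma~\ref{Lnti} and proposition~\ref{lma}). Without these steps the reduction to \cite{OC1} does not go through.
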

See also proposition~\ref{PQW} which has no genus restriction.

Observe that the condition of four intersections is satisfied if the hyperbolic boundary orbit
has positive Floquet multipliers. Because in that case the separatrices divide a tubular 
neighborhood of the orbit into four sectors and the trace of the Birkhoff section must turn
around the four sectors.

Recall that Hofer, Wysocki and Zehnder prove in~\cite{HWZ1} corollary~1.8, 
that any 
non degenerate tight contact form on the 3-sphere $\SS^3$  
admits a finite energy foliation whose leaves have genus 0.
The rigid surfaces of the finite energy foliation form a complete system
of surfaces of section.
We check in \S\ref{transvb} that the transversality condition in item~\eqref{css3}
of definition~\ref{dcsss} holds. Therefore we get

\begin{Corollary}\label{Ctight}\quad

 Any Kupka-Smale tight contact form on $\SS^3$ has
homoclinic orbits in all branches of all of its hyperbolic closed orbits.
\end{Corollary}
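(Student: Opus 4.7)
The plan is to deduce Corollary~\ref{Ctight} from Theorem~\ref{TB}(\ref{B3}) by exhibiting a complete system of surfaces of section of genus $0$ for any Kupka-Smale tight contact form on $\SS^3$, so that the genus-$0$-or-$1$ hypothesis of the theorem is automatically satisfied and its conclusions directly give the desired homoclinics.

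First, I would observe that the Kupka-Smale hypothesis implies that every closed Reeb orbit is non-degenerate, because condition~\eqref{ks1} forbids the eigenvalue $1$ among the Floquet multipliers, so the linearized return map minus the identity is invertible. Therefore the theorem of Hofer, Wysocki and Zehnder~\cite{HWZ1}, Corollary~1.8 applies and produces a stable finite energy foliation of $\SS^3$ whose leaves have genus $0$, with finitely many rigid leaves asymptotic to a distinguished finite collection of closed Reeb orbits (the binding). The non-rigid leaves of the foliation sweep out the complement of the binding and ensure uniformly bounded return times on compact sets disjoint from it.

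Second, I would extract the rigid leaves from this foliation and check that they form a complete system of surfaces of section in the sense of Definition~\ref{dcsss}. Each rigid leaf is an embedded genus $0$ surface, transverse to the Reeb vector field in its interior, with boundary on finitely many binding orbits; the orbits that fail to return to the system are asymptotic to a finite set of hyperbolic closed orbits, which are precisely the binding orbits. These split naturally into the non-rotating ones, assigned to $\Kfix$, whose nearby dynamics is controlled by their stable and unstable manifolds, and the rotating ones, assigned to $\Krot$, for which the bounded return time property in a tubular neighborhood follows from the local model of the HWZ foliation. Condition~\eqref{css3} of Definition~\ref{dcsss}, which requires suitable transversality between invariant manifolds and the sections, is checked separately in \S\ref{transvb}. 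Once all items of Definition~\ref{dcsss} are in place, Theorem~\ref{TB}(\ref{B3}) applies verbatim: every hyperbolic orbit intersecting the interior of a rigid leaf has homoclinics in all of its separatrices (the genus is $0$); every non-rotating boundary orbit in $\Kfix$ has homoclinics in all of its separatrices; and every rotating boundary orbit in $\Krot$ has homoclinics in all of its separatrices, because the containing leaf has genus $0$, which makes the additional ``four curves'' clause of Theorem~\ref{TB}(\ref{B3}) unnecessary. Since every closed Reeb orbit is either a binding orbit or meets the interior of some rigid leaf, this exhausts all hyperbolic closed orbits.

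The main obstacle is the transversality verification carried out in \S\ref{transvb}. The HWZ foliation is built from pseudoholomorphic curves, and transversality of the rigid leaves with the stable and unstable manifolds of the hyperbolic binding orbits is not automatic; it has to be established from the asymptotic behavior of finite energy pseudoholomorphic curves at their punctures together with the genericity supplied by the Kupka-Smale hypothesis~\eqref{ks2}. A secondary point worth confirming is that the partition of the binding into $\Kfix$ and $\Krot$ is well defined for the HWZ foliation and that the uniform return-time property around $\Krot$ does follow from the foliation structure; both should be straightforward from the standard local description of the foliation near its binding orbits.
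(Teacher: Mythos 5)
Your proposal is correct and follows essentially the same route as the paper: Kupka--Smale implies nondegeneracy, so the rigid surfaces of the Hofer--Wysocki--Zehnder genus-zero finite energy foliation form a complete system of surfaces of section, the transversality condition of Definition~\ref{dcsss}.\eqref{css3} is verified in \S\ref{transvb}, and Theorem~\ref{TB}.\eqref{B3} then yields homoclinics in all branches. The only small imprecision is that the verification in \S\ref{transvb} rests on the asymptotic eigenvalue equation for pseudoholomorphic curves (the negative asymptotic eigenvalue $\mu<0$ forces the leaf to rotate more slowly than the linearized flow), not on the heteroclinic transversality condition~\eqref{ks2} of the Kupka--Smale hypothesis.
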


Since a homoclinic orbit implies the existence of a horseshoe we also obtain

\begin{Corollary}\quad

If a Kupka-Smale tight contact form on $\SS^3$ contains a hyperbolic periodic orbit
then it has infinitely many periodic orbits.
\end{Corollary}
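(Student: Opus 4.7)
The plan is to derive this corollary directly from the preceding Corollary~\ref{Ctight} by invoking the classical Smale--Birkhoff homoclinic theorem. Let $\gamma$ denote the given hyperbolic closed Reeb orbit on $\SS^3$. By Corollary~\ref{Ctight}, each branch of $W^s(\gamma)\setminus\gamma$ and $W^u(\gamma)\setminus\gamma$ contains a homoclinic orbit; in particular there is a homoclinic point $p\in(W^s(\gamma)\cap W^u(\gamma))\setminus\gamma$.

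Next I would use the Kupka--Smale hypothesis to upgrade this to a \emph{transverse} homoclinic point. Condition \eqref{ks2} in the definition of Kupka--Smale, applied to the pair $(\gamma,\gamma)$, asserts that $W^s(\gamma)\pitchfork W^u(\gamma)$; hence the intersection at $p$ is transverse inside the three-manifold $N=\SS^3$. Passing to a small embedded disk $D$ through a point of $\gamma$ transverse to the Reeb vector field, $p$ (or one of its images under the flow) becomes a transverse homoclinic point of the area-preserving Poincar\'e return map $P\colon \mathrm{dom}(P)\subset D\to D$ at the hyperbolic fixed point $\gamma\cap D$.

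The Smale--Birkhoff homoclinic theorem then yields, in any neighborhood of the closure of the homoclinic orbit, a compact $P^N$-invariant hyperbolic set $\Lambda$ (for some large $N$) on which $P^N|_\Lambda$ is topologically conjugate to the full shift on a finite alphabet. This shift possesses periodic orbits of every period, producing infinitely many distinct periodic points of $P$ and hence infinitely many distinct closed orbits of the Reeb flow. Since periodic points of arbitrarily large shift-period correspond to Reeb orbits of arbitrarily large minimal period, the resulting family is genuinely infinite.

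There is essentially no obstacle here once Corollary~\ref{Ctight} is granted: the only point that requires a brief verification is that transversality of the invariant manifolds of $\gamma$ in the ambient $3$-manifold passes to transversality of the corresponding stable and unstable curves of the Poincar\'e map on the two-dimensional section, which is immediate because the Reeb direction is complementary to $D$ and tangent to both $W^s(\gamma)$ and $W^u(\gamma)$ along $\gamma$.
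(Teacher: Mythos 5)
Your argument is correct and is exactly the route the paper takes: the paper derives this corollary in one line from Corollary~\ref{Ctight} via the remark that a (transverse, by Kupka--Smale) homoclinic orbit forces a horseshoe, hence infinitely many periodic orbits. Your write-up merely spells out the standard Smale--Birkhoff details, including the harmless verification that ambient transversality of $W^s(\gamma)$ and $W^u(\gamma)$ descends to transversality on a local transverse disk.
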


The geodesic flow is the Reeb flow of the Liouville form in the unit tangent bundle.
 By lifting the geodesic flow to a double covering if necessary, in order to obtain homoclinic 
 orbits for geodesic flows it is enough to consider orientable surfaces. 
 Theorem~\ref{TA} follows from theorem~\ref{TB} and the following 
 theorem~\ref{TC} once the conditions on the rotating boundary orbits in $\Krot$
 in item~\eqref{B3} of theorem~\ref{TB} are checked.
 
 \pagebreak
 
\begin{Theorem}[Contreras, Knieper, Mazzucchelli, 
Schulz~{\cite[Thm. E]{CKMS}}]\quad\label{TC}

Let $(M,\rho)$ be a closed connected orientable surface all of  whose
simple contractible closed orbits without conjugate points are non degenerate.
Then there is a complete system of surfaces of section for the 
geodesic flow of $(M,\rho)$ whose components have genus 0 or 1.
\end{Theorem}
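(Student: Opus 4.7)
The plan is to carry out Birkhoff's program from \cite{Birk3}: decompose $(M,\rho)$ by a finite family of embedded simple closed geodesics, erect a Birkhoff annular section over each, and take the union as the candidate complete system. In the higher-genus case I would choose, within a finite collection of free homotopy classes of essential simple loops, the length-minimising representative; such a representative is automatically simple, closed, and free of conjugate points, since a conjugate pair would permit a length-decreasing perturbation. The curve shortening flow of Grayson and Gage \cite{Grayson,gage} supplies these geodesics constructively. For the sphere the correct tool is a minimax construction over embedded loops, and this is precisely where the non-degeneracy hypothesis enters: a minimax geodesic without conjugate points is guaranteed to be hyperbolic and therefore to support a well-defined Birkhoff annulus. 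The outcome is a finite disjoint family $c_1,\dots,c_k$ cutting $M$ into pieces of simple topology whose frontier is designed to contain every closed geodesic with the potential to trap orbits.

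For each $c_i$ I would erect the Birkhoff annulus $A_i\subset SM$ of unit vectors based on $c_i$ pointing into a chosen side, with boundary $\dot c_i\cup(-\dot c_i)$. Absence of conjugate points on $c_i$ implies, via a Jacobi field estimate, that a geodesic leaving $c_i$ at a small angle is forced to cross $c_i$ transversally again in uniformly bounded time, so the first return map to $A_i$ is defined on an open dense subset. The boundary $\partial A_i$ is then split into $\Krot$ and $\Kfix$ according to whether the return time remains bounded in a neighborhood of $\pm\dot c_i$ or blows up, and the transversality condition in the definition of a complete system is verified by a direct computation using the geodesic equation perpendicular to $c_i$.

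The essential content is the covering property: every orbit that misses $\bigcup_i A_i$ in forward time must lie in the stable manifold of some orbit in $\Kfix$. Indeed, if a forward orbit $\gamma$ avoids all the $A_i$ for $t\ge 0$, its projection to $M$ is trapped in one connected component $\Om$ of $M\setminus\bigcup c_i$, so by compactness the $\om$-limit of $\gamma$ contains a closed geodesic $\de\subset\overline\Om$; the initial choice of the $c_i$ is arranged so that any such $\de$ already belongs to $\bigcup c_i$, giving $\gamma\in W^s(\dot\de)\subset W^s(\Kfix)$, and a symmetric argument in backward time places the remaining exceptional orbits in $W^u(\Kfix)$. The genus bound is automatic on individual components, since each $A_i$ is a topological annulus of genus $0$, and the only way a component of the system can acquire positive genus is by identifying two annuli along a common rotating boundary, producing a torus with boundary of genus $1$. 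I expect the principal obstacle to be the initial selection of the family $c_1,\dots,c_k$, which must be rich enough that no invariant set sits in $M\setminus\bigcup c_i$ unseen by any $A_i$; this is exactly where the "no conjugate points" hypothesis is indispensable, since a simple closed geodesic with conjugate points can focus nearby geodesics away from its Birkhoff annulus and prevent the return map from being defined up to the boundary — precisely the pathology excluded by the non-degeneracy hypothesis in the statement.
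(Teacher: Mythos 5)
Your outline assumes away the two steps that constitute the actual proof. First, the selection of the geodesics: you posit that the family $c_1,\dots,c_k$ can be ``arranged so that any closed geodesic $\de$ in a complementary region already belongs to $\bigcup_i c_i$,'' but no finite family has this property in general (a complementary region may contain infinitely many closed geodesics), and your fallback claim that the $\om$-limit of a trapped orbit ``contains a closed geodesic by compactness'' is false for flows. The paper instead runs an iterative decomposition of each complementary disk into \emph{corsets} and \emph{bowls} (lemma~\ref{lct}): each time some orbit remains trapped, the curve shortening flow (lemma~\ref{L41}) produces a new waist and a minimax geodesic with conjugate points splitting the region further; the process terminates by proposition~\ref{pfin}, whose proof is exactly where the non-degeneracy hypothesis on waists enters. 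The termination condition is that \emph{no} invariant set projects into the open complement, which is what forces the $\om$-limit of a trapped orbit to project into the finite curve system and ultimately onto one of the waists. Second, a \emph{disjoint} family of simple closed geodesics cannot do the job: disjoint curves do not cut a surface of genus $g\ge 1$ into disks, their Birkhoff annuli are disjoint genus-$0$ pieces (so no genus-$1$ component ever appears by ``identifying two annuli along a common boundary''), and closed geodesics inside a complementary piece would never meet the system, violating condition~\eqref{css4} of definition~\ref{dcsss}. The paper uses $2g+2$ mutually \emph{intersecting} minimizers and resolves the intersections of their $4g+4$ Birkhoff annuli by Fried surgery (figures~\ref{Fried1}--\ref{Fried3}) to produce the two embedded genus-$1$ sections of \S\ref{genus1}.

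You have also inverted the role of conjugate points. A simple closed geodesic \emph{with} conjugate points forces all nearby geodesics to cross it in uniformly bounded time (lemma~\ref{ll45}, corollary~\ref{cco}); these are precisely the \emph{rotating} boundary orbits in $\Krot$, where the return map extends to the boundary. A waist (no conjugate points) is hyperbolic with invariant subspaces transverse to the vertical, so nearby geodesics on its stable manifold never recross it: these are the \emph{non-rotating} orbits in $\Kfix$ with infinite return time. Your claim that absence of conjugate points forces bounded-time recrossing, and that conjugate points are ``the pathology excluded by the non-degeneracy hypothesis,'' is backwards on both counts; the hypothesis is a non-degeneracy assumption on the geodesics \emph{without} conjugate points, needed to make them hyperbolic and to make the corset/bowl decomposition finite.
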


The ideas in theorem~\ref{TC} date back to Birkhoff~\cite{Birk3} section~28, 
together with the modern version of the curve shortening lemma by 
Grayson~\cite{Grayson}.
We also provide a proof theorem~\ref{TC} in section~\S\ref{sbb}, theorem~\ref{TCBA}
with a different construction.
And we check the conditions on the rotating boundary orbits
in item~\eqref{B3} of theorem~\ref{TB}.
In our case the system has two embedded surfaces of section of genus 1
and finitely many Birkhoff annuli of disjoint simple closed geodesics.
This proves theorem~\ref{TA}.

For area preserving maps the auto accumulation of invariant manifolds as
in item~\eqref{B1} of theorem~\ref{TB} usually  requires the Kupka-Smale 
condition and also the condition that elliptic periodic orbits are Moser stable.
This is a fundamental step to obtain homoclinics. 
Instead, using our results in~\cite{OC1}, we only use the non-degeneracy
condition~\eqref{ks1} from our Kupka-Smale definition.
In our application the first return map to the complete system of sections
is not globally defined. Special care has been taken in \cite{OC2}, \cite{OC1}
to deal with this case.

In section~\ref{ss1} we prove theorem~\ref{TB} using our results in 
area preserving maps from~\cite{OC2}, \cite{OC1}.
In section~\ref{bdrymap} we show that the return map in a neighborhood
of $\Krot$ extends to the boundary and that the extension of hyperbolic  
rotating boundary orbits give rise to saddle periodic orbits for the return map.
In section~\ref{sbb} we give a proof of theorem~\ref{TC}
 adapted to our application.

\section{Proof of Theorem B.}

\subsection{Auto-accumulation of invariant manifolds.}\label{ss1}

Proof of item~\eqref{B1}.

\begin{Theorem}[Contreras, Mazzuchelli \cite{CM2} Thm. A]\quad\label{CM2}

Any closed contact 3-manifold satisfying the Kupka-Smale 
condition admits a Birkhoff section for its Reeb flow.
\end{Theorem}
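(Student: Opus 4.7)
The plan is to deduce the existence of a Birkhoff section from the recent broken book decomposition of Colin--Dehornoy--Rechtman, combined with the Kupka--Smale hypotheses to control the orbits that fail to return to the pages. Recall that the CDR construction, which applies under condition \eqref{ks1} (so that all periodic orbits are non-degenerate), produces a finite collection of closed Reeb orbits (the \emph{binding}, partitioned into radial/elliptic and hyperbolic components) together with a finite family of immersed pages $\Sigma_1,\dots,\Sigma_k$ whose interiors are embedded and transversal to the Reeb vector field $X$, with $\partial \Sigma_i$ mapping into the binding. On the union of the \emph{radial} pages, the return time is uniformly bounded; the obstruction to having a Birkhoff section is that some orbits may never intersect $\bigcup_i \Sigma_i$, and these orbits must be trapped in the invariant manifolds of the hyperbolic binding orbits.

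First I would analyze the set $T$ of orbits that avoid $\bigcup_i \interior \Sigma_i$ in forward (resp. backward) time. A compactness argument, together with non-degeneracy of the binding orbits and local transversality of the pages, shows that the $\omega$-limit set of any $z \in T$ is contained in the binding; since radial bindings are locally recurrent to radial pages, the limit must lie in the union $\Kfix$ of hyperbolic binding orbits. Thus every trapped orbit lies in $W^u(\Kfix) \cup W^s(\Kfix)$.

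Next I would use Kupka--Smale condition \eqref{ks2}: the stable and unstable manifolds of the hyperbolic binding orbits intersect transversally, and they intersect the pages transversally. Cut finite sub-surfaces $D^{s,u}$ of the invariant manifolds of the hyperbolic bindings, large enough to contain every piece of orbit lying in $T$ between two consecutive hits on the pages; transversality guarantees that these cut surfaces can be chosen to meet $\bigcup_i \Sigma_i$ in embedded arcs, so the union $\Sigma = \bigcup_i \Sigma_i \cup D^s \cup D^u$ is a transversal immersed surface with boundary in the binding. A $\lambda$-lemma argument near each hyperbolic binding orbit, using transversality of its local invariant manifolds with adjacent pages, then forces a uniform return time $\ell>0$ on the whole of $N$.

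The main obstacle will be the last step: showing that after enlarging the pages by pieces of invariant manifolds the return time actually becomes uniformly bounded, and that the enlarged surface remains a properly immersed surface with boundary a finite cover of periodic orbits. The $\lambda$-lemma guarantees only local accumulation near the hyperbolic orbits; one has to iterate it along the transverse heteroclinic web, and the possibility of the stable and unstable sheets accumulating on many other hyperbolic orbits means the cutting procedure must be performed simultaneously and consistently for the whole finite set $\Kfix$, with a global choice of transverse cutting disks respecting condition \eqref{ks2}. Connectedness of the final surface can then be arranged by a standard surgery (plumbing along heteroclinic arcs given by \eqref{ks2}) that does not alter the transversality to $X$ nor the boundary structure.
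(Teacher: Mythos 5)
The first half of your outline does track the actual strategy of \cite{CM2}: one starts from the Colin--Dehornoy--Rechtman broken book decomposition (available under condition~\eqref{ks1}), observes that the return time is uniformly bounded near the radial binding orbits, and shows that any orbit which never meets the pages in forward (resp.\ backward) time must lie in $W^s(\Kfix)$ (resp.\ $W^u(\Kfix)$), where $\Kfix$ is the set of hyperbolic (broken) binding orbits. The fatal problem is your final step. The sub-surfaces $D^{s,u}$ you propose to adjoin are contained in $W^{s,u}(\Kfix)$, which are invariant under the Reeb flow and hence \emph{tangent} to $X$ at every point; a surface of section must have interior transverse to $X$, so $\bigcup_i\Sigma_i\cup D^s\cup D^u$ is not a surface of section, and no amount of iteration of the $\lambda$-lemma along the heteroclinic web can repair this. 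Worse, even as a set the added pieces would not help: a point of $W^s(\gamma)\setminus\gamma$ converges to $\gamma$ and crosses nothing transversally in forward time, so the return time to any transverse surface disjoint from a neighborhood of these orbits remains infinite.

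The missing idea is the existence of \emph{transverse homoclinic orbits} for every hyperbolic binding orbit $\gamma\in\Kfix$. This is the heart of \cite{CM2}: using the area-preserving first return map to the pages together with the Kupka--Smale condition~\eqref{ks2}, one proves $\ov{W^s(\gamma)}=\ov{W^u(\gamma)}$ and then that every separatrix of $\gamma$ carries a homoclinic point (the arguments are of the type reproduced here in lemma~\ref{larrow} and proposition~\ref{p17}, resting on Stokes-type area estimates and the accumulation lemma, not on the $\lambda$-lemma alone). Once each $\gamma\in\Kfix$ has a transverse homoclinic, Section~3 of \cite{CM2} performs a Fried-type surgery: one adds to the broken book small surfaces built along the homoclinic loops which \emph{are} transverse to $X$, and these catch, in uniformly bounded time, all the orbits previously trapped in $W^{s,u}(\Kfix)$. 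Your proposal skips the homoclinic step entirely, and without it there is no known way to convert a broken book into a Birkhoff section.
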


\begin{Definition}\label{dhyp}
Let $S$ be a compact  orientable surface with boundary. 
Suppose that $f:S\to S$ is a orientation preserving homeomorphism.
We say that a periodic point $x\in\Fix(f^n)$ is {\it hyperbolic} or of {\it saddle type} if there is an open
neighborhood $V$ and a local chart $h:V\to W$ such that $W=]-1,1[^2$, if $p\in\interior( S)$
or $W=]-1,1[\times [0,1[$, if $p\in\partial S$, $h(p)=(0,0)$ and $h\circ f\circ h^{-1}=g$, where
$g(x,y)=(\la x, \la^{-1} y)$ with $\la\in\re$, $\la\notin\{-1,0,1\}$.
\end{Definition}

 In such coordinates the set $\{(x,y)\in V\,|\, x\ne 0\text{ and }y\ne 0\,\}$
  has two or  four connected components that contain $p$ in their closures.
  We call them {\it sectors of $p$}.
  If $\Si$ is one of these sectors and $\Si'$ is a sector of $p$
   defined by means of another coordinate neighborhood $V'$ of $p$ then
   either $\Si\cap\Si'=\emptyset$
   or $\Si$ and $\Si'$ define the same germ at $p$.
   We say that the set $A$ contains a sector $\Si$ if $A$ contains a set $\Si'$
   germ equivalent to $\Si$ at $p$.
   We say that a set $B$ {\it accumulates on a sector $\Si$ of $p$} 
   if the closure of $B\cap \Si$ contains $p$.
   These definitions do not depend on the choice of $V$
   neither on the choice of the linear map $(x,y)\mapsto (\la x,\la^{-1}y)$.

The stable and unstable manifolds of $p$ are
\begin{align*}
W^s(p,f)&=\{q\in S:\lim_{m\to+\infty}d(f^m(p),f^m(q))=0\,\},
\\
W^u(p,f)&=\{q\in S:\lim_{m\to-\infty}d(f^m(p),f^m(q))=0\,\}.
\end{align*}
The {\it branches} of $p$ are the connected components of $W^s(p,f)\setminus\{p\}$ or of $W^u(p,f)\setminus\{p\}$.
A {\it connection} between two periodic points $p,\,q\in S$ is a branch of $p$ which is also a branch of $q$,
i.e. a whole branch which is contained in $W^s(p,f)\cap W^u(q,f)$ or in $W^u(p,f)\cap W^s(p,f)$.
 We say that a branch $L$ and a sector $\Si$ are {\it adjacent}
   if a local branch of $L$ is contained in the closure of $\Si$ in $S$.
   Two branches are {\it adjacent} if they are adjacent to a single sector.
 
A periodic point $p\in\Fix(f^n)$ is {\it irrationally elliptic} if 
$f$ is $C^1$ in a neighborhood of $p$ and no eigenvalue of $df^n(p)$ is a root of unity.

Let $(N,\la)$ be a Kupka-Smale closed contact 3-manifold
and $\Si$ a Birkhoff section for its Reeb flow $\psi$. 
The first return times $\tau_\pm:\Si\to\re^+$ and 
the first return maps $f^{\pm 1}:\Si\to\Si$ to $\Si$ are defined by
\begin{align*}
\tau_\pm(x) :=\pm \min\{t>0:\psi_{\pm t}(x)\in \Si\},\qquad 
f^{\pm 1}(x) :=\psi_{\tau_\pm(x)}(x).
\end{align*}
We have that $f^{\pm 1}$ are smooth diffeomorphisms of $\Si$  
preserving the area form $d\la$ on $\Si$.
We are going to apply to $f$ the following Theorem:

\begin{Theorem}[Oliveira, Contreras~\cite{OC1} corollary~4.9]\label{OC1}
\quad

Let $S$ be a compact connected orientable surface with boundary 
provided with a finite measure $\mu$ 
which is positive on open sets and 
$f:S\to S$ be an orientation preserving and  area preserving
homeomorphism of $S$.
\begin{enumerate}
\item\label{c11}
Suppose that $L$ is a (periodic) 
branch of $f$ and that all periodic points of $f$ contained in $cl_SL$ 
are of saddle type or irrationally elliptic.
Then either $L$ is a connection or $L$ accumulates on
both adjacent sectors.
In the later alternative $L\subset \om(L)$.

\item\label{c12}
Let $p\in S-\partial S$ be a periodic point of $f$ of saddle type and let
$L_1$ and $L_2$ be adjacent branches of $p$ that are not connections.
If all the periodic points of $f$ contained in $cl_S(L_1\cup L_2)$ are of saddle
type or irrationally elliptic, then $cl_SL_1=cl_SL_2$.

\item\label{c13}
Suppose that $p\in S-\partial S$ is a periodic point of $f$ of saddle type.
Assume that all the periodic points contained in $cl_S(W^u_p\cup W^s_p)$ 
are of saddle type or irrationally elliptic and $p$ has no connections.
Then the branches of $p$ have the same closure
and each branch of $p$ accumulates on all the sectors of $p$.

If in addition $S$ has genus 0 or 1, then the four 
branches of $p$ have homoclinic points.

\item\label{c14}
Let $C$ be a connected component of $\partial S$ and suppose that   
all the periodic points $p_1,\ldots,p_{2n}$ of $f$ in $C$ are of saddle type.
Let $L_i$ be the branch of $p_i$ contained  in $S-\partial S$.
Assume that for every $i$ all the periodic points of $f$ contained 
in $cl_SL_i$ are of saddle type or irrationally elliptic and that $L_i$ is not a connection.
Then for every pair $(i,j)$  the branch $L_i$ accumulates on all the sectors  of $p_j$ and
 $cl_SL_i=cl_SL_j$.

If in addition $S$ has genus 0 then any pair $(L_i,L_j)$
of stable and unstable branches intersect. The same happens 
if the genus of $S$ is 1 provided that there are at least 4 periodic 
points in $C$.
\end{enumerate}
\end{Theorem}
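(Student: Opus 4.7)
My plan is to prove the four items in order, using Poincaré recurrence and the branch topology of hyperbolic saddles for the orientation and area preserving homeomorphism $f$. For item~\eqref{c11}, I would begin by choosing a fundamental domain $D\subset L$ for $f^n$ close to $p$ (with $n$ the period of $p$). The iterates $f^{kn}(D)$ are pairwise disjoint unless $L$ is a connection; area preservation gives $\sum_k\mu(f^{kn}(D))<\infty$, so they must accumulate somewhere in $cl_SL$. If $L$ failed to accumulate on one of its adjacent sectors $\Si$, the unreached region would contain an $f$-invariant open set inside whose closure every periodic point is of saddle or irrationally elliptic type; a Brouwer translation argument combined with area preservation would then force a fixed point of some iterate of $f$ in the interior of this set which cannot be of the admitted types, a contradiction. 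The relation $L\subset\om(L)$ then follows from Poincaré recurrence applied to points of $L$ itself.

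For item~\eqref{c12}, \eqref{c11} already gives that both $L_1$ and $L_2$ accumulate on the common sector $\Si$ between them, and that each is contained in its own $\om$-limit. Poincaré recurrence applied to points on $L_1$ near $p$ produces returning orbits that accumulate on every tubular neighborhood of $L_2$ inside $\Si$, forcing $cl_SL_2\subset cl_SL_1$; symmetry gives equality. Item~\eqref{c13} is then obtained by iterating~\eqref{c12} around the four sectors at $p$ to get a common closure and accumulation on every sector. The genus~$\le 1$ conclusion comes from the topological argument of Oliveira and Pixton: two connected branches of $W^s(p),W^u(p)$ with equal closure in $\SS^2$ must cross by a Jordan-curve argument, and the analogous conclusion holds in $\bT^2$ via a winding-number analysis of lifts to the universal cover.

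For item~\eqref{c14}, the boundary case, the same strategy works on a collar of $C$ after a prime-end compactification around each $p_i$, which turns the $L_i$ into interior-type branches; \eqref{c11} gives auto-accumulation and~\eqref{c12} gives $cl_SL_i=cl_SL_j$ for every pair. The transverse intersection of every stable-unstable pair, in genus~$0$ or in genus~$1$ with at least four boundary periodic points, follows from a topological counting argument on how the $L_i$ partition $S$ together with area preserving recurrence — the genus restriction enters because in higher genus the $L_i$ can be routed around extra handles without meeting.

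The main obstacle I expect is item~\eqref{c11}: establishing auto-accumulation of invariant manifolds for an area preserving homeomorphism that need not be smooth, and without assuming Moser stability for elliptic periodic points. Overcoming this is precisely the purpose of the partially-defined area preserving theory developed in~\cite{OC2} and~\cite{OC1}, where the nondegeneracy condition~\eqref{ks1} of the Kupka-Smale definition alone suffices, allowing the argument to apply to the discontinuous first return maps furnished by the complete system of surfaces of section.
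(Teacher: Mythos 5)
This statement is imported verbatim from \cite{OC1} (corollary~4.9); the paper does not prove it, but it does tell you where the proof lives: item~\eqref{c12} is Mather's theorem~5.2 from \cite{Mat9} freed of the Moser-stability hypothesis, the accumulation lemma (Theorem~\ref{ACL}, i.e.\ \cite[Thm.~4.3]{OC2}) is the engine, and items~\eqref{c13},~\eqref{c14} follow from~\eqref{c12} by the topological arguments of Pixton and Oliveira in genus $0$ and $1$. Your sketch has the right overall architecture, but it contains a concrete error that breaks both~\eqref{c11} and~\eqref{c12}: you invoke ``Poincar\'e recurrence applied to points of $L$ itself'' (and later ``to points on $L_1$ near $p$''). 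A branch $L$ is a one-dimensional injectively immersed curve, hence of $\mu$-measure zero, and Poincar\'e recurrence says nothing about individual points of a null set. The correct replacement is the accumulation lemma: if $K$ is a compact connected invariant set and a branch $L$ meets $K$, then $L\subset K$; one applies it with $K=\ov{L_1}$ (after showing $\ov{L_1}\cap L_2\neq\emptyset$ from accumulation on the shared sector) to get $L_2\subset\ov{L_1}$, and with $K=\om(L)$ to get $L\subset\om(L)$. That lemma is proved by an area argument on \emph{open neighborhoods} of arcs of the branch, not by recurrence of points on the branch. Your fundamental-domain remark in~\eqref{c11} is also off: the iterates $f^{kn}(D)$ of a fundamental domain of a branch are pairwise disjoint whether or not $L$ is a connection, so disjointness cannot detect connections.

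The second, and central, gap is the one you yourself flag: when a branch fails to accumulate on an adjacent sector one obtains an invariant open domain, passes to its prime-end compactification, and area preservation forces a rational rotation number on the circle of prime ends, hence a fixed prime end of some iterate. The whole difficulty is showing that this fixed prime end cannot be accounted for by an irrationally elliptic periodic point \emph{without} assuming Moser stability; that is precisely the content of \cite{OC1} (``no elliptic points from fixed prime ends''), and your proposal defers it entirely to that reference rather than supplying an argument. Deferring is legitimate given that the theorem is itself a citation, but it means your sketch does not close the one step that distinguishes this statement from Mather's and Franks--Le~Calvez's earlier versions. The genus~$\le 1$ intersection arguments and the prime-end treatment of boundary saddles in~\eqref{c14} are described at the right level of accuracy and do match the cited sources.
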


Item~\eqref{c12} of theorem~\ref{OC1} for closed manifolds without boundary and under
the further hypothesis that the elliptic periodic points are Moser stable appears in 
Mather~\cite{Mat9} \linebreak 
theorem~5.2.
It also appears in Franks, Le Calvez~\cite{FLC} theorem~6.2 for $S=\SS^2$, 
the 2-sphere, when the elliptic points are Moser stable. 
The proof of items~\eqref{c13} and~\eqref{c14}  on the existence of homoclinic orbits
using item~\eqref{c12}, appears in the proof of theorem~4.4 of \cite{OC1} and can be 
read independently of the rest of the paper.

In section~\S\ref{bdrymap} 
we prove that if $S$ is a Birkhoff section for the Reeb flow $\psi_t$
of $(N,\la)$, then there is a continuous extension of the return map $f:S\to S$ to the 
boundary $\partial S$ which preserves its boundary components as in figure~\ref{boundary}.
If $\ga$ is a boundary component of $\partial S$ which is an irrationally elliptic closed orbit
then the restriction $f|_\ga$ has no periodic points. If $\ga$ is a hyperbolic closed orbit then
the extension $f|_\ga$ has periodic points which are the limits in $\ga$ of the intersections
$W^s(\ga)\cap S$ and $W^u(\ga)\cap S$. The extension $f|_\ga$ corresponds to the action
of the derivative of the flow $d\psi_t$ on the projective space of the contact structure $\xi$, transversal to
the vector field. Therefore the limits of the intersections $W^s(\ga)\cap S$ are  sources in $f|_\ga$
and the limits of the intersections $W^u(\ga)\cap S$ are sinks in $f|_\ga$. The other points in $\ga$
are  connections among these sources and sinks, i.e. stable manifolds of sinks which
coincide with unstable manifolds of sources inside $\ga$. These periodic points in $\ga$ are 
saddles for $f$ in $\ov S$. The sinks in $\ga$ have an unstable manifold in $S$ which is a 
connected component of $W^u(\ga)\cap S$. Similarly, the sources for $f|_\ga$ are saddles in $S$
with stable manifold a connected component of $W^s(\ga)\cap S$. The Kupka-Smale condition
for the Reeb flow $\psi_t$ implies that the branches in $S -\partial S$ of periodic points in $\partial S$
are not connections. In fact their intersections with other branches of periodic points of $f$ are transversal.

Therefore we can apply the first part of items~\eqref{c13} and~\eqref{c14} of theorem~\ref{OC1} to the
return map $f$ of a Birkhoff section for the Reeb flow $\psi_t$.
This implies item~\eqref{B1} of theorem~\ref{TB}. For periodic points which 
intersect the interior of $S$ we use item~\eqref{c13}  of theorem~\ref{OC1}
and for hyperbolic periodic 
orbits in $\partial S$ we use item~{\eqref{c14}} of theorem~\ref{OC1}.

\subsection{Homoclinics for Birkhoff sections.}\label{pi2B}
Proof of item~\eqref{B2}.

We saw in the proof of item~\eqref{B1} of theorem~\ref{TB}  in subsection \S\ref{ss1}
that we can apply theorem~\ref{OC1} to the first return map of a Birkhoff section for the
Kupka-Smale Reeb flow.
In the case that the Reeb flow admits a Birkhoff section with genus zero or one, we can 
also apply the second part of items~\eqref{c13} and~\eqref{c14} of theorem~\ref{OC1}.
This gives homoclinic orbits in every separatrix of all the hyperbolic closed orbits for the
Reeb flows $\psi_t$ selected in item~\eqref{B2} of theorem~\ref{TB}.

\subsection{Complete system of surfaces of section.}\label{ss3}
\quad

Let $(N,\la)$ be a compact contact 3-manifold
and $\psi_t$ its Reeb flow.
For $Z\subset N$ define the {\it forward trapped set} $\trap_+(Z)$ 
and the {\it backward trapped set} $\trap_-(Z)$ as
$$
\trap_{\pm}(Z)=\{\, z\in N\,:\; \exists \tau\quad  \forall t>\tau\quad \phi_{\pm t}(z)\in Z \,\}.
$$

    \begin{figure}
   \includegraphics[scale=.35]{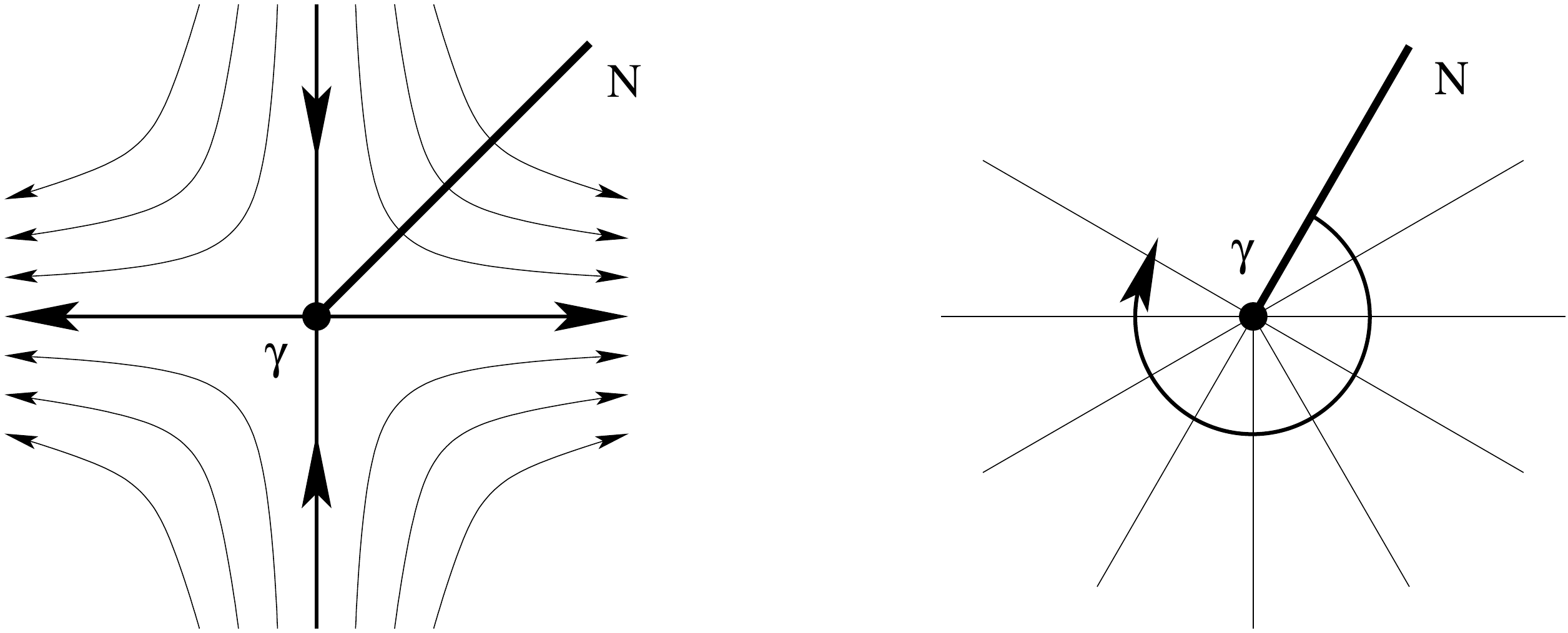}
   \caption{\footnotesize This figure shows in the left how a neighborhood $N$ of a boundary
   closed orbit $\ga\in \Kfix$ arrives to $\ga$ in a local transversal section to the flow.
   The figure in the right shows a neighborhood $N$ of a rotating boundary orbit $\ga\in \Krot$ 
   and some of the interates of $N$ under the Reeb flow.}
   \label{fixed}
   \end{figure}

   \begin{Definition}\label{dcsss}\quad
   
We say that $(\Si_1,\ldots,\Si_n)$ is a complete system 
of surfaces of section for $(N,\la)$ if
\begin{enumerate}[(i)]
\item\label{css1} Each $\Si_i$ is a connected surface of section for $(N,\la)$, i.e.
$\Si_i\subset N$ is a connected  immersed compact surface whose interior $\interior(\Si_i)$ 
is embedded and transversal to the Reeb vector field $X$ and its boundary is a 
cover of a finite collection
of closed orbits of $\psi$.

\item\label{css2} Separate the boundary orbits  in two sets\footnote{This classification is the same as
{\it radial} and {\it broken} binding orbits for broken book decompositions.}
 $\cup_i \partial\Si_i=\Krot\cup \Kfix$. The {\it non rotating} periodic orbits in $\Kfix$
 are hyperbolic and
 have a neighborhood  $N$ in $\Si_i$ 
 which arrives to the boundary inside a sector as in figure~\ref{fixed}.
 For the {\it rotating} boundary orbits\footnote{Rotating boundary orbits can be hyperbolic or elliptic.}
  in $\Krot$
  there is $\ell>0$ such that each $\ga\in \Krot$ has a neighborhood $N_\ga$ in $\Si_i$
  such that  
  $$
  \forall z\in N_\ga \quad \psi_{]0,\ell[}(z)\cap \Si_i\ne \emptyset
  \quad\&\quad \psi_{]-\ell,0[}(z)\cap \Si_i\ne \emptyset.
  $$
  \item\label{css3} At each\footnote{This condition says that the flow rotates more than the surface of section when it approaches its boundary orbit $\gamma$.} rotating boundary orbit $\ga\in \Krot\cap \Si_i$ the extension of $\Si_i$
to the unit normal bundle $\cN(\ga)$ of $\ga$ by 
 blowing up a neighborhood of $\ga$ using polar coordinates, 
  is an embedded collection of closed curves 
transversal to the extension of the Reeb vector field to $B_\ga$.

  \item\label{css4} Every orbit intersects $\bSi=\cup_i\Si_i$.
  \item\label{css5} $\trap_{\pm}(N\setminus\bSi) \subset W^{s,u}(\Kfix)$.
\end{enumerate}
\end{Definition}

Recall that a  {\it Birkhoff section} is a connected embedded surface $\Si\subset N$
whose interior is transversal to the vector field. Its boundary
is a cover of finitely many closed orbits and there is $\ell>0$ such that 
for all $z\in N$, $\psi_{]0,\ell[}(z)\cap \Si\ne \emptyset$ and
$\psi_{]-\ell,0[}(z)\cap \Si\ne \emptyset$.
We use the same notation $\Kfix$, $\Krot$, $\partial K_i$, $\partial\bSi$ for
a collection of periodic orbits or their union. 
Here
$\partial\bSi := \cup_i \;\partial \Si_i$ and also
$\interior(\bSi): = \cup_i \interior \Si_i$.

\begin{Lemma}\label{larrow}\quad

Let $(\Si_i)_{i=1}^n$ be a complete system of surfaces of section for $(N,\la)$.

Let $\ga\in \Kfix$ and  a connected component $L\subset W^{s,u}(\ga)\setminus \ga$, then 
$$
\exists \xi\in \Kfix \qquad L\cap W^{u,s}(\xi)\ne \emptyset.
$$
\end{Lemma}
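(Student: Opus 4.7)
\emph{Plan.} I would argue by contradiction. By reversing time if necessary we may assume $L\subset W^u(\ga)\setminus\ga$, and we look for $\xi\in\Kfix$ with $L\cap W^s(\xi)\ne\emptyset$. Suppose to the contrary that $L\cap W^s(\xi)=\emptyset$ for every $\xi\in\Kfix$. Then by item~\eqref{css5} of Definition~\ref{dcsss} no point of $L$ lies in $\trap_+(N\setminus\bSi)$, so the forward orbit of every $z\in L$ meets $\bSi$ at arbitrarily large times and the first return time $\tau_+:L\to(0,\infty)$ is well defined.

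The main preparatory step is to exhibit a sequence $z_n\in L$ with $\tau_+(z_n)\to\infty$. The geometric input is item~\eqref{css2} and Figure~\ref{fixed}: the local trace of $\bSi$ at $\ga\in\Kfix$ lies inside a single sector of the hyperbolic orbit $\ga$. Because $L$ is a separatrix of $\ga$, a sector count (in both the positive and negative Floquet multiplier cases) shows that $L$ is adjacent to a sector disjoint from the local trace of $\bSi$; any sequence of points of $L$ converging to $\ga$ through such a sector must shadow $\ga$ for arbitrarily long times before leaving the local neighborhood of $\ga$ to possibly reach $\bSi$, whence $\tau_+(z_n)\to\infty$.

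Pass to a subsequence $z_n\to z_\infty\in\ov L$. Continuity of the flow together with $\tau_+(z_n)\to\infty$ force $\psi_{[0,T]}(z_\infty)$ to be disjoint from $\interior(\bSi)$ for every $T>0$; choosing the $z_n$ outside a fixed neighborhood of $\ga$ ensures that $z_\infty$ does not lie on a boundary periodic orbit of $\bSi$, so $z_\infty\in\trap_+(N\setminus\bSi)\subset W^s(\Kfix)$ by~\eqref{css5}. Hence $z_\infty\in W^s(\xi)$ for some $\xi\in\Kfix$. Since $\psi_T(z_\infty)\to\xi$ as $T\to\infty$, a diagonal choice $\psi_{T_n}(z_n)\in L$ (using the flow-invariance of $L$) accumulates on $\xi$, so the smooth flow-invariant $2$-manifold $L$ accumulates on the hyperbolic orbit $\xi$.

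The final---and hardest---step is to upgrade ``$L$ accumulates on $\xi$'' to ``$L\cap W^s(\xi)\ne\emptyset$.'' I would linearize the first return map $P$ of a small disk $T$ transversal to $\xi$, in which coordinates $\xi$ is a hyperbolic fixed point; the trace $L\cap T$ is a smooth $P$-invariant $1$-submanifold accumulating on $\xi$, so by standard local hyperbolic analysis it must locally coincide near $\xi$ with either $W^s(\xi,P)$ or $W^u(\xi,P)$. The unstable alternative forces $W^u(\xi)\setminus\xi\subset L\subset W^u(\ga)$, contradicting uniqueness of the $\alpha$-limit of an orbit when $\xi\ne\ga$; the case $\xi=\ga$ is absorbed into the conclusion since the accumulation of $L$ on $\ga$ then produces a homoclinic point of $\ga$ itself, giving $L\cap W^s(\ga)\ne\emptyset$. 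The stable alternative directly yields $L\supset W^s(\xi)\setminus\xi$ locally near $\xi$, whence $L\cap W^s(\xi)\ne\emptyset$. Either way the hypothesized disjointness is contradicted, and the lemma follows. The main obstacle I anticipate is this final step, where one must carefully control the smooth invariant manifold $L$ near the hyperbolic orbit $\xi$ and rule out pathological accumulations that do not intersect $W^s(\xi)$.
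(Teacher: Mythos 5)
Your strategy has two genuine gaps, and the second one is fatal.

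First, the preparatory step is internally inconsistent. The only points of $L$ you actually produce with $\tau_+\to\infty$ are points on the local unstable branch converging to $\ga$ itself; for those, any subsequential limit $z_\infty$ lies on $\ga\subset\Kfix$, and the conclusion ``$z_\infty\in W^s(\Kfix)$'' is vacuous. You then ask to choose the $z_n$ ``outside a fixed neighborhood of $\ga$,'' but nothing in your construction supplies such points: if you flow the near-$\ga$ points forward until they exit a fixed neighborhood $U$, they land on $L\cap\partial U$ and may hit $\bSi$ immediately afterwards, so their residual return times need not blow up. Under the contradiction hypothesis it is entirely consistent with everything you have established that $\ov L$ meets $\Kfix$ only in $\ga$, via the trivial accumulation of the local branch, and then your argument produces no information. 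Second, and more seriously, the final ``upgrade'' rests on a false principle: a flow-invariant immersed surface accumulating on a hyperbolic closed orbit $\xi$ need \emph{not} contain a branch of $W^s(\xi)$ or $W^u(\xi)$ near $\xi$. The trace $L\cap T$ on a transversal disk is a union of arcs that can cluster onto $\xi$ and onto $W^s_{loc}(\xi,P)\cup W^u_{loc}(\xi,P)$ without any single arc passing through $\xi$ or coinciding with an invariant branch; invariant manifolds routinely accumulate on one another without intersecting (this is precisely why passing from accumulation to intersection requires extra structure, here supplied elsewhere in the paper by the $\la$-lemma \emph{after} a transversal intersection is already known, as in lemma~\ref{L15}). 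Your handling of the case $\xi=\ga$ (``produces a homoclinic point'') is also circular: $L$ locally coinciding with $W^u_{loc}(\ga)$ is automatic and yields no homoclinic.

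The paper's proof avoids all of this by a global contact-area argument rather than local hyperbolic analysis. Assuming $L\cap W^s(\Kfix)=\emptyset$, the return map is defined on all of $L$ by \eqref{css5}, so an essential embedded circle $S\subset L$ has infinitely many disjoint forward images landing in a single component $\Si_{i_1}$; since $d\la|_L\equiv 0$ and $\la(X)\equiv 1$, each such circle carries the fixed positive action $m\cdot\mathrm{period}(\ga)$, and Stokes' theorem then contradicts either the finiteness of $\int_{\Si_{i_1}}d\la$ (contractible case) or the positivity of the area of an annulus in $\Si_{i_1}$ bounded by two of the circles (non-contractible case). That quantitative input from the contact form is exactly what your purely topological approach is missing.
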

\begin{proof}\quad

We prove it only for $L\subset W^u(\ga)$, the other case is similar.
Suppose by contradiction that 
\begin{equation}\label{LWs}
L\cap W^s(\Kfix)=\emptyset.
\end{equation}
Let $z\in L$. 
Let $S$ be an essential smooth embedded circle in $L$.
By \eqref{LWs} and \eqref{css5},
the first return time $\tau:S\to\re$ 
$$
\tau(x):=\inf\{\, t>0: \psi_t(x) \in\bSi\,\}
$$
is well defined and finite on $S$.
The return map $f:S\to\interior (\bSi)$, $f(x)=\psi_{\tau(x)}(x)$ is an immersion.
Since $S\subset L$ is connected, compact and disjoint from periodic orbits, 
there is a component $\Si_{i_0}$ of $\bSi$ such that 
$f:S\to f(S)\subset \Si_{i_0}$ is a diffeomorphism. By the intrinsic dynamics of $\psi_t$ on $L$,
we have that  $f(S)$ is an essential  smooth
embedded circle in $L$. Repeating this argument there is a component $\Si_{i_1}$ of $\bSi$
and an infinite collection $\{S_k\}_{k\in\na}$ of disjoint essential smooth embedded
circles in $L$ given by $S_k=f_k(S)$, where $f_k$ is the $k$-th return of $S$ to $\Si_{i_1}$.
Observe that for $i<j$, the circles $S_i$, $S_j$ bound an embedded annulus $A_{ij}$ in $L$.

The circles $S_k$ are disjoint and embedded in $\Si_{i_1}$. 
By Lemma 3.2 or Theorem~3.3 in~\cite{JMM}, there is a free homotopy
class in $\Si_{i_1}$ which contains infinitely many of them $\{S_{k_n}\}_{\in\na}$.

If the circles $S_{n_k}$ are contractible in $\Si_{i_1}$, they bound disjoint disks $D_{k_n}$ with area
$$
\int_{D_{k_n}}d\la=\int_{S_{k_n}}\la=\int_{A(k_n)}d\la +\int_{m\cdot\ga} \la 
= m\cdot\text{period(}\ga)>0,
$$
where $A(k_n)$ is the annulus on $L$ with boundaries $\{S_{k_n},\ga\}$ and
$m\in\{1,2\}$ wether $\partial L$ covers $\ga$ $m$-times.
We have used that $\la(X)\equiv 1$ and that $d\la|_L\equiv 0$ because the Reeb vector field is tangent to $L$.
This contradicts the fact that the area of $\Si_{i_1}$ is finite, because
$$
\text{area}(\Si_{i_1})=\int_{\Si_{i_1}}d\la =\tsum_{\ga\in \partial\Si_{i_1}}m_\ga\int_\ga\la<+\infty,
$$
where $\partial\Si_{i_1}$ is finite and $m_\ga$ is the covering number of $\partial\Si_{i_1}$ over $\ga$.

If the homotopy class of the $S_{k_n}$ in $\Si_{i_1}$ is non trivial then $S_{k_1}$ and $S_{k_2}$
bound an annulus $B_{12}$ in $\Si_{i_1}$.
The annulus $B_{12}$ has positive $d\la$-area because the transversality 
of $\Si_{i_1}$ to the Reeb vector field implies that $d\la$ is non-degenerate on $\Si_{i_1}$.
They also bound the annulus $A_{k_1 k_2}$ in $L$ with zero $d\la$-area, because $d\la\vert_L\equiv 0$. 
Therefore
$$
0=\int_{A_{k_1k_2}}d\la =\int_{S_{k_1}}\la-\int_{S_{k_2}}\la
=\int_{B_{12}}d\la >0.
$$
A contradiction.

\end{proof}
\begin{Remark}
 Using proposition~\ref{lma} instead of proposition~2.1 in \cite{CM1} 
 it is possible to reproduce the proofs of lemma~5.2  and theorem~B 
 in~\cite{CM2} to obtain
 \begin{equation}\label{fixus}
 \forall \ga\in \Kfix \qquad  \ov{W^s(\ga)}=\ov{W^u(\ga)},
 \end{equation}
 whenever $(N,\la)$ is Kupka-Smale and has a complete systems of surfaces of section.
 Then proposition~\ref{p17} and section~3 in~\cite{CM2} give a Birkhoff section for 
 $(N,\la)$ starting from a complete system instead of a broken book decomposition.
 
 Here we will use theorem~\ref{TB}.\eqref{B1},
 proved in subsection~\S\ref{ss1}, to get \eqref{fixus}.
 \end{Remark}

\begin{Lemma}\label{L15}\quad

Let $\a$, $\be$ be hyperbolic periodic orbits of a Kupka-Smale 
Reeb flow of a closed contact 3-manifold $(N,\la)$.
Suppose that $\be$ has homoclinic orbits.
Let $Q$ be a separatrix of $\a$.
Suppose that 
\begin{equation}\label{l150}
cl (W^s(\be)\cup W^u(\be))\subset \ov Q.
\end{equation}
Then 
\begin{align}
Q\subset W^u(\a) \quad\then \quad Q\cap W^s(\be)\ne \emptyset,
\label{l151}\\
Q\subset W^s(\a) \quad\then \quad Q\cap W^u(\be)\ne \emptyset.
\label{l152}
\end{align}
Moreover, all the separatrices of $\a$ have homoclinics.
\end{Lemma}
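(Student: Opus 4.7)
The plan is to deduce \eqref{l151} and \eqref{l152} via a $\la$-lemma argument at $\be$, and then to derive the ``moreover'' clause by combining them with the auto-accumulation of the invariant manifolds of $\a$ proved in item~\eqref{B1} of theorem~\ref{TB}. For \eqref{l151}, I would fix a transverse homoclinic point $p\in W^s(\be)\pitchfork W^u(\be)$, which exists because $\be$ has homoclinic orbits and $(N,\la)$ is Kupka-Smale. Hypothesis \eqref{l150} places $p$ in $\ov Q$, so there is a sequence $q_n\in Q$ with $q_n\to p$. Heuristically, $Q\subset W^u(\a)$ is an invariant $2$-surface accumulating on $p\in W^s(\be)$, and iterating small disks of $Q$ forward under the flow stretches them $C^1$-close to $W^u(\be)$ by the hyperbolic dynamics at $\be$; such a disk near $p$ must cross $W^s(\be)$ transversally since $W^u(\be)\pitchfork W^s(\be)$ at $p$, and the crossing point lies in $Q$ by flow invariance.

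To implement this, I would pass to a Poincar\'e section $\Si_\be$ transverse to $\be$. The first-return map $f:\Si_\be\to\Si_\be$ is an area-preserving diffeomorphism with $\be\cap\Si_\be$ as a hyperbolic fixed point, and the trace $Q\cap\Si_\be$ is an $f$-invariant $1$-dimensional set accumulating on the transverse homoclinic intersection $p\cap\Si_\be$. The planar $\la$-lemma then implies that successive $f$-iterates of short arcs of $Q\cap\Si_\be$ located near $f^{-k}(p\cap\Si_\be)\in W^s(\be)\cap\Si_\be$ rotate their tangent direction toward that of $W^u(\be)\cap\Si_\be$; for $k$ large enough, the iterate is $C^1$-close to $W^u(\be)\cap\Si_\be$ near $p\cap\Si_\be$, and consequently crosses the curve $W^s(\be)\cap\Si_\be$ transversally. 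This crossing yields a point of $Q\cap W^s(\be)$, proving \eqref{l151}; the symmetric statement \eqref{l152} follows by reversing time. The hard part is ruling out the degenerate scenario in which the accumulating arcs of $Q\cap\Si_\be$ are tangent to $W^s(\be)\cap\Si_\be$ at every limit point, which I would exclude using that $Q\ne W^s(\be)$ (Kupka-Smale forbids two distinct invariant manifolds from sharing a $2$-dimensional piece), so the uniform expansion in the $W^u$ direction eventually tilts the iterated tangent lines off the stable direction.

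For the last clause, I would invoke item~\eqref{B1} of theorem~\ref{TB} (proved in \S\ref{ss1}) applied to $\a$, which gives $\ov{Q'}=\ov Q$ for every separatrix $Q'$ of $\a$; hence hypothesis \eqref{l150} continues to hold with any separatrix $Q'$ in place of $Q$. Applying \eqref{l151} to each unstable separatrix $Q_u'\subset W^u(\a)$ and \eqref{l152} to each stable separatrix $Q_s'\subset W^s(\a)$ yields transverse heteroclinic intersections $Q_u'\cap W^s(\be)\ne\emptyset$ and $Q_s'\cap W^u(\be)\ne\emptyset$, with transversality guaranteed by the Kupka-Smale condition. The $\la$-lemma at $\be$ then forces $Q_u'$ to accumulate in $C^1$ on $W^u(\be)$, and $Q_s'$ to accumulate on $W^s(\be)$. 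Near a transverse homoclinic of $\be$, where $W^u(\be)$ and $W^s(\be)$ cross transversally, the surfaces $Q_u'$ and $Q_s'$ (being $C^1$-close to them there) must therefore cross transversally as well, and this intersection is a homoclinic point of $\a$ that lies simultaneously in the arbitrarily chosen separatrices $Q_u'$ and $Q_s'$. Ranging over all such pairs shows that every separatrix of $\a$ contains a homoclinic.
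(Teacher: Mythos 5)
Your deduction of the closing clause from \eqref{l151} and \eqref{l152} is essentially the paper's own argument (propagate \eqref{l150} to every separatrix via theorem~\ref{TB}.\eqref{B1}, get the two transversal heteroclinic intersections, and finish with the $\la$-lemma at a transverse homoclinic point of $\be$), and that part is fine. The genuine gap is in your proof of \eqref{l151} and \eqref{l152} themselves: the argument is circular. The $\la$-lemma, and the ``tilting of tangent lines toward the unstable direction'' that you invoke, take as \emph{hypothesis} an arc or disk meeting $W^s(\be)$ transversally --- which is exactly the conclusion you are trying to reach. Hypothesis \eqref{l150} only provides points $q_n\in Q$ converging to the homoclinic point $p$; it gives no lower bound on the size of the connected components of $Q\cap\Si_\be$ through the $q_n$. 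A priori these components could be tiny arcs or closed circles shrinking onto $p$ and disjoint from $W^s(\be)$. A piece of $Q$ sitting at distance $\de$ from $W^s_{loc}(\be)$ with diameter much smaller than $\de$ exits a neighborhood of $\be$ essentially all at once under forward iteration; it is \emph{not} stretched $C^1$-close to a fundamental domain of $W^u(\be)$ and need never cross $W^s(\be)$. Your remark that $Q\ne W^s(\be)$ does not address this: accumulation does not imply intersection.

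The paper closes this gap with a mechanism that is contact-geometric rather than hyperbolic, and it is the heart of the lemma. Choose the transverse disk $D$ at $p\in\be$ so that $D\cap\a=\emptyset$ and $\int_D d\la<\int_\a\la$. Then each connected component $J$ of $Q\cap D$ is a properly embedded $1$-manifold of $D$, hence either a circle or an arc with both endpoints on $\partial D$. The circle case is killed by Stokes: such a $J$ is transverse to the flow inside $Q$, hence (Poincar\'e--Bendixson on the cylinder $Q$, which carries no periodic orbits) essential in $Q$, so $\int_J\la$ equals a positive multiple of $\int_\a\la$, while $J$ bounds a disk $A\subset D$ with $\int_A d\la=\int_J\la$; this contradicts $\int_D d\la<\int_\a\la$. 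Therefore every component is a proper arc, and an arc entering the interior of an arbitrarily small rectangle $B$ with $\partial B\subset W^s(\be)\cup W^u(\be)$ (taken from the grid that the $\la$-lemma produces from $\be$'s own transverse homoclinic) must cross $\partial B$, which yields the point of $Q\cap W^s(\be)$. Without the circle-exclusion by the area bound and the proper-arc/rectangle crossing, \eqref{l151} and \eqref{l152} are not established, and the rest of your proof has nothing to stand on.
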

\begin{proof}

Let $D$ be a small disk transversal to the Reeb flow containing a point $p\in\be\cap D$
and such that 
\begin{equation}\label{dfd}
D\cap \a=\emptyset \quad\text{ and }\quad
\int_D d\la < \int_\a \la.
\end{equation}
 The Kupka-Smale condition implies that the homoclinic intersections
in $W^s(\be)\cap W^u(\be)$ are transversal. 
By the $\la$-lemma there are segments of $W^u(\be)\cap D$ (resp. $W^s(\be)\cap D$) 
accumulating in the $C^1$ topology on the whole local component 
$W^u_\e(\be)\cap D$ (resp. 
$W^s_\e(\be)\cap D$).
These segments form a grid in $D$ nearby $p$ which contains rectangles of arbitrarily small 
diameter. Choose  small rectangles $A$, $B$ with boundaries in 
$W^s(\be)\cup W^u(\be)$ 
such that 
$$
cl(A)\subset int(B)\subset cl(B)\subset int(D).
$$
Since $cl(W^s(\be)\cup W^u(\be))\subset\ov Q$, we have that $Q\cap D$ accumulates 
on the boundary $\partial A$.
Then there is a point  $q\in Q\cap int(B)$. Let $J$ be the connected component
of $Q\cap D$ containing $q$. Since by \eqref{dfd} $D\cap \a=\emptyset$, $J$ is 
either a circle or a curve with endpoints in $\partial D$. 
Suppose first that $J$ is a circle. Since $J$ is transversal 
to the flow inside $Q$ and there are no periodic orbits in $Q$,
by Poincar\'e-Bendixon theorem,  $J$ must be an essential 
embedded circle in~$Q$.   
Let $A\subset D$ be a disk with $\partial A =J$. Since $Q$ is tangent to the 
Reeb vector field, $d\la|_Q\equiv 0$. By Stokes theorem
\begin{equation}\label{ara}
\int_A d\la =\int_J \la = k\cdot \int_\a \la,
\quad k\in\{1,2\},
\end{equation}
with $k=2$ if $\a$ is negative hyperbolic.
But \eqref{ara} contradicts \eqref{dfd} because $A\subset D$.
Therefore $J$ is a curve with endpoints in $\partial D$.
Since $q\in J\cap int(B)\ne\emptyset$ and $B\cap \partial D=\emptyset$, we
have that $\exists r\in J\cap \partial B\ne \emptyset$.
Then $r\in Q\cap W^{u,s}(\be)$ if $Q\subset W^{s,u}(\a)$.
This proves \eqref{l151} and \eqref{l152}.

Suppose now that $Q\subset W^u(\a)$, the case $Q\subset W^s(\a)$ is similar.
By \eqref{l151}
\begin{equation}\label{qcap}
Q\cap W^s(\be)\ne\emptyset.
\end{equation}
But by \ref{TB}.\eqref{B1}, $\ov W^s(\a) = \ov W^u(\a)=\ov Q$.
Hence $cl(W^s(\be)\cup W^u(\be))\subset \ov Q=\ov W^s(\a)$.
By \eqref{l152} applied to a separatrix in $W^s(\a)$
we have that 
\begin{equation}\label{sguk}
W^u(\be)\cap W^s(\a)\ne\emptyset.
\end{equation}
Since by the Kupka-Smale condition the heteroclinic  intersections are transversal, 
\linebreak
equations
\eqref{qcap}, \eqref{sguk} and the  $\la$-lemma imply that 
$Q\cap W^s(\a)\ne \emptyset$.
Thus the separatrix $Q$ has homoclinics.
Now observe that by \ref{TB}.\eqref{B1} the condition~\eqref{l150}
is satisfied by all the separatrices of $\a$.

\end{proof}

\begin{Proposition}\label{p17}
\quad

\hskip 1cm 
Let $(N,\la)$ be a closed contact 3-manifold satisfying the Kupka-Smale condition.
\linebreak
Let $(\Si_1,\ldots,\Si_m)$ be a complete system of surfaces of section
 with boundary components $K=K_\text{rot}\cup\Kfix$.
Then every component of $W^{s,u}(\ga)\setminus \ga$ of every non rotating boundary
orbit $\ga\in \Kfix$
has homoclinics and $\ov{W^s(\ga)}=\ov{W^u(\ga)}$.

\end{Proposition}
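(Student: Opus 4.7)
The assertion $\ov{W^s(\ga)}=\ov{W^u(\ga)}$ is immediate from Theorem~\ref{TB}.\eqref{B1} (proved in \S\ref{ss1}) applied to the hyperbolic orbit $\ga\in\Kfix$. For the existence of homoclinics on every separatrix, the plan is to first produce a homoclinic at \emph{some} orbit in $\Kfix$ using a heteroclinic cycle built via Lemma~\ref{larrow}, and then to propagate homoclinics backwards along the cycle using Lemma~\ref{L15}.

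To build the chain, set $\ga_0:=\ga$ and choose any unstable separatrix $L_0\subset W^u(\ga_0)\setminus\ga_0$. By Lemma~\ref{larrow} there exists $\ga_1\in\Kfix$ with $L_0\cap W^s(\ga_1)\ne\emptyset$, and by the Kupka-Smale transversality condition~\eqref{ks2} this intersection is transverse. Now pick any unstable separatrix $L_1$ of $\ga_1$ and iterate: Lemma~\ref{larrow} yields $\ga_2\in\Kfix$, and so on. Since $\Kfix$ is finite, there must exist indices $0\le i<j$ with $\ga_i=\ga_j$, closing the chain into a cycle $\ga_i\to\ga_{i+1}\to\cdots\to\ga_{j-1}\to\ga_j=\ga_i$ of transverse heteroclinic connections.

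Traversing this cycle and applying the inclination ($\la$-)lemma inductively at each transverse intersection $W^u(\ga_k)\pitchfork W^s(\ga_{k+1})$ produces, for every $k$ in the cycle, a transverse intersection $W^u(\ga_i)\pitchfork W^s(\ga_k)$; taking $k=j$ and using $\ga_j=\ga_i$ yields a transverse homoclinic for $\ga_i$. To propagate homoclinics backwards, induct from $k=i-1$ down to $k=0$: suppose $\ga_{k+1}$ already has homoclinics, and let $Q$ be the unstable separatrix of $\ga_k$ that meets $W^s(\ga_{k+1})$ transversally. The $\la$-lemma applied at that transverse intersection gives $\ov{W^u(\ga_{k+1})}\subset\ov Q$, and Theorem~\ref{TB}.\eqref{B1} applied to $\ga_{k+1}$ yields $\ov{W^u(\ga_{k+1})}=\ov{W^s(\ga_{k+1})}$, so hypothesis~\eqref{l150} of Lemma~\ref{L15} holds with $\a=\ga_k$, $\be=\ga_{k+1}$. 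The ``moreover'' clause of Lemma~\ref{L15} then gives homoclinics in \emph{all} separatrices of $\ga_k$. Descending the induction down to $k=0$ produces homoclinics in every separatrix of $\ga_0=\ga$.

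The main delicate step is the cycle closure: carrying the $\la$-lemma iteratively along the chain while preserving transversality and the $C^1$ accumulation of the iterated pieces of $W^u(\ga_i)$ on each $W^u(\ga_k)$, so as to guarantee a genuine transverse homoclinic at $\ga_i$ rather than a mere limiting accumulation. This step is classical in hyperbolic dynamics but demands careful bookkeeping of which local unstable disks are being carried forward; fortunately the Kupka-Smale hypothesis~\eqref{ks2} makes every intermediate heteroclinic intersection transverse, so the standard inductive inclination-lemma argument applies without modification.
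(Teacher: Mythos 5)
Your proposal is correct and follows essentially the same route as the paper: Lemma~\ref{larrow} plus finiteness of $\Kfix$ to close a heteroclinic cycle, the $\la$-lemma for transitivity, Theorem~\ref{TB}.\eqref{B1} to verify hypothesis~\eqref{l150}, and Lemma~\ref{L15} to produce homoclinics on all separatrices. The only difference is organizational: the paper collapses the whole chain at once via the transitivity property~\eqref{pGa2} to get $(\a,\be,\be)\in\Ga$ and applies Lemma~\ref{L15} a single time, whereas you propagate backwards along the cycle applying Lemma~\ref{L15} at each step; both are valid.
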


\pagebreak

\begin{proof}\quad

Write 
$(\ka_1,\ldots,\ka_n)\in \Ga$
if  $\forall i$ $\ka_i\in \Kfix$ and
$W^u(\ka_i)\cap W^s(\ka_{i+1})\ne\emptyset$ for $1\le i<n$.
The definition of $\Ga$ implies that 
\begin{equation}\label{pGa1}
(\ka_1,\ka_2)\in \Ga, \quad (\ka_2,\ka_3)\in \Ga
\quad \then \quad (\ka_1,\ka_2,\ka_3)\in\Ga.
\end{equation}
The $\la$-lemma implies that 
\begin{equation}\label{pGa2}
(\ka_1,\ldots,\ka_n)\in \Ga \quad \then \quad (\ka_1,\ka_n)\in \Ga.
\end{equation}

By lemma~\ref{larrow} 
\begin{equation}\label{pGa3}
\forall \be\in \Kfix \quad \exists \a,\ga\in \Kfix
\qquad
\{(\a,\be), (\be,\ga)\}\subset \Ga.
\end{equation}

By \eqref{pGa3} for any $\a\in \Kfix$ there is an infinite sequence
$(\a,\ka_1,\ka_2,\ldots)\in \Ga$.
Since $\Kfix$ is finite, there are $n\ne m$ such that $k_n=k_m=:\be$.
By properties~\eqref{pGa1} and \eqref{pGa2}, $(\a,\be,\be)\in\Ga$.
Thus
\begin{equation}\label{eab}
\forall \a\in \Kfix \quad \exists \be\in \Kfix \qquad (\a,\be,\be)\in \Ga.
\end{equation}

Let $\a\in \Kfix$ and let $Q$ be a component of $W^{s,u}(\a)\setminus\a$.
By theorem~\ref{TB}.\eqref{B1}, proved in~\S\ref{ss1}, $\ov Q=\ov{W^s(\a)}=\ov{W^u(\a)}$.

Let $\be\in\Kfix$ be given by \eqref{eab}.
Since the intersection $W^u(\a)\cap W^s(\be)$ is transversal, 
by the $\la$-lemma $W^u(\be)\subset \ov{W^u(\a)}$.
Then by theorem~\ref{TB}.\eqref{B1},
$$
cl(W^s(\be)\cup W^u(\be))=\ov{W^u(\be)}\subset \ov{W^u(\a)}=\ov Q.
$$
Then lemma~\ref{L15} implies that $Q$ has homoclinics.

\end{proof}

\subsection{Homoclinics for complete systems.}\label{pi3B}
Proof of item~\ref{TB}.\eqref{B3}.

We shall use the following

\begin{Theorem}[The accumulation lemma]\label{ACL}\quad

 Let $S$ be a  connected surface with compact boundary provided with 
a Borel measure $\mu$ such that open non-empty subsets have positive
measure and compact subsets have finite measure.
Let $S_0\subset S$ be an open subset with   $fr_S S_0$  compact.

Let $f,f^{-1}:S_0\to S$ be an area preserving homeomorphism of $S_0$
onto  open subsets $f(S_0)$, $f^{-1}(S_0)$ of $S$.
Let $K\subset S_0$ be a compact connected invariant subset of $S_0$.

If $L\subset S_0$ is a branch of $f$ and $L\cap K\ne \emptyset$,
then $L\subset K$.
\end{Theorem}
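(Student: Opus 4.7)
Plan:

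The strategy is to show that $L\cap K$ is clopen in $L$ (with its intrinsic $1$-manifold topology); together with the connectedness of $L$ and the hypothesis $L\cap K\neq\emptyset$, this forces $L\cap K=L$, i.e.\ $L\subset K$. By passing to a suitable iterate $f^N$ (with $N$ a multiple of the period of the hyperbolic periodic point $p$ from which $L$ emanates, doubled if the Floquet multiplier is negative) and, by symmetry, replacing $f$ by $f^{-1}$ if needed, we may assume that $L$ is an $f$-invariant unstable branch of a fixed point $p$ of $f$. Closedness of $L\cap K$ in $L$ follows from $K$ being closed in $S$. To see that $p\in K$, pick any $z\in L\cap K$: the backward iterates $\{f^{-j}(z)\}_{j\geq 0}$ lie in $K$ by invariance and converge to $p$ since $L$ is an unstable branch of $p$, so closedness of $K$ yields $p\in K$.

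For the openness of $L\cap K$ in $L$, set $U:=L\setminus K$ and argue $U=\emptyset$ by contradiction. Parameterize $L$ by $(0,\infty)$ so that $f$ acts as the linear expansion $t\mapsto\lambda t$ with $|\lambda|>1$. Since $U$ is $f$-invariant and open in $L$, its connected components (which are pairwise disjoint open intervals) are permuted by $f$. A ray component $(c,\infty)$ or $(0,c)$ is impossible: its $f$-image would be a distinct but strictly overlapping ray, contradicting the fact that two distinct components must be disjoint (while equality would force $\lambda=1$). Hence every component of $U$ is a bounded open sub-interval $J_k=(a_k,b_k)$ with $a_k,b_k\in L\cap K$, and $f$ cyclically shifts the family $\{J_k\}$ along $L$.

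Given a bounded component $J=(a,b)$, select a small open $2$-disk $N\subset S_0\setminus K$ lying in a tubular neighborhood of $L$ about an interior point of $J$, thin enough in the direction transverse to $L$ that the orbit $\{f^k(N)\}_{k\in\mathbb{Z}}$ is pairwise disjoint. Area preservation gives $\mu(f^k(N))=\mu(N)>0$, so $\sum_k\mu(f^k(N))=\infty$. The technical heart is to arrange, using the local hyperbolic normal form at $p$ and a preliminary iteration, that the whole orbit $\{f^k(N)\}_k$ stays inside a fixed compact neighborhood $W$ of $K\cup\{p\}$ in $S$; since $\mu(W)<\infty$, this contradicts the divergent sum, forcing $U=\emptyset$. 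The principal obstacle is controlling the transverse spread of iterates, since $f^{-1}$ expands the stable direction at $p$ so that backward iterates of a naive $N$ need not remain near $p$; this is resolved by initially placing $N$ as the forward image under a high iterate of a tiny disk close to $p$ with very small extent along the local stable manifold, which keeps its entire $f$-orbit confined near $K\cup\{p\}$.
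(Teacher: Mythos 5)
The paper does not prove Theorem~\ref{ACL} itself: it quotes it from \cite{OC2} (Thm.~4.3) and, for the global case, from Mather \cite{Mat9}, where the argument runs through the ideal boundary/prime-end compactification of the complementary domains of $K$. Your soft reductions are correct but carry essentially none of the content: passing to an iterate so that $L$ is an $f$-invariant unstable branch of a fixed point $p$, closedness of $L\cap K$ in $L$, $p\in K$, and the fact that the components of $L\setminus K$ are bounded intervals whose $f$-orbits are infinite families of pairwise disjoint intervals (they are not permuted ``cyclically''). The entire difficulty sits in the step you defer, and as described that step cannot be carried out.

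You need an open set $N$ whose orbit $\{f^k(N)\}_{k\in\Z}$ is (i) pairwise disjoint and (ii) contained in a finite-measure set $W$. For an area-preserving map these two properties are mutually exclusive: if (ii) holds and the iterates are defined, Poincar\'e recurrence applied to $\bigcup_k f^k(N)$ produces a return $f^m(N)\cap N\ne\emptyset$, i.e.\ (i) fails. So your final contradiction is vacuous unless both (i) and (ii) are genuinely derived from the assumption $L\not\subset K$, and neither derivation is given; in fact both are obstructed. For (i): the branch $L$ typically accumulates on itself ($L\subset\om(L)$, cf.\ Theorem~\ref{OC1}\eqref{c11}), so shrinking $N$ transversally to $L$ does not prevent some $f^m(N)$ from meeting $N$. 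For (ii): the backward iterates of any open set meeting $L$ expand along the stable direction by the fixed factor $\la$ per step, so they leave every compact neighborhood of $K\cup\{p\}$ no matter how small the initial stable extent is; and replacing $N$ by $f^M(N_0)$ does not change the orbit $\{f^k(N)\}_k$ as a family of sets, so it cannot repair this. Restricting to forward iterates does not help either: the normal form controls points of $N$ off $L$ only while they remain in the linearizing chart, and for the partially defined map $f:S_0\to S$ of the statement the iterates of $N$ need not even be defined. A correct proof has to use the topology of the complementary domains of $K$ and their ideal boundaries, which is the route taken in \cite{OC2} and \cite{Mat9}.
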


This version of theorem~\ref{ACL} is proved in \cite[Thm.~4.3]{OC2},
its proof also applies to branches $L$ of saddle points in the boundary $\partial S_0$.
Theorem~\ref{ACL} was originally proved in Mather~\cite[corollary~8.3]{Mat9} 
for surfaces without boundary and global maps ($S_0=S$).
It is also proved in Franks, Le Calvez~\cite[lemma~6.1]{FLC}
for $S=S_0=\SS^2$, the 2-sphere.
This version is needed to prove theorem~\ref{toc1} in ~\cite{OC1}.
In proposition~\ref{PQW} we only use its global version $S_0=S$,
but in corollary~\ref{c115} we use this version for partially defined maps.

\begin{Proposition}\quad\label{PQW}

Let $(N,\la)$ be a closed contact 3-manifold satisfying the Kupka-Smale condition
with a given complete system of surfaces of section. 
Let $\Kfix$ be the set of non rotating boundary orbits
let 
$$
\W=(W^s(\Kfix)\cup W^u(\Kfix))\setminus \Kfix.
$$ 
\indent 
Let $\ga$ be a hyperbolic closed orbit of the Reeb flow of $(N,\la)$.
Let $Q$ be a separatrix of $\ga$. 
If 
$\ov Q\cap \W\ne \emptyset$, then
all the separatrices of  $\ga$ have homoclinics. 
\end{Proposition}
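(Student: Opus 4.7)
The plan is to reduce Proposition~\ref{PQW} to Lemma~\ref{L15} applied with $\a:=\ga$ and a suitable $\be\in\Kfix$. Since Proposition~\ref{p17} already supplies that every $\be\in\Kfix$ has homoclinic orbits, what remains is to locate $\be\in\Kfix$ verifying the containment hypothesis $cl(W^s(\be)\cup W^u(\be))\subset\ov Q$ required by Lemma~\ref{L15}.

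Using $\ov Q\cap\W\neq\emptyset$, I first pick a point $p\in\ov Q\cap(W^{s,u}(\be)\setminus\be)$ for some $\be\in\Kfix$. After interchanging the roles of $W^s$ and $W^u$ if necessary, I assume $p\in W^s(\be)$, the other case being symmetric in time. Since $\ov Q$ is closed and invariant under the Reeb flow and $\psi_t(p)\to\be$ as $t\to+\infty$, the whole orbit $\be$ lies in $\ov Q$.

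The key step, and the main obstacle, is to upgrade $\be\subset\ov Q$ to $W^u(\be)\subset\ov Q$. I choose $q_n\in Q$ with $q_n\to p$ and analyze them inside a linearizing tubular neighborhood $U$ of $\be$. If some $q_n$ already lies on $W^s(\be)$, then $Q\cap W^s(\be)\neq\emptyset$; this intersection is transversal by Kupka-Smale condition~\eqref{ks2}, and the classical $\la$-lemma for flows gives $W^u(\be)\subset\ov{W^u(\ga)}$, which equals $\ov Q$ by Theorem~\ref{TB}.\eqref{B1}. Otherwise each $q_n$ is off $W^s(\be)$, so its forward orbit enters $U$ close to $W^s_\e(\be)$, spends a transit time $T_n\to+\infty$ inside $U$, and exits close to $W^u_\e(\be)$ at $r_n:=\psi_{T_n}(q_n)\in Q$. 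In linearizing coordinates $(x,y,\th)$ with $\be=\{x=y=0\}$, the angular component of $r_n$ has the form $\th_n+T_n/T_\be$ while $T_n=\ln(\e/y_n)\to+\infty$; as $n$ varies (and $y_n$ ranges along a continuous branch of the 1-dimensional trace $Q\cap D$ in a transversal disk $D$ through $p$), this angular coordinate distributes densely around $S^1$, so the exit points accumulate on a full circle in $W^u_\e(\be)$ parallel to $\be$. Flow-invariance of $\ov Q$ then propagates this circle along the separatrix to yield $W^u(\be)\subset\ov Q$.

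Once $W^u(\be)\subset\ov Q$, Theorem~\ref{TB}.\eqref{B1} applied to $\be$, already established in~\S\ref{ss1}, gives $\ov{W^s(\be)}=\ov{W^u(\be)}\subset\ov Q$, so the containment hypothesis of Lemma~\ref{L15} is met. Combined with the fact that $\be\in\Kfix$ has homoclinics by Proposition~\ref{p17}, Lemma~\ref{L15} concludes that every separatrix of $\ga$ has homoclinic orbits.
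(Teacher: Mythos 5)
Your overall skeleton is the same as the paper's: locate $\be\in\Kfix$ with $cl(W^s(\be)\cup W^u(\be))\subset \ov Q$, invoke Proposition~\ref{p17} to know $\be$ has homoclinics, and conclude with Lemma~\ref{L15} and Theorem~\ref{TB}.\eqref{B1}. The gap is in the key step, the passage from ``$\ov Q$ meets $W^{s}(\be)\setminus\be$'' to ``a whole component of $W^{u}(\be)\setminus\be$ (hence, by Theorem~\ref{TB}.\eqref{B1}, all of $cl(W^s(\be)\cup W^u(\be))$) lies in $\ov Q$''. Your equidistribution claim is unjustified and is in fact false as stated. In the linearized model the exit angle of $q_n$ is $\th_n+\tfrac{1}{\la}\ln(\e/|x_n|) \pmod{T_\be}$, where $x_n$ is the distance of $q_n$ to $W^s_{loc}(\be)$, and there is no reason for these values to be dense in the circle: for instance if $x_n=\e\,e^{-\la n T_\be}$ all exit points land on a single orbit segment of $W^u_{loc}(\be)$. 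Nor does ``letting $q$ range along a continuous branch of $Q\cap D$'' help: a branch of $Q\cap D$ that does not cross $W^s(\be)\cap D$ (if it did, you would be in your first case) has distance to $W^s(\be)$ bounded below by a positive constant, so it produces only a bounded range of transit times and hence only a short arc of exit angles; unbounded transit times force you to use infinitely many distinct branches, whose exit angles you do not control. All you can legitimately extract from this local analysis is one point of $W^u(\be)\setminus\be$ in $\ov Q$ (plus its orbit, by invariance), which is not enough for Lemma~\ref{L15}.

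The missing ingredient is precisely where the paper uses area preservation: it passes to the first return map $f$ of a Birkhoff section $\cB$ (Theorem~\ref{CM2}), takes $K=\ov{L_p}$ for a component $L_p$ of $Q\cap\cB$, and applies the accumulation lemma (Theorem~\ref{ACL}) to conclude that an interior branch $L_q$ of the saddle point of $f$ on $\be\cap\cB$ satisfying $L_q\cap K\ne\emptyset$ must be entirely contained in $K$; this upgrades one accumulation point to a whole component of $W^{s,u}(\be)\setminus\be$ inside $\ov Q$, after which Theorem~\ref{TB}.\eqref{B1} gives the containment needed for Lemma~\ref{L15}. Your argument nowhere uses $d\la$-invariance at this step, and the statement you are trying to prove there is false for general (non-conservative) flows, so some tool of this kind cannot be avoided.
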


\begin{proof}\quad

Assume that $Q\subset W^u(\ga)$, the case $Q\subset W^s(\ga)$ is similar.

By theorem~\ref{CM2} there is a Birkhoff section $\cB$ 
for the Kupka-Smale Reeb flow of $(N,\la)$.
Let $f$ be the first return map of the Reeb flow to $\cB$.

In section~\ref{bdrymap} we show that 
in case $\ga\subset \partial \cB$ is a hyperbolic periodic orbit, 
then the connected components of $Q\cap\cB$ are interior branches of
saddle points in $\ga$ for the return map $f$ to $\cB$.
Choose a connected component $L_p$ of $Q\cap \cB$.
Then $L_p$ is a branch of a periodic point $p\in\cB$ of $f$,
possibly at the boundary $p\in\partial \cB$ if $\ga\subset\partial \cB$.
Let $n$ be the minimal period of $L_p$, $f^n(L_p)=L_p$.
In particular $f^n(p)=p$.
Then $L_p$ is a branch of $W^u(p,f^n)$.
And $K=\ov L_p$ is a compact $f^n$-invariant subset of $\cB$.

Let $\ka\in \Kfix$ be a non rotating boundary
orbit\footnote{In lemma~3.3 in \cite{CM2} an argument of Fried is used to show that, since 
by proposition~\ref{p17}
an  orbit $\ka\in\Kfix$ has 
homoclinics in all its branches, one can obtain a Birkhoff section $\cB$ which intersects $\ka$ in its interior.} and let $q\in\ka\cap\cB$ be a (saddle) periodic point for $f$.
 Choose a multiple $m$ of $n$ such that 
$f^m(q)=q$, then $\{p,q\}\subset\Fix(f^m)$ and $f^m(K)=K$.
We will apply  the  accumulation lemma~\ref{ACL} to  $f^m$  and the compact $f^m$-invariant set $K$.
Let $L_q\subset \interior(\cB)$ be an interior branch of $q$, i.e. a connected component of $W^\tau(q,f^m)\setminus\{q\}$,
$\tau\in\{s,u\}$, which is also a connected component of $W^\tau(\ka)\cap \cB$.

By the accumulation lemma~\ref{ACL},
$$
L_q\cap K\ne\emptyset \qquad\then\qquad L_q\subset K.
$$
This implies in the Reeb flow that
$$
(W^\tau(\ka)\setminus \ka)\cap \ov Q\ne \emptyset \qquad \then \qquad 
(\text{a component of }W^\tau(\ka)\setminus\ka)\subset \ov Q.
$$
By item~\eqref{B1} of theorem~\ref{TB} we have that $\ov W^s(\ka)=\ov W^u(\ka)=\ov W^\tau(\ka)$ is also the
closure of any component  of $W^\tau(\ka)\setminus\ka$,
therefore we get
\begin{equation}\label{chkt}
\exists \tau\in\{s,u\} \qquad
\ov Q \cap (W^\tau(\ka)\setminus \ka)\ne\emptyset \qquad\then\qquad
cl(W^s(\ka)\cup W^u(\ka))\subset \ov Q.
\end{equation}

Suppose that $\ov Q\cap \W\ne\emptyset$.
Then there is $\ka\in \Kfix$ and $\tau\in\{s,u\}$ such that 
\linebreak
$\ov Q\cap (W^\tau(\ka)\setminus\ka)\ne\emptyset$.
By~\eqref{chkt}, 
$$
cl(W^s(\ka)\cup W^u(\ka))\subset \ov Q.
$$ 
By proposition~\ref{p17} we have that $\ka$ has transversal homoclinics.
Then by lemma~\ref{L15}
all the separatrices of $\ga$ have homoclinics.

\end{proof}

Let $S$ be a component of a complete system of surfaces of section
 for a closed contact 3-manifold $(N,\la)$. 
 Define the first return times $\tau_\pm$ to $S$ 
 and the first return maps $f^{\pm 1}$ as
 \begin{gather}
 \tau_+:\interior(S)\to ]0,+\infty[\cup\{+\infty\} 
 \quad\text{ and }\quad
 \tau_-:\interior(S)\to ]-\!\infty,0[\cup\{-\infty\}
 \quad\text{ by}
 \notag
 \\
 \label{etaupm}
 \tau_{\pm}(z) :=
 \pm \inf\{t>0: \phi_{\pm t}(x)\in S\,\}. 
 \\
\label{eff}
 f(x) :=\phi_{\tau_+(x)}(x),\qquad  f^{-1}(x):=\phi_{\tau_-(x)}(x).
 \end{gather}
 By the implicit function theorem $f$ and $f^{-1}$ are defined in the 
 open subsets $[\tau_+<+\infty]$ and $[\tau_->-\infty]$  of $\interior(S)$ 
 respectively.

In section~\ref{boundary} we show that $f$ and $f^{-1}$ extend 
to a neighborhood of $\Krot\cap \partial S$ as in figure~\ref{boundary}. 
All the periodic points for $f^{\pm 1}$ in any 
$\ga\in\Krot\cap\partial S$ are of saddle type,
 their  invariant manifolds for $f^{\pm 1}$ are 
 either the intersections $W^{s,u}(\ga)\cap S$ or heteroclinic 
 connections in $\ga$. Irrationally elliptic orbits in $\partial S$
 are in  $\Krot\cap\partial S$, 
 but they have no periodic orbits for $f^{\pm 1}$.
 And $f^{\pm 1}$ are not 
 defined\footnote{In fact the natural extension of $f$ to a point $x\in\Kfix$ would be
 the whole circle of a first intersection of a  component of $W^u(\ga)\setminus\ga$, $\ga=\psi_\re(x)$, 
 with $S$. See figure~\ref{fixed}.}
 on $\Kfix\cap\partial S$.
 Let 
 \begin{equation}\label{eso1}
 S_0 = ([\tau_+<+\infty]\cap [\tau_->-\infty])\cup(\Krot\cap\partial S).
 \end{equation}
 By condition~\ref{dcsss}.\eqref{css2} the maps $\tau_\pm$ are finite in a neighborhood 
 in  $\interior(S)$ of $\Krot\cap \partial S$. 
 Then $S_0$ is an open submanifold of $S$ with compact boundary
 $\partial S_0\subset \partial S$ 
 and $f^{\pm 1}:S_0\to S$ are differentiable, area preserving
 and $f(\partial S_0)\subset \partial S_0$.

Observe that condition ~\ref{dcsss}.\eqref{css2} implies that 
$\Kfix\subset \cap_{n\in\na}\ov{[|\tau_\pm|>n]}$.
In section~\ref{bdrymap} we see that the functions $\tau_\pm$ 
can be extended to $\Krot$.
So  we use the notation
\begin{gather}
\Kfix\subset [\tau_\pm=\pm\infty],
\qquad
\Krot\subset [|\tau_\pm|<\infty],
\label{tpm2}\\
S_0=[\tau_+<+\infty]\cap[\tau_->-\infty].
\label{eso}
\end{gather}

\pagebreak

\begin{Lemma}\label{Lnti}
\begin{gather}
\forall \e>0\quad \exists N\in\na\quad
x\in\interior(S)\;\;\&\;\;
N\le |\tau_\pm(x)|<\infty
\quad\then\quad
d(x, [|\tau_\pm|=\infty])<\e.
\end{gather}
\end{Lemma}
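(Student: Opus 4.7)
The plan is to argue by contradiction using compactness of $S$ together with the three defining conditions of a complete system of surfaces of section. Suppose the conclusion fails for the $+$ case (the $-$ case is identical by time reversal): there exist $\e_0>0$ and a sequence $x_n\in\interior(S)$ with $n\le\tau_+(x_n)<\infty$ and $d(x_n,[|\tau_+|=\infty])\ge\e_0$. Since $S$ is compact, pass to a subsequence (not renamed) with $x_n\to x_\infty\in S$. By the lower semicontinuity of distance, $d(x_\infty,[|\tau_+|=\infty])\ge\e_0$, so in particular $x_\infty\notin[\tau_+=+\infty]$, and by \eqref{tpm2} this means $x_\infty\notin\Kfix$.

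Now split into cases according to where $x_\infty$ lies. If $x_\infty\in\interior(S)$, then by transversality of $\interior(S)$ to the Reeb vector field (condition~\ref{dcsss}.\eqref{css1}) and the implicit function theorem, $\tau_+$ is continuous on the open set $\interior(S)\cap[\tau_+<+\infty]$; since $x_\infty$ is not in $[\tau_+=+\infty]$, it must lie in this open set, giving $\tau_+(x_n)\to\tau_+(x_\infty)<\infty$, which contradicts $\tau_+(x_n)\ge n$. Otherwise $x_\infty\in\partial S=\Kfix\cup\Krot$. We have already ruled out $x_\infty\in\Kfix$. So $x_\infty\in\Krot$, and by condition~\ref{dcsss}.\eqref{css2} there is a neighborhood $N_\gamma\subset\Sigma_i$ of the connected component $\gamma\subset\Krot$ containing $x_\infty$ and a uniform $\ell>0$ such that $\tau_+(z)<\ell$ for every $z\in N_\gamma\cap\interior(S)$. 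For $n$ large enough $x_n\in N_\gamma$, yielding $\tau_+(x_n)<\ell$, again contradicting $\tau_+(x_n)\ge n$.

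This exhausts all possibilities and proves the lemma. There is no real obstacle: the argument is essentially a compactness-plus-continuity exercise, and the content is entirely encoded in the three facts that (a) on $\interior(S)$ the return time is continuous where finite, (b) on $\Kfix$ the return time is $+\infty$ (so such points cannot be limits of our sequence), and (c) near $\Krot$ the return time is uniformly bounded by $\ell$. The one point that requires a tiny bit of care is to ensure the case $x_\infty\in\interior(S)$ with $\tau_+(x_\infty)=+\infty$ is handled, but this is immediate from $d(x_\infty,[|\tau_+|=\infty])\ge\e_0>0$.
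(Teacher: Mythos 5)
Your proof is correct, but it takes a genuinely different route from the paper's. The paper argues directly rather than by contradiction: it sets $A_n=[\tau_+<n]$, notes these form an increasing family of open sets with union $A_\infty=[\tau_+<\infty]$, observes that $K_\e=A_\infty\setminus B(\partial A_\infty,\tfrac\e2)$ is compact and hence contained in a single $A_N$, and concludes since $\partial A_\infty\subset[\tau_+=\infty]$ (Poincar\'e recurrence is invoked only to identify $\partial A_\infty$ with $[\tau_+=\infty]$ exactly). That argument uses nothing beyond compactness and the openness of the sublevel sets $[\tau_+<n]$, i.e.\ upper semicontinuity of the return time; it never examines where a bad limit point sits and never touches the structure of the complete system near $\Krot$. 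Your sequential-compactness argument instead trichotomizes the limit $x_\infty$ as interior, in $\Kfix$, or in $\Krot$, and dispatches each case with a different structural input: continuity of $\tau_+$ on $\interior(S)\cap[\tau_+<\infty]$, the inclusion $\Kfix\subset[\tau_+=\infty]$ from~\eqref{tpm2}, and the uniform bound $\ell$ of condition~\ref{dcsss}.\eqref{css2}. Both are valid; the paper's is shorter and needs less (in your interior case you appeal to full continuity of $\tau_+$, whereas the implicit function theorem most directly gives upper semicontinuity --- which, note, already suffices for your contradiction, since it yields $\limsup_n\tau_+(x_n)\le\tau_+(x_\infty)<\infty$), while yours has the merit of making explicit which hypotheses of the complete system enter and where.
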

\begin{proof}\quad

We only prove it for $\tau_+$.
For $n\in\na\cup\{+\infty\}$ let  $A_n:=[\tau_+<n]$.
Then $A_n$ is an increasing family of open sets in the closure $cl(S)$
with $A_\infty=\cup_{n\in\na}A_n$. 
For $\de>0$ let
$B(\partial A_\infty,\de):=\{\, x\in S:d(x,\partial A_\infty)<\de\,\}$.
Observe that $\partial A_\infty =[\tau_+=\infty]$ in $cl(S)$.
Indeed, by Poincar\'e recurrence theorem, $A_\infty$
has total measure in $S$, then $\Kfix\subset \partial A_\infty$.
It is enough to prove that 
\begin{equation}\label{eke}
\forall \e>0 \quad \exists N\in\na\qquad
A_\infty\subset A_N\cup B(\partial A_\infty,\e).
\end{equation}
Let $K_\e:= A_\infty\setminus B(\partial A_\infty,\frac \e2)$.
Then $K_\e$ is compact and $\{A_n\}_{n\in\na}$ is an open cover of $K_\e$.
Since the family $\{A_n\}$ is increasing, there is $N\in\na$
such that $K_\e\subset A_N$.
Then $A_\infty\subset K_\e\cup B(\partial A_\infty,\e)\subset A_N\cup B(\partial A_\infty,\e)$.

\end{proof}

\begin{Proposition}[M. Mazzucchelli]\label{lma}\quad

 Let $N$ be a compact 3-manifold with a flow $\psi_t$.
 Let $\bSi$ be a finite union of connected surfaces of section
 and $K$ a finite collection of
 hyperbolic periodic orbits in $\partial \bSi$.
  Suppose that 
 \begin{enumerate}[\quad(a)]
 \item\label{lma1} Every orbit of $\psi$ intersects $\bSi$.
 \item\label{lma2}   $z\in N\quad\&\quad \psi_{[0,+\infty[}(z)\cap \bSi=\emptyset \qquad\then\qquad z\in W^s(K)$.
 \end{enumerate}
 
 Let $\Si_1$ be a connected component of $\interior( \bSi)$.
 Let $\tau:\Si_1\to ]0,+\infty[\cup\{+\infty\}$ be the first return time to the component $\Si_1$,
 i.e.
$$
 \tau(z):=\inf\{\, t>0\;|\;\psi_t(z)\in \Si_1\}. 
 $$
 Let $\a:[0,1[\to \Si_1$ be continuous and  suppose that 
 \begin{enumerate}[(i)\quad]
 \item $\forall s\in[0,1[\quad \tau(\a(s))<+\infty$.
 \item $\exists \{s_n\}_{n\in\na}\subset [0,1[ \quad
 \lim\a(s_n)=w\in\interior(\Si_1), \quad \tau(w)=+\infty.$
 \end{enumerate}
 Then $w\in W^s(K)$.
\end{Proposition}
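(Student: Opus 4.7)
The plan is to argue by contradiction: assume $w\notin W^s(K)$ and derive a contradiction.

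By continuity of $\a$, the indices $s_n$ necessarily satisfy $s_n\to 1$: otherwise a subsequential limit $s^\ast<1$ would force $w=\a(s^\ast)$ with $\tau(w)<+\infty$ by hypothesis~(i), contradicting~(ii). Next I would verify that $\tau(\a(s_n))\to+\infty$. Since $\Si_1$ is transverse to $\psi$ and $w\in\interior(\Si_1)$ lies off every periodic orbit in $\partial\bSi$, the return time $\tau$ is continuous on its finiteness locus and locally bounded below away from zero near $w$. Thus if some subsequence of $\tau(\a(s_n))$ converged to a finite $T>0$, continuity of the flow would give $\psi_T(w)\in\ov{\Si_1}$, and since $w$'s orbit avoids the boundary periodic orbits this limit point would in fact lie in $\Si_1$, contradicting $\tau(w)=+\infty$.

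Now I iterate hypothesis~(b): the forward orbit of $w$ must meet $\bSi$ at an infinite sequence of positive times $0<t_1<t_2<\cdots\to+\infty$, since otherwise (b) applied to $\psi_T(w)$ for large $T$ would give $w\in W^s(K)$. Set $p_k:=\psi_{t_k}(w)\in\Si_{j_k}$; because $\tau(w)=+\infty$ we have $j_k\neq 1$ for every $k$, and by the interior argument above each $p_k$ lies in $\interior(\Si_{j_k})$ with transverse crossing. Continuity of the flow together with transversality then guarantees that for every fixed $k$ and every large $n$, the orbit of $\a(s_n)$ also crosses $\Si_{j_k}$ transversely near $p_k$; and since $\tau_n:=\tau(\a(s_n))\to+\infty$, the number of such shadowed crossings executed by $\a(s_n)$'s orbit before its first return to $\Si_1$ tends to infinity with $n$.

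The main obstacle is converting this shadowing into the desired contradiction; this is the technical heart of the argument, paralleling Proposition~2.1 of~\cite{CM1}. My plan is to extract along a subsequence a limit $\psi_{\tau_n}(\a(s_n))\to z\in\ov{\Si_1}$ and study the long orbit arcs $\ga_n:[0,\tau_n]\to N$ given by $\ga_n(t)=\psi_t(\a(s_n))$, with $\ga_n(0)\to w$ and $\ga_n(\tau_n)\to z$. On any compact time window one has $\ga_n(t)\to\psi_t(w)$ and, reversing time, $\ga_n(\tau_n-s)\to\psi_{-s}(z)$. Compactness of $N$ feeds the accumulation points of the $p_k$ into $\om(w)$, and the transversality argument from the first paragraph rules out $\om(w)\cap\Si_1\neq\emptyset$. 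Iterated applications of (a) and (b) on the $\psi$-invariant set $\om(w)$, together with the hyperbolicity of $K$, should pin $\om(w)\subset K$: any candidate orbit in $\om(w)$ either fails to return to $\bSi$ forward, forcing it (by (b)) into $W^s(K)$, or recurs transversely to some $\Si_{j}$, in which case the same transversality that excluded $\Si_1$ combined with closedness of $\om(w)$ under the flow localizes its limit to $\partial\bSi$, hence to a hyperbolic orbit in $K$. Hyperbolic stability then yields $w\in W^s(K)$, the desired contradiction, completing the proof.
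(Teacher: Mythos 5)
Your setup is on the right track --- arguing by contradiction, extracting infinitely many transverse crossings $p_k=\psi_{t_k}(w)\in\bSi$ of the forward orbit of $w$ from hypothesis (b), and shadowing them by the orbits of the $\a(s_n)$ --- but the finish is where the proof actually lives, and your proposed finish does not work. You try to conclude via $\om(w)\subset K$, yet nothing in the hypotheses forces this: under the contradiction hypothesis the forward orbit of $w$ meets $\bSi$ infinitely often, so hypothesis (b) never applies to it or to its forward iterates, and $\om(w)$ is perfectly free to meet the interiors of components $\Si_j$ with $j\ne 1$ (your transversality argument excludes only $\interior(\Si_1)$, because $\tau$ is the return time to that one component). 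Indeed your last paragraph makes no essential use of the curve $\a$, and the conclusion is false without it: a point $w$ whose forward orbit recurs to some $\Si_2$ but never to $\Si_1$ satisfies (a), (b) and $\tau(w)=+\infty$ while lying outside $W^s(K)$. The curve $\a$ is what must produce the contradiction.

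The paper's key device, which your second paragraph comes within one observation of, is the crossing count $k(s)=\#\{\,t\in[0,\tau(\a(s))]:\psi_t(\a(s))\in\bSi\,\}$: by transversality and the implicit function theorem it is locally constant in $s$, hence constant $=k_0$ on the connected interval $[0,1[$. If $w\notin W^s(K)$, choose $T$ so that $\psi_{[0,T[}(w)$ crosses $\bSi$ at least $k_0+1$ times; the same holds for every $x$ in a neighborhood $U$ of $w$, which forces $\tau(\a(s))\le T$ whenever $\a(s)\in U$ (otherwise $k(s)\ge k_0+1$). A convergent subsequence of the bounded sequence $\tau(\a(s_n))$ then yields $\tau(w)<+\infty$, the contradiction. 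Note this runs opposite to your first paragraph: rather than proving $\tau(\a(s_n))\to+\infty$, the paper derives a uniform upper bound on it from the contradiction hypothesis. Your own observation that the number of shadowed crossings before the first return to $\Si_1$ tends to infinity with $n$ would equally finish the proof once combined with the constancy of $k(s)$ --- but that constancy is precisely the missing ingredient.
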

\begin{proof}
Let 
$$
k(s)=\#\{\, t\in[0,\tau(\a(s))]: \psi_t(\a(s))\in\bSi\,\}.
$$
By the implicit function theorem $k(s)=k_0$ is constant in $s\in [0,1[$.

Suppose by contradiction that $w\notin W^s(K)$.
Then $\psi_t(w)\notin W^s(K)$ for all $t>0$.
Hypothesis \eqref{lma2} then implies that 
$\psi_{[0,+\infty[}(w)\cap \bSi$ is infinite.
Let $T>0$ be such that $\psi_{[0,T[}(w)$ intersects $(k_0+1)$ times the surface $\bSi$. 
By the implicit function theorem there 
is a neighborhood $U$ of $w$ in $\interior(\Si_1)$
such that for each $x\in U$, the curve 
$\psi_{[0,T[}(x)$ intersects $(k_0+1)$-times
 $\bSi$.
 Therefore 
 $\tau(\a(s))\le T$ whenever $\a(s)\in U$.
 
 Thus $\tau(\a(s_n))\le T$ for $n$ large enough.
 There is a subsequence $s_{n_k}$ such that 
 \linebreak
 $0<\tau_1:=\lim_k\tau(\a(s_{n_k}))\le T$ exists.
 Then
 $$
 \psi_{\tau_1}(w)=\lim_k\psi_{\tau(\a(s_{n_k}))}(\a(s_{n_k}))\in\Si_1.
 $$
 Therefore $\tau(w)\le \tau_1<+\infty$. A contradiction.

\end{proof}

The previous results will allow us to obtain homoclinics 
for branches of periodic points whose closure is not included in $S_0$.
For the remaining case we will use theorem~\ref{toc1}.

We remark that the proof of existence of homoclinic orbits in~\cite{OC1}, once the auto accumulation
of invariant manifolds is known, only uses the dynamics of the map in a neighborhood of the
 invariant manifolds.
We will use the following

\begin{Theorem}[Oliveira, Contreras~\cite{OC1}, corollary~4.10]
\label{toc1}\quad

Let $S$ be a compact connected orientable surface with boundary. 
Let $S_0\subset S$ be a submanifold with compact boundary 
$\partial S_0\subset \partial S$
 and let $f,f^{-1}:S_0\to S$
be an orientation preserving and area preserving homeomorphism of $S_0$ onto
open subsets $fS_0$, $f^{-1}S_0$ of $S$ with $f(\partial S_0)\subset \partial S_0$.

\begin{enumerate}
\item 
Let $p\in S_0-\partial S$ be a periodic point of $f$ of saddle type.
Assume that the branches of $p$ have closure
included in $S_0$.
Assume also that each branch of $p$ accumulates on both
of its adjacent sectors and that all the branches of $p$
have the same closure in $S$.
If in addition $S$ has genus 0 or 1,
then the four branches of $p$ have homoclinic points.

\item
Let $C$ be a connected component of $\partial S_0$ and suppose that 
all the periodic points $p_1,\ldots,p_{2n}$ of $f$ in $C$
are of saddle type.
Let $L_i$ be the branch of $p_i$ contained in
$S-\partial S$.
Assume that for every $i$, $L_i$ is not a connection and
$cl_SL_i=cl_SL_j\subset S_0$ for every pair $(i,j)$.

If in addition $S$ has genus 0, then every pair $L_i$, $L_j$ of stable and 
unstable branches intersect. 
The same happens if the genus of $S$
is 1 provided that there are at least $4$ periodic points in $C$.
\end{enumerate}

\end{Theorem}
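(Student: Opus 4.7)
The plan is to prove both items by contradiction, building from the invariant manifolds of saddle periodic points a simple closed curve $\gamma \subset S$ whose topology is incompatible, under the genus hypothesis, with the area preservation of $f$ on $S_0$. For item~(1), fix a branch $L$ of the saddle $p$, say $L \subset W^u(p)$, and assume $L$ contains no homoclinic point, i.e.\ $L \cap W^s(p) = \emptyset$. After replacing $f$ by an iterate, assume $f(p)=p$. By hypothesis the common closure $K := \overline{L} = \overline{W^u(p)} = \overline{W^s(p)}$ is a compact connected $f$-invariant subset of $S_0$, and the accumulation lemma~\ref{ACL} applies to any branch of a saddle that enters $K$. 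Working in a linearizing chart at $p$ from Definition~\ref{dhyp}, use the hypothesis that $L$ accumulates on both of its adjacent sectors to pick a long fundamental arc $\delta \subset L$ whose terminal iterate re-enters a small rectangle $R$ around $p$; the assumption $L \cap W^s(p) = \emptyset$ guarantees that $\delta$ does not cross the local stable separatrix inside $R$. Cap $\delta$ off by a short transverse segment in $R$ to obtain a simple closed curve $\gamma \subset S$.

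Next, analyze the topology and area of $\gamma$. If $\gamma$ bounds a disk in $S$, apply Poincar\'e recurrence to the nested iterates of this disk under the area-preserving $f$ on $S_0$: the iterates have constant area and must accumulate, and a standard Mather-style argument on the limit configuration forces a transverse intersection in $W^u(p) \cap W^s(p)$, contradicting the no-homoclinic assumption. Hence $\gamma$ is essential. Repeating the construction for the opposite unstable branch yields an essential simple loop $\gamma'$, and on a surface of genus $0$ or $1$ with boundary the isotopy classes of essential simple loops are restricted enough that $\gamma$ and $\gamma'$ must be freely homotopic up to orientation; this forces their closures inside $K$ to intersect, producing a homoclinic point on $L$, again a contradiction. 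The four branches of $p$ each yield a homoclinic by the same argument.

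Item~(2) is parallel: assuming $L_i \cap L_j = \emptyset$ for two opposite stable/unstable branches at $p_i,p_j \in C$, form $\gamma$ from arcs of $L_i$, $L_j$, and a short arc of $C$ joining $p_i$ to $p_j$. On genus $0$ the loop $\gamma$ bounds a planar region and the disk-recurrence argument immediately forces $L_i \cap L_j \ne \emptyset$. On genus $1$, the hypothesis of at least four periodic points on $C$ is precisely what excludes the single exceptional non-trivial $H_1(S)$-class in which a stable/unstable pair could avoid each other. The principal obstacle throughout is that $f$ is only partially defined on $S_0$: one must carry out the construction of $\gamma$, its iteration, and the recurrence argument while controlling all relevant orbits to remain in $S_0$. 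This is why the hypothesis $\overline{L} \subset S_0$ (respectively $\overline{L_i} = \overline{L_j} \subset S_0$) is essential, and why the partial-map version of Lemma~\ref{ACL} is invoked at the step that transfers the topological information about $\gamma$ into intersection properties of branches.
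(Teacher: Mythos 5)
The paper does not actually prove Theorem~\ref{toc1}: it is imported from \cite{OC1} (corollary~4.10 there, the periodic-point version of theorem~4.4), and the text only remarks that that proof can be read independently of the rest of \cite{OC1}. So the comparison is between your sketch and the argument in \cite{OC1}. Your overall strategy --- close up arcs of invariant manifolds into simple closed curves and play the topology of a genus $\le 1$ surface against area preservation --- is indeed the Pixton--Oliveira--Mather line of argument that \cite{OC1} follows, and your closing remarks about why $\ov L\subset S_0$ and the partial-map version of the accumulation lemma are needed are on target.

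However, the sketch has genuine gaps at exactly the two places where the theorem is hard. First, in the ``essential curve'' case you assert that on a surface of genus $0$ or $1$ with boundary two essential simple closed curves must be freely homotopic, and that this forces their closures to intersect. Both claims are false: a pair of pants (genus $0$) carries three pairwise disjoint, pairwise non-homotopic essential simple closed curves; a one-holed torus carries essential simple closed curves of every slope; and two disjoint freely homotopic curves always exist (they cobound an annulus), so homotopy would not force intersection anyway. The actual role of the genus restriction is homological rather than a count of isotopy classes --- on genus $0$ every simple closed curve separates, so one always obtains a complementary region of finite area to which the recurrence argument applies, while on genus $1$ one needs an intersection-number computation in $H_1$, and this is precisely where the ``at least $4$ periodic points in $C$'' hypothesis of item (2) enters; your sketch supplies none of this. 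Second, the ``disk case'' is not actually argued: the iterates $f^n(D)$ are not nested in general, and since your curve $\gamma$ is built from the unstable branch alone it is unclear how an intersection with $W^s(p)$ is ever produced. The classical construction joins an arc of $W^u(p)$ to an arc of $W^s(p)$ near $p$ --- possible precisely because $L$ accumulates on the sector adjacent to a stable branch and, by hypothesis, $\ov{W^s(p)}=\ov L$ --- and then runs a lobe-area argument on the region they bound; as written, ``a standard Mather-style argument on the limit configuration'' is a placeholder for the core of the proof rather than a step of it. The same objection applies to item (2) on genus $0$, where a simple closed curve in a planar surface with several boundary components need not bound a disk in $S$.
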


Theorem~\ref{toc1} is the version for periodic points of theorem~4.4 in~\cite{OC1}.
The proof of theorem~4.4 in~\cite{OC1} can be read independently
of the rest of the paper.

\bigskip

\noindent
{\bf Proof of item~\eqref{B3} of theorem~\ref{TB}:}

By proposition~\ref{p17} every orbit $\ga\in\Kfix$ has homoclinics in all its separatrices.

By proposition~\ref{PQW} the same happens for a periodic orbit $\ga$ if $\ov{W}^s(\ga)\cup{\ov W^u(\ga)}$ intersects 
$$
\W:=W^s(\Kfix)\cup W^u(\Kfix)\setminus \Kfix.
$$
 So assume that $\ga$ is a hyperbolic periodic orbit with $\ga\cap \ov S\ne\emptyset$ and 
\begin{equation}\label{iww}
\big ( \ov W^s(\ga)\cup \ov W^u(\ga)\big)\cap \W =\emptyset.
\end{equation}

Let $S$ be a component of the complete system.
Let $\tau_\pm:S\to \re\cup\{-\infty,+\infty\}$ be the first return times to $S$,
defined in~\eqref{etaupm},~\eqref{tpm2},
 let $S_0=[\tau_+<+\infty]\cap [\tau_->-\infty]$ be as in~\eqref{eso1}, \eqref{eso},
 and let $f,\, f^{-1}:S_0\to S$ be the extensions of the  first return maps as in \eqref{eff} 
 and \S\ref{bdrymap}.

   Since $\ga$ is a periodic orbit, $|\tau_\pm|$ are finite on $\ga\cap S$, bounded by the period of $\ga$.
   Thus $\ga\cap S\subset S_0$.
   Let $p\in S_0=\interior(S_0)\cup\partial S_0$, $p\in\ga\cap \ov S$,  be a saddle point for $f$ 
  and let $L\subset\interior(S)$ be an 
  interior branch of $p$.  Let $Q$ be the separatrix of $\ga$ which contains $L$.
  
  Suppose that $\tau_+$ is unbounded on $L$. 
  Let $L_1\subset L$ be the connected component
  of $L\cap[\tau_+<\infty]$ with $p\in L_1$.
  Then $\tau_+$ is unbounded on $L_1$.
  Let $\a:[0,1[\to L_1$ be a parametrization of $L_1$.
 Then there is a sequence $s_n\in[0,1[$
 with $\lim_n\tau_+(\a(s_n))=+\infty$. Extracting a subsequence
 we can assume that $z_0=\lim_n \a(s_n)$ exists.
 Since $[\tau_+=\infty]$ is compact, lemma~\ref{Lnti} implies that 
 $\tau_+(z_0)=\infty$. 
 But condition~\ref{dcsss}.\eqref{css2}  implies that 
 $\tau_+$ is bounded in a neighborhood of $\Krot\cap\partial S$.
 Thus $z_0\in \interior(S)\cup \Kfix$.
 If $z_0\in\interior(S)$
 then proposition~\ref{lma} implies that 
 $z_0\in W^s(\Kfix)$.
 Therefore $z_0\in\ov L\cap (W^s(\Kfix)\setminus\Kfix)\ne \emptyset$.
 This contradicts~\eqref{iww}.

      \begin{figure}
   \includegraphics[scale=.55]{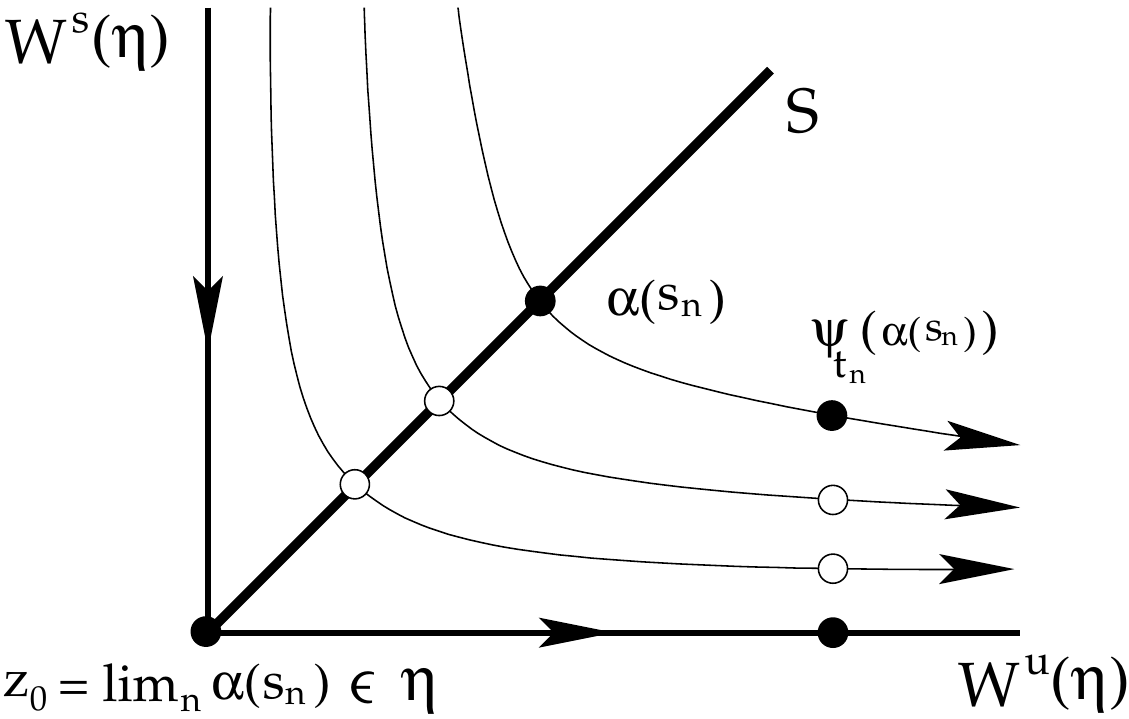}
   \caption{\footnotesize
  \newline
        When $\lim_n\a(s_n)\in\eta\subset \Kfix$,   
   the forward orbits of the $\a(s_n)$ approach $W^u(\eta)\setminus\eta$.     }
   \label{ang}
   \end{figure}

 Then $z_0=\lim_n\a(s_n)\in \Kfix\cap\partial S$.
 Let $\eta=\psi_\re(z_0)\in\Kfix$.
 The surface $S$ approaches the non rotating boundary orbit
  $\eta\subset\partial S\cap\Kfix$
 through a quadrant of $\eta$ as in figure~\ref{ang}.
  There are $t_n\ge 0$ such that $\lim_n(\psi_{t_n}(\a(s_n)))\in W^u(\eta)\setminus\eta$.
   Since $W^u(\eta)\setminus\eta$ does not contain periodic orbits, this limit is in
   $W^u(\eta)\setminus \Kfix$.
  Since $L_1\subset Q$ and $Q$ is invariant, $\psi_{t_n}(\a(s_n))\in Q$.
  Therefore $\ov{Q}\cap (W^u(\eta)\setminus\Kfix)\ne\emptyset$. This contradicts~\eqref{iww}.

 This proves that $\tau_+$ is bounded on $L$. 
 A similar\footnote{For the boundedness of $\tau_-$ we apply proposition~\ref{lma} to the inverse flow $\psi_{-t}$.} 
 proof shows that $\tau_-$ is bounded on $L$.
  
  Now assume that $\tau_\pm$ are bounded on $L$ and $\text{genus}(S)\le 1$.
  Then there is $N>0$ such that 
   $$
    \ov L\subset[\tau_+\le N]\cap[\tau_-\le N]\subset S_0,
   $$
as required in theorem~\ref{toc1}.
In order to apply theorem~\ref{toc1} we need to show
that 
\begin{enumerate}[(a)]
\item
If $p\in\interior(S)$ then each branch of $p$
accumulates on both of its adjacent sectors and
 all branches of $p$ in $\interior(S)$ have the same closure.
\item If $p\in\partial S$ (and hence $\ga=\psi_\re(p)\subset \Krot\cap \partial S$),
then all the components of $(W^s(\ga)\setminus\ga)\cap S$ and of $(W^u(\ga)\setminus\ga)\cap S$ have the same closure.
\end{enumerate}
Then corollary~\ref{c115} finishes the proof of item~\ref{B3} of theorem~\ref{TB}.

\qed

\begin{Lemma}\label{ltb3}\quad

Let $(N,\la)$ be a Kupka-Smale closed contact 3-manifold.
Let $S$ be a component of 
\linebreak
a complete systems of surfaces of section for $(N,\la)$.
Let $S_0$ be as in \eqref{eso} and let 
\linebreak
$f:S_0\cup\partial S_0\to S$ be the 
extension of the first return map to $S$ made in section~\ref{bdrymap}.
\linebreak
 Let $\ga$ be a hyperbolic closed orbit for $(N,\la)$,
$\ga\notin \Kfix$, such that 
\begin{equation}\label{swws}
S\cap(\ov W^s(\ga)\cup \ov W^u(\ga))\subset S_0.
\end{equation}
Let $p\in \ga\cap (S\cup\partial S)$ be a periodic point for $f$ 
and let $L_p\subset S\setminus\partial S$ be an interior branch of $p$.

Then $L_p$ accumulates on both of its adjacent sectors. 
\end{Lemma}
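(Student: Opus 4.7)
The plan is to verify the hypotheses of the partially defined analogue of item~\eqref{c11} of Theorem~\ref{OC1} established in~\cite{OC1,OC2}, applied to the branch $L_p$ of $f$ at the periodic point $p$. Concretely, I would check: $\ov L_p\subset S_0$; every periodic point of $f$ in $\ov L_p$ is of saddle type or irrationally elliptic; and $L_p$ is not a connection. Once these three items are in place, the accumulation statement follows at once.

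The inclusion $\ov L_p\subset S_0$ is immediate from the hypothesis. The branch $L_p$ is a connected component of $(W^u(\ga)\setminus\ga)\cap S$ or of $(W^s(\ga)\setminus\ga)\cap S$, so its closure in $S$ is contained in $S\cap(\ov W^s(\ga)\cup\ov W^u(\ga))$, which sits in $S_0$ by~\eqref{swws}. In particular all iterates $f^{\pm k}$ are defined on $\ov L_p$ and preserve the area form $d\la|_S$ on a neighborhood of $\ov L_p$, so the area preserving machinery of the accumulation lemma~\ref{ACL} applies.

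The second item is automatic from the Kupka-Smale condition~\eqref{ks1}: every periodic orbit of the Reeb flow of $(N,\la)$ has Floquet multipliers that are not roots of unity, so each periodic point of $f^{\pm 1}$ in $\ov L_p$ is either hyperbolic (and therefore of saddle type, since $f$ is area preserving) or irrationally elliptic in the sense of Definition~\ref{dhyp}.

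To rule out connections I would argue by contradiction: if $L_p$ were a whole branch shared by $p$ and a distinct periodic point $q\in\ga'\cap(S\cup\partial S)$ of $f^n$, then the Reeb saturation $\psi_\re(L_p)$ would be a $2$-dimensional open subset of $W^u(\ga)\cap W^s(\ga')$ (or the symmetric intersection). This contradicts the transversality condition~\eqref{ks2}, under which these two $2$-dimensional invariant manifolds in the $3$-manifold $N$ meet in a discrete union of Reeb orbits. With all three items verified, the partially defined analogue of item~\eqref{c11} of Theorem~\ref{OC1} yields that $L_p$ accumulates on both of its adjacent sectors. The most delicate step is handling the boundary case $p\in\Krot\cap\partial S$: there one must identify the sectors of $p$ as a periodic point of the boundary-extended return map from section~\ref{bdrymap} with the sectors coming from the Reeb flow in $N$, so that the transversality argument in the ambient $3$-manifold genuinely rules out connections for $f$ on $S$.
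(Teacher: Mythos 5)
There is a genuine gap: the result you invoke does not exist in the toolbox the paper has available. Item~\eqref{c11} of Theorem~\ref{OC1} (corollary~4.9 of~\cite{OC1}) is stated for a \emph{globally defined} area preserving homeomorphism $f:S\to S$ of a compact surface. The only partially defined results quoted from~\cite{OC2},~\cite{OC1} are the accumulation lemma~\ref{ACL} (which gives $L\subset K$ when a branch meets a compact invariant set, not self-accumulation on sectors) and Theorem~\ref{toc1} (corollary~4.10 of~\cite{OC1}), which \emph{assumes} as a hypothesis that each branch accumulates on both of its adjacent sectors rather than proving it. Lemma~\ref{ltb3} exists precisely to supply that hypothesis for the partially defined return map $f:S_0\to S$; asserting a ``partially defined analogue of item~\eqref{c11}'' is assuming the conclusion. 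Your three verifications ($\ov{L_p}\subset S_0$, the Kupka-Smale classification of periodic points, and the no-connection argument via transversality of $W^u(\ga)\pitchfork W^s(\ga')$ in $N$) are all correct, but they feed into a theorem that is not there.

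The paper's actual route is two-step and you should reproduce it. First, accumulation is established at the \emph{flow} level: by Theorem~\ref{TB}.\eqref{B1} (proved in \S\ref{ss1} using the return map to a genuine Birkhoff section from Theorem~\ref{CM2}, where $f$ \emph{is} globally defined and item~\eqref{c11} applies), the separatrix $Q\supset L_p$ of $\ga$ accumulates on both of its adjacent sectors in $(N,\la)$. Second, this is transferred to the specific branch $L_p$ of the partially defined return map: $Q\cap S$ has at most $2n$ components ($n$ the period of $p$), so \emph{some} component $L_k$ accumulates on the given sector $A$ of $p$; then $L_p\cap\ov{L_k}\ne\emptyset$, the accumulation lemma~\ref{ACL} (applied to a suitable power of $f$, which is legitimate since $\ov{L_k}\subset S_0$ by~\eqref{swws}) gives $L_p\subset\ov{L_k}$, and iterating with $f^k$ yields $\ov{L_k}\subset\ov{L_{2k}}\subset\cdots\subset\ov{L_{nk}}=\ov{L_p}$, whence $L_p$ itself accumulates on $A$. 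This bookkeeping --- identifying which component of $Q\cap S$ sees the sector $A$ and then cycling back to $L_p$, with separate treatment of the positive hyperbolic, boundary, and negative hyperbolic cases --- is the actual content of the lemma and is entirely absent from your plan.
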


 \begin{proof}\quad
 
  Let $\tau_+$ and $f$ be from~\eqref{etaupm} and~\eqref{eff}.
    The branches in $S\setminus\partial S$ 
of $p$ for $f$ are the connected components of the 
intersection of the separatrices of $\ga$ with $S$ that contain $p$ as an endpoint. 
Let $Q$ be the separatrix of $\ga$ containing the branch $L_p$.
 By item~\eqref{B1} of theorem~\ref{TB}
we know that $Q$ accumulates on both of  its adjacent sectors in $(N,\la)$.

By hypothesis~\eqref{swws}, all the branches of the $f$-orbit of $p$ in $S$
have closure included in $S_0$. The map $f:S_0\to S$ is well defined in $S_0$
and is an injective immersion. In particular $f$ is continuous in a neighborhood in $S$ of
the branches of the orbit of $p$. And every connected component of
$S\cap (W^s(\ga)\cup W^u(\ga))$ has an endpoint in an element
 of the $f$-orbit of $p$.

Let $A$ be a sector for $(f,p)$ in $S$  adjacent to $L_p$. 
Let $n$ be the minimal period of $p$, $f^n(p)=p$.
There are at most $2n$ connected components of $Q\cap S$
and they  are branches of the iterates $f^i(p)$.
At least one of these components accumulates on the sector $A$.

Suppose first that $\ga$ is a  positive hyperbolic orbit.
Then for every $i\in\Z$, $f^i(L_p)$ is the unique connected
component of $Q\cap S$ with endpoint $f^i(p)$.
Let $L_k$ be a connected component of $Q\cap S$ which accumulates
on the sector $A$.
Then $L_k$ is a branch of an $f$-periodic point $p_k\in\ga\cap S$.
There is $0\le k<n$ such that $f^k(p)=p_k$.
If $k=0$ then the lemma holds. Assume $k\ge 1$.
Since $L_k$ accumulates on the sector $A$ adjacent to $L_p$,
we have that $L_p\cap \ov{L_k}\ne\emptyset$.
The compact set $\ov{L_k}$ is invariant under $f^n$ and $\ov{L_k}\subset S_0$.
By the accumulation lemma~\ref{ACL} applied to $f^n$, $L_p\subset \ov{L_k}$.

Observe that  $f^k(L_k)=:L_{2k}$ is a connected component of $Q\cap S$
with endpoint $p_{2k}=f^k(p_k)$ and it accumulates on the sector 
$f^k(A)$ of $p_k$. Similarly $L_k\subset \ov{L_{2k}}$. And then
$L_p\subset \ov {L_k}\subset\ov{L_{2k}}$.
Inductively $L_{nk} = f^{nk}(L_p)$ is a component of $Q\cap S$ with endpoint
$p_{nk}=f^{nk}(p)=p$. Thus $L_{nk}=L_p$.
Moreover $\ov{L_k}\subset \ov{L_{2k}}\subset\cdots \subset \ov{L_{nk}}=\ov{L_p}$.
Therefore $L_p$ accumulates on the sector $A$.

Suppose  now that $p\in\ga\subset \partial S$ is a boundary $f$-periodic point.
The return map $f$ preserves the area form of $S$ and hence it preserves orientation.
This implies that $p$ is a positive hyperbolic orbit for $f$.
The orbit $\ga$ for the flow may be negative hyperbolic 
but the return map $f$ permutes the interior components of $Q\cap S$.
The previous proof of the positive hyperbolic case applies here.

Now suppose that $p\in S\setminus\partial S$ is a negative hyperbolic periodic point for $f$.
Let $n>0$ be its minimal period, $f^n(p)=p$. Let $J_k$ be a connected 
component of $Q\cap S$ which accumulates on the sector $A$. Let $p_k$ 
be the endpoint of $J_k$ and let $k\in\na$ be such that $f^k(p)=p_k$.
In the case  $L_k:=f^k(L_p)=J_k$ the same proof as in the positive hyperbolic case follows,
with $L_j:=f^j(L_p)$ and $\ov{L_k}\subset \ov{L_{2k}}\subset \cdots \subset \ov{L_{2nk}}=\ov{L_p}$.
The accumulation lemma~\ref{ACL} is applied to $f^{2n}$ which leaves the 
branches $L_{ik}$ invariant.
We iterate $2n$ times $f^k$, 
because the map $f^{2nk}$ fixes the branch $L_p$.

Suppose then that $f^k(L_p)\ne J_k$.
For $j\in\Z$, let  $L_j:=f^j(L_p)$ and $A_j:=f^j(A)$.
Let $K_j$ be the other component of $Q\cap S$ with 
endpoint $f^j(p)$. In local coordinates $K_j$ is the
branch $-L_j$ adjacent to the sector $-A_j$.
Then $J_k=K_k$ and $K_j=f^j(K_0)$.
 The branch $K_k$ accumulates on the sector $A$.
Then the branch $L_k=f^n(K_k)$ accumulates on the sector $f^n(A)=-A$,
adjacent to the branch $K_0$.
By the accumulation lemma~\ref{ACL} applied to $f^{2n}$, 
for which the branches are invariant, $L_p\subset \ov{K_k}$ and $K_p:=K_0\subset \ov{L_k}$.

Since the branch $K_k$ accumulates on the sector $A$, we have that 
the branch $K_{2k}=f^k(K_k)$ accumulates on the sector $A_{k}=f^k(A)$, adjacent to $L_k$.
And using $f^n$, the branch $L_{2k}:=f^{2k}(L_p)=f^n(K_{2k})$ accumulates on the 
sector $-A_k=f^n(A_k)$, adjacent to $K_k$. Using the accumulation lemma~\ref{ACL} we get
that
\begin{equation}\label{lkl}
L_k\subset \ov{K_{2k}}\qquad\text{ and }\qquad K_k\subset \ov{L_{2k}}.
\end{equation}
Applying $f^j$ to the inclusions in~\eqref{lkl} and using that $L_j=f^j(L_0)$,
$K_j=f^j(K_0)$ we have that 
$$
L_p\subset \ov{K_k}\subset \ov{L_{2k}}\subset \ov{K_{3k}}\subset \ov{L_{4k}}\subset\cdots
$$
In the iterate $2nk$ we have that
$$
K_k\subset \ov{L_{2nk}}=f^{2nk}(\ov {L_p})=\ov{L_p},
$$
 then $L_p$ accumulates on the sector $A$.
 
  \end{proof}

 The hypothesis in theorem~\ref{toc1} asks for more than lemma~\ref{ltb3}, namely

 \begin{Corollary}\label{c115}\quad

Let $p\in S_0\cup\partial S_0$ 
be a saddle point for the extension  $f:S_0\to S$  of the return map.
Assume that 
\begin{equation}\label{c115e}
\ga:=\psi_\re(p), \qquad
\ov{S\cap W^s(\ga)}\subset S_0
\qquad\&\qquad
\ov{S\cap W^u(\ga)}\subset S_0.
\end{equation}
Then
\begin{enumerate}
\item\label{c1151} If $p\in S\setminus\partial S$, all the branches  of $p$
have the same closure and 
accumulate on all the sectors of $p$.
\item\label{c1152}
If $p\in\partial S$, $\ga=\psi_\re(p)$, $p_1,\ldots,p_{2m}$ are the periodic points 
of $f$ in $\ga\subset\partial S_0$ and $L_i\subset S\setminus \partial S$ 
is the interior branch of $p_i$, then $\ov{L_i}=\ov{L_j}$ for every $(i,j)$.
\end{enumerate}
\end{Corollary}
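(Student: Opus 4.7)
The plan is to reduce to the arguments behind items~\eqref{c12} and~\eqref{c14} of Theorem~\ref{OC1}, which were stated for globally defined area preserving homeomorphisms, and adapt them to the partially defined first return map $f:S_0\to S$ using the partially defined version of the accumulation lemma (Theorem~\ref{ACL}, from~\cite[Thm.~4.3]{OC2}). The hypothesis~\eqref{c115e} is exactly what makes this adaptation work: it guarantees that every compact $f^n$-invariant closure $\ov{L_i}$ of an interior branch sits inside the open domain $S_0$, where $f$ and its iterates are continuous and area preserving.

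For item~\eqref{c1151}, fix a common $f$-period $n$ of $p$ and its branches, and let $L$, $L'$ be two branches of $p$ sharing an adjacent sector $A$ at $p$. By Lemma~\ref{ltb3}, $L$ accumulates on $A$. The first key step is to upgrade this to the intersection property $L'\cap\ov L\ne\emptyset$ by the standard $\la$-lemma balancing at the saddle $p$ (Definition~\ref{dhyp}) that underlies the proof of Theorem~\ref{OC1}\eqref{c12}: pick $x_k\in L\cap A$ with $x_k\to p$ and iterate under $f^{\pm n}$ with an iteration count $j_k$ tuned to $d(x_k,p)$, so that $f^{\pm j_k n}(x_k)\in L$ (by $f^n$-invariance of $L$) can be made to converge to any preassigned point of the local part of $L'$ at $p$. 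With the intersection $L'\cap\ov L\ne\emptyset$ in hand, Theorem~\ref{ACL} applied to $f^n$ on $S_0$ with the compact $f^n$-invariant set $K=\ov L\subset S_0$ and the branch $L'$ of $p$ yields $L'\subset\ov L$; by symmetry $\ov L=\ov{L'}$. Running through all four adjacent pairs of branches shows that the four branches of $p$ share a common closure in $S$, which by Lemma~\ref{ltb3} accumulates on every sector of $p$.

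For item~\eqref{c1152}, the same scheme is applied to the $f$-periodic points $p_1,\ldots,p_{2m}\subset\ga\cap\partial S$ and their interior branches $L_1,\ldots,L_{2m}$, following the proof of Theorem~\ref{OC1}\eqref{c14}. Consecutive $p_i$, $p_{i+1}$ along $\ga$ are joined by a boundary arc of $\ga$ that bounds a single interior sector of $S$ at each endpoint, and this sector is an adjacent sector of $L_i$ at $p_i$ and of $L_{i+1}$ at $p_{i+1}$. Lemma~\ref{ltb3} combined with the $\la$-lemma balancing at each saddle $p_i$ and Theorem~\ref{ACL} (with the compact $f^n$-invariant sets $\ov{L_i}$ kept inside $S_0$ thanks to~\eqref{c115e}) yields, exactly as in item~\eqref{c1151}, $\ov{L_i}=\ov{L_{i+1}}$ for each consecutive pair; iterating around $\ga$ gives $\ov{L_i}=\ov{L_j}$ for every $(i,j)$.

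The hardest step, shared with the proofs of items~\eqref{c12} and~\eqref{c14} of Theorem~\ref{OC1} in~\cite{OC1}, is the local-to-global passage from accumulation of one branch on its adjacent sector (a purely local fact near the saddle) to equality of closures of distinct branches (a global statement). The local balancing at the saddle is unaffected by $f$ being only partially defined; what really requires the hypothesis~\eqref{c115e} is the application of the accumulation lemma in its partially defined form, since that is the step which propagates the local intersection to a global inclusion of branches inside $S_0$.
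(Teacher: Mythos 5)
Your proof is correct and follows essentially the same route as the paper's: Lemma~\ref{ltb3} for accumulation on the adjacent sectors, the local $\lambda$-lemma balancing at the saddle to turn that accumulation into $L'\cap\ov{L}\ne\emptyset$, and the partially defined accumulation lemma (Theorem~\ref{ACL}) — applicable precisely because hypothesis~\eqref{c115e} keeps the compact invariant closures inside $S_0$ — to upgrade the intersection to an inclusion of closures, chained along the connections around the boundary circle for item~\eqref{c1152}. The only cosmetic difference is that the paper closes the cycle of one-sided inclusions $\ov{L_0}\supset\ov{L_1}\supset\cdots\supset\ov{L_{2m}}=\ov{L_0}$ rather than arguing each consecutive equality by symmetry.
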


\begin{proof}\quad

\eqref{c1151}.
By lemma~\ref{ltb3}, it is enough to prove that the branches of $p$ in 
$S\setminus\partial S$ have the same closure.
Let $n$ be such that $f^n(p)=p$.
Let $L_0$, $L_1$ be two branches of $p$ adjacent to the same sector $A$.
Since by lemma~\ref{ltb3}, $L_0$ accumulates on the sector $A$; we have that 
$L_1\cap \ov{L_0}\ne\emptyset$.
By hypothesis $\ov{L_0}\subset S_0$.
Also $f^{2n}(L_0)=L_0$.
By the accumulation lemma~\ref{ACL} applied to $f^{2n}$, we have that 
$L_1\subset\ov{L_0}$. Iterating this argument we have that 
all branches of $p$ have the same closure.

   \begin{figure}
      \includegraphics[scale=.5]{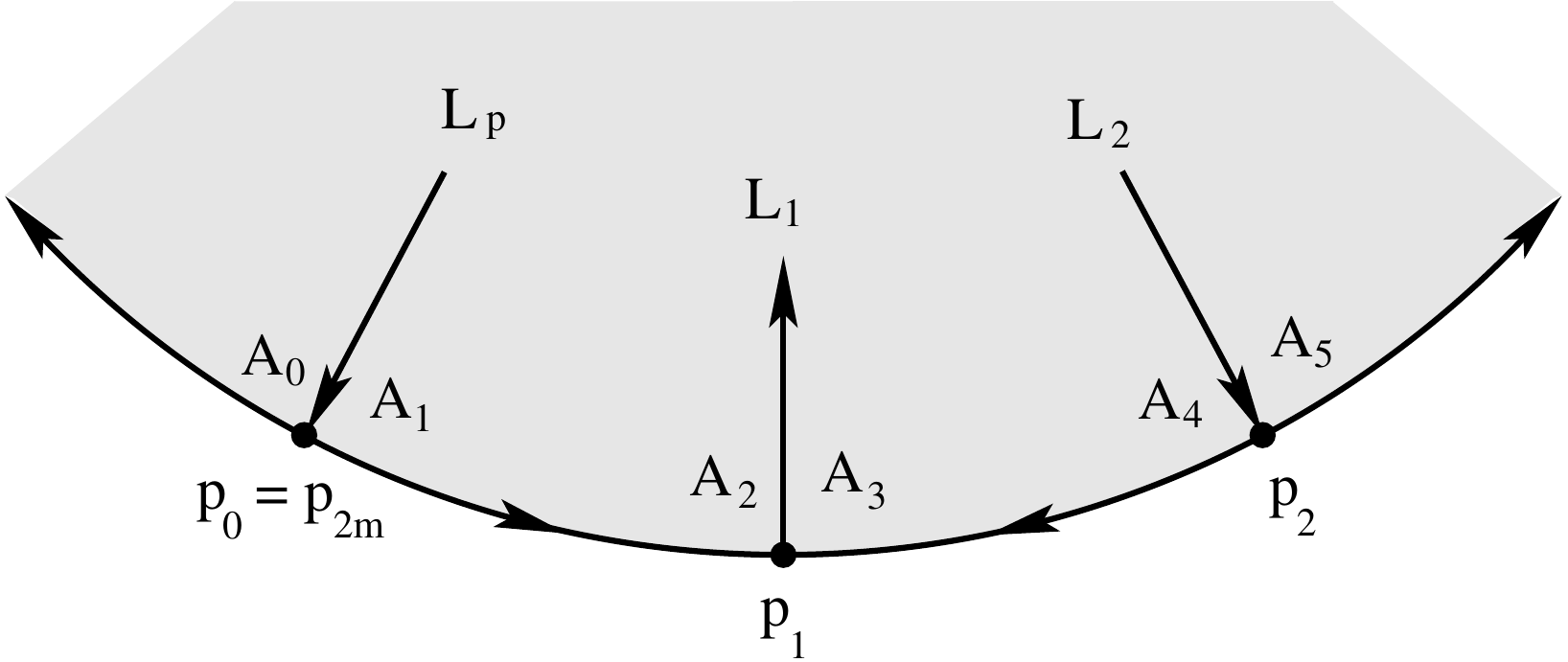}
   \caption{\footnotesize Saddle periodic points and sectors at a hyperbolic rotating boundary orbit.}
   \label{BD}
   \end{figure}

\eqref{c1152}. Let $p=p_0,p_1,\ldots, p_{2m}=p_0$ be the ordered 
 periodic points of $f$  in  $\ga\subset\partial S$, as in figure~\ref{BD}.
Let $L_p=L_0=L_{2m}$ and let $L_i$ be the branch of $p_i$ in $S\setminus \partial S$.
Then the $L_i$'s are connected components of $S\cap W^{s,u}(\ga)$. 
By \eqref{c115e}, $\ov{L_i}\subset S_0$ and 
 the $L_i$'s are all the components of $S\cap W^{s,u}(\ga)$.
Let $A_{2i},A_{2i+1}$ be the sectors adjacent to $p_i$ chosen so that $A_{2i-1}$ and $A_{2i}$
are adjacent to a connection between $p_{i-1}$ and $p_i$.

By lemma~\ref{ltb3}, $L_i$ accumulates in its adjacent sector $A_{2i+1}$.
Due to the connection, $L_i$ also accumulates on the sector $A_{2i+2}$,
 adjacent to $L_{i+1}$. Then $\ov{L_i}\cap L_{i+1}\ne\emptyset$.
 By the accumulation lemma~\ref{ACL} applied to $f^N$ where $N$ is a multiple of the periods of
 $L_i$ and $L_{i+1}$, we have that $L_{i+1}\subset \ov{L_i}$. Thus
 $$
 \ov{L_0}\supset \ov{L_1}\supset \ov{L_2}\supset\cdots \supset\ov{L_{2m}}=\ov{L_0}.
 $$
Therefore $\ov{L_i}=\ov{L_j}$ for every $(i,j)$.

\end{proof}

\bigskip

\color{black}

 \def\oth{{\ov{\theta}}}
 \def\drho{{\dot{\rho}}}
 \def\oom{{\ov{\om}}}
 \def\te{{\text{e}}}

 \newcommand{\spun}[1]{\text{span}\langle #1\rangle}

  \section{The extension to the boundary of the return map.}\label{bdrymap}

 In this section we study the extension to the boundary at a rotating boundary
  orbit of the return map to a component of a complete system of surfaces 
  of section a Reeb flow. On non rotating boundary orbits the return time is
  infinite and the return map is not defined.

 We first consider the case in which rotating boundary orbit $\Ga$ at the boundary
 of a component $\Si$ is irrationally elliptic. In that case we prove that 
 if the Floquet multipliers of the elliptic orbit are not roots of unity then
  the extension  to the boundary
 has no periodic point.

  Afterwards we deal with the case in which the periodic orbit $\Ga$
  at the boundary of the component $\Si$ is hyperbolic.
 We show that there exist a continuous extension of the Poincar\'e map
 to the boundary $\Ga$ of $\Si$. This extension is a Morse-Smale
 map in $\Ga$ with periodic points on $\Ga$ that, seen in $\Si$, are 
 saddle points. These periodic points do not correspond to other
 periodic orbits of the Reeb flow  but their {\it interior} invariant
 manifolds $W^s$, $W^u$ are the intersections of the invariant
 manifolds $W^u(\Ga)$, $W^s(\Ga)$ of the closed orbit $\Ga$ with the
 component $\Si$ and hence their intersections  with other
 invariant manifolds of the return map to $\Si$ are transversal if the Reeb flow is Kupka-Smale.

    \begin{figure}
   \includegraphics[scale=.35]{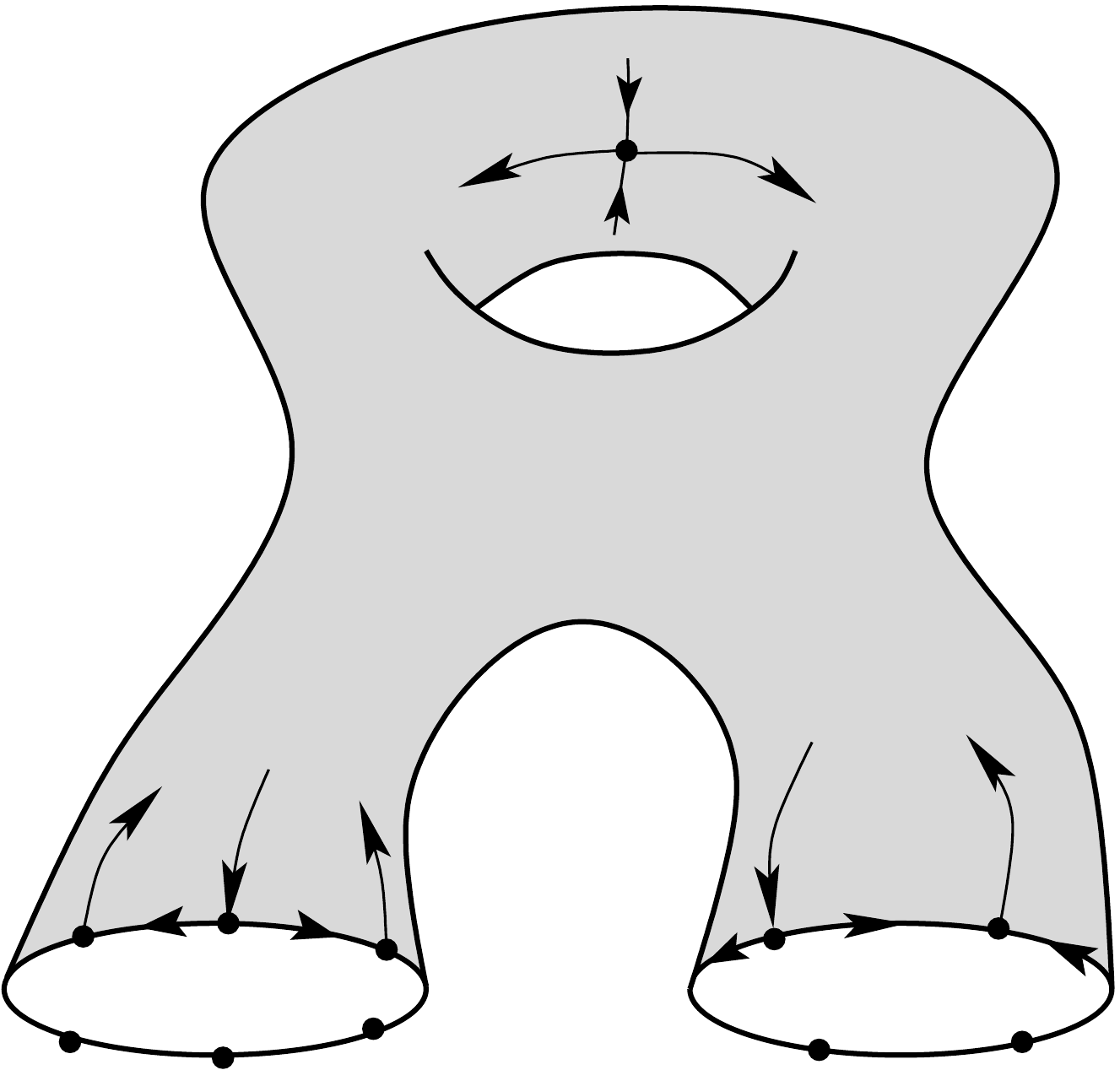}
   \caption{\footnotesize 
    The figure  shows the dynamics of the extension of the return map $P$
   to the surface of section $\Si$ with hyperbolic rotating boundary orbits. }
   \label{boundary}
   \end{figure}

 \bigskip

 \subsection{The elliptic case.}

 \begin{Proposition}\label{NPP}\quad
 
 If the binding periodic orbit $\Ga$ is elliptic and its Floquet multipliers are not roots of unity, then
 the extension of the return map to the boundary $\Ga=\partial\Si$ has no periodic points.
 \end{Proposition}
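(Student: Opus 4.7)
The plan is to blow up the elliptic boundary orbit $\Gamma$ in polar coordinates so that the question becomes one about a linear flow on the 2-torus, and then invoke the classical fact that an irrational linear flow on $T^2$ has no closed orbits.

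First I would fix normal form coordinates. Since $\Gamma$ is elliptic, its Floquet multipliers have the form $e^{\pm 2\pi i\omega}$; the hypothesis that they are not roots of unity means exactly $\omega\in\re\setminus\Q$. Choose a smooth symplectic trivialization of the contact hyperplane bundle $\xi=\ker\la$ along $\Gamma$. In polar coordinates $(t,r,\theta)\in(\re/T\Z)\times[0,\e)\times(\re/\Z)$ on a tubular neighborhood of $\Gamma$, the linearized Reeb flow rotates the $\theta$-circle with constant angular speed $2\pi\omega/T$. Replacing the zero section by its unit normal circle bundle performs a blow-up of $\Gamma$ to a torus $\cN(\Gamma)=T^2$, and the Reeb vector field $X$ extends smoothly to this blow-up. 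On $\cN(\Gamma)$, where only the linearization survives, the extension is (after reparametrization) the constant vector field $\partial_t+\omega\,\partial_\theta$.

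Next I would read off the extension of the Poincar\'e map. Condition \ref{dcsss}.\eqref{css3} says precisely that the closure of $\Si$ in the blow-up meets $\cN(\Gamma)$ in a finite embedded collection $C\subset T^2$ of smooth closed curves transversal to the extended vector field. By construction the continuous extension of the return map $P:\Si\to\Si$ to $\partial\Sigma$ is identified with the first return map $\tP:C\to C$ of the linear flow $\partial_t+\omega\,\partial_\theta$ to the transverse curves $C$. A periodic point of $\tP$ would then produce a closed orbit of this linear flow on $T^2$; but any closed orbit of $\partial_t+\omega\,\partial_\theta$ on $T^2$ forces its slope $\omega$ to be rational, contradicting the irrationality of $\omega$. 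Hence $\tP$ has no periodic points, which is the desired conclusion.

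The main obstacle I anticipate is not the linear flow argument itself but the bookkeeping of the first step: one must check carefully that the polar blow-up of a neighborhood of $\Gamma$ inside a contact 3-manifold really does extend the Reeb vector field to $\cN(\Gamma)$ as the linear model above, and that the transversality condition~\ref{dcsss}.\eqref{css3} coincides with the transversality of $C$ to this linear field (so that $\tP$ is indeed a well-defined return map conjugate to the first-return map of the linear flow). Once this identification is set up cleanly, the rest is a one-line consequence of the absence of closed orbits for irrational linear flows on $T^2$.
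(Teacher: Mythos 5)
Your proof is correct and is essentially the paper's own argument in different clothing: both identify the continuous extension of the return map to $\partial\Si=\Ga$ with the return map of the projectivized linearized Reeb flow to the asymptotic-direction curve of $\Si$ in the blown-up torus $\cN(\Ga)$, and both conclude that a periodic point would give an iterate of the linearized Poincar\'e map an invariant real line in $\xi(\Ga(t))$, hence a real eigenvalue, hence a root-of-unity Floquet multiplier. The only imprecision is your claim that the extended field on $\cN(\Ga)$ \emph{is} the constant field $\partial_t+\om\,\partial_\theta$: the time-$s$ maps $d\psi_s|_\xi$ for $0<s<T$ need not be rotations, so the blown-up flow is only the suspension of a circle map conjugate to the irrational rotation (hence topologically equivalent to the linear flow, which still has no closed orbits) --- this does not affect the conclusion, since all you use is the absence of periodic directions, which is exactly the paper's linear-algebra step.
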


 \begin{proof}
 As we shall see in \S\ref{transv} the component
 $\Si$ has an asymptotic direction
 $e(t)\in\xi(\Ga(t))$, where $\xi$ is the contact structure.
 The direction of $e(t)$ turns more slowly than its movement $s\mapsto d\vr_s(e(t_0))$
 under the linearized Reeb flow. Then the extension $P$ of the return map
 to $\Ga=\partial\Si$ is given by $\Ga(t)\mapsto\Ga(b(t))$, where
 $$
 b(t)=t+\min\{\,s>0\;|\;\exists \la >0:\;  d\vr_{s}(e(t))=\la\,e(t+s)\,\}.
 $$

 If the extension $P$ has a periodic point at $\Gamma(t)$ then the subspace generated
 by $e(t)$ is invariant under $d\vr_T$, where $T$ is a multiple of the period of $\Ga$.
 Then some iterate of the Poincar\'e map of the periodic orbit $\Ga$ has an invariant
 1-dimensional subspace.  Since $\Ga$ is elliptic, this implies that the Poincar\'e map
 of $\Ga$ has an eigenvalue which is a root of unity. This is a contradiction.
 \end{proof}

\bigskip

\subsection{The hyperbolic case. Sketch of the proof.}\quad

\bigskip

Let $\Ga$ be a periodic orbit for the Reeb flow of $(N,\la)$.
The contact structure $\xi = \ker\la$ is a subspace transversal to the
Reeb vector field which is non integrable. The image of the exponential map
of a small ball $B_\de(0)\cap \xi(\Ga(t))$,
 $\Xi (\Ga(t))=\exp_{\Ga(t)}(\xi(\Ga(t))\cap B_\de(0))$
 is a system of transversal sections to the Reeb flow
 in a neighborhood of $\Ga$ which is tangent to the 
 contact structure at $\Ga$.

The picture of the return map to the surface  of section nearby  the 
boundary orbit and its extension to the boundary is clear when the 
flow is linear in a neighborhood of the  biding orbit $\Ga$, the 
contact structure $\xi$ is orthogonal to the vector field $X(\Ga)=\dot\Ga$
and the intersections, near the boundary $\partial\Si$, 
of the surface of section $\Si$ with the transversals
$\Xi$, are images under the exponential map of straight lines. 
In this case the set of images of  straight lines
in $\xi$ passing through $\Ga$ is a singular foliation invariant\footnote{
This does not happen on a contact flow but may happen for 
a reparametrization of the flow.} by the flow.

In the following paragraphs we show that the return map to $\Si$ is conjugate
to the situation described above. We choose a coordinate system in which
the strong invariant manifolds $W^{s}(\psi_tz)$, $W^{u}(\psi_t z)$ coincide with coordinate axes,
and the flow is linear. In these coordinates the surface of section turns around the
axis at least an angle $\pi$. We construct an open book decomposition $\cF$ 
of a neighborhood of $\Ga$ with spine $\Ga$, which is invariant under the
Reeb flow, whose intersections with the transversals $\Xi$ 
are straight lines  in these coordinates. 

Then we show that nearby $\Ga$ there is an isotopy $\Si_s$ between $\Si$ 
and a surface $\Si_1$, with the properties that for all $s$, $\Si_s$ is transversal 
to the Reeb vector field $X$ and that,
for the final surface,
$\Si_1 \cap \Xi(\Ga(t))$ is a leaf of $\cF \cap \Xi(\Ga(t))$ for all $t$. 
Then the return map is conjugated to the map that arises in 
 the simpler situation described above.

For this, in  \S\ref{tangency} we compute the  condition for a small 
cylinder, with $\Ga$ as one boundary component, to be tangent to the 
Reeb vector field $X$.  In  \S\ref{isot} we give sufficient 
conditions for such  isotopy to give surfaces transversal to $X$.
Finally, in  \S\ref{transv},  we use the  transversal 
approach of $\Si$ to $\Ga$ to prove that the isotopy is made by 
surfaces transverse to $X$.

\bigskip

 \subsection{Coordinates and preliminary equations.}\label{Coord}\quad
 \medskip
 
 Let $(N,\la)$ be a 3-dimensional contact manifold,
  let $X$ be its Reeb vector field, $\psi_t$ its Reeb flow 
 and $\xi=\ker\la$ its contact structure.
 Let $\Si$ be a surface of section for $X$ and $\Ga\subset\partial\Si$
 a rotating boundary periodic orbit. This means that there is a neighborhood
 $U$ of $\Ga$ in $\Si$ such that the first arrival times of $\psi_t$ and $\psi_{-t}$ 
 from $U$ to $\Si$ are bounded.
 
 From now on we assume that the boundary periodic orbit $\Ga$ is hyperbolic.
 For simplicity assume
  that the periodic orbit $\Ga$ has period $1$.
 To simplify the notation
 we shall also assume
  that $\Ga$ has 
  negative eigenvalues and that
 each local invariant manifold $W^s_\e(\Ga)$, $W^u_\e(\Ga)$ intersects
 $\Si$ in 
 a  neighborhood of $\Ga$ in 3 connected components. The other cases are similar.

 Since $\Ga$ has negative eigenvalues we can
 choose a smooth coordinate system $(x,y,z)$ near the periodic orbit
 $\Ga$ such that 
 $$
\Ga(t)=(0,0, t+\Z)\in\re^2\times \SS^1/\equiv, \qquad
 \SS^1=\re/\Z, \quad
(x,y,t+1)\equiv (-x,-y,t),
$$
 \begin{align*}
 W^{u}_\e(\Ga(t))&=\{(x,0, t\,\text{mod }1)\,:\,|x|<\e\;\},
 \\
  W^{s}_\e(\Ga(t))&=\{(0,y, t\,\text{mod }1)\,:\,|y|<\e\;\},
 \end{align*}
 and along the periodic orbit one has
 \begin{equation}\label{coords}
 e_u:=\tfrac{\partial}{\partial x}\in \E^u, \quad
 e_s:=\tfrac{\partial}{\partial y}\in \E^s, \quad
 d\eta(e_u,e_s)=1 \quad \text{ and } \quad\tfrac{\partial}{\partial z}=X,
 \end{equation}
 where $\E^s$, $\E^u$ are the stable and unstable subspaces for $\Ga$:
 \begin{align*}
 d\psi_s(\E^{s,u}(\Ga(t))) = \E^{s, u}(\Ga(t+s)),
 \quad \lV d\psi_1|{\E^s}\rV<1,\quad
 \lV d\psi_1|\E^u\rV >1.
 \end{align*}

 Consider  the derivative $DX(0,0,t)$ of the vector field
 along the orbit $\Ga(t)$. Since the subspaces $\E^s$, $\E^u$ are
 invariant under the linearized flow, we have that
 \begin{equation}\label{DX}
 DX(0,0,t)=\left[\begin{matrix}
 \la_t & 0 & 0 \\
 0 & \mu_t & 0 \\
 0 & 0 &  0
 \end{matrix}\right].
  \end{equation}
  The derivative of the flow at the periodic orbit
  $F(t):=d\vr_t(0,0,0)$ satisfies the differential equation
  $\dot{F} = DX(0,0,t)\; F$. Its solution is
  \begin{align*}
  d\vr_t\big[x_0\; e_u(\tau) &+ y_0\; e_s(\tau) + z_0\; X(0,0,\tau)\big] =
   \\
   &=   x_0\,e^{\int_\tau^{\tau+t} \la_s\,ds}\;e_u(t+\tau) +
   y_0\,e^{\int_\tau^{\tau+t} \mu_s\,ds} \;e_s(t+\tau)+ z_0\, X(0,0,t+\tau).
  \end{align*}
   Since $\vr_t$ preserves $d\eta$, 
   $d\eta[d\vr_t(e_u),d\vr_t(e_s)]\equiv 1$.
   This implies that $\mu_t=-\la_t$ for all $t$.

   Since $d\vr_t$ is 1-periodic in $t$, the unique invariant (i.e.
   1-periodic) subspace transversal to $X$ is given by
   $\xi=\spun{e_u, e_s}$, which necessarily coincides with
   the contact structure $\xi=\ker \eta$ along $\Ga$.
   Indeed, if in our coordinates
   the invariant transversal subspace $E_t$ at
   $(0,0,t)$ is given by   $E_t$:~$a_t\,x+b_t\,y+c_t\,z=0$,
   with 1-periodic functions $a_t$, $b_t$, $c_t$.
   Then for any $(x,y,z)$ such that
   $$
   a_\tau \, x + b_\tau\, y + c_\tau\, z =0,
   $$
   we must have that $d_{(0,0,\tau)}\vr_n(x,y,z)\in E_{\tau+n}$, i.e.
   $$
   a_{\tau+n} \,x\, e^{\int_0^n \la_s\;ds}+
   b_{\tau+n}\, y \, e^{-\int_0^n\la_s\;ds}+ c_{\tau+n}\, z =0,
   $$
   for all $n\in\Z$. 
   Since $a_t$, $b_t$, $c_t$ are 1-periodic,
   this implies that $a_\tau=b_\tau=0$ for all
   $\tau$.

   Thus the equation for the derivative of the flow
   $d\psi_t\vert_{\xi}$ restricted to the contact structure $\xi$
   over $\Ga$ is
   \begin{equation}\label{linpart}
    \dw = A(t)\cdot w,   \qquad
    A(t)=\begin{bmatrix} \la_t & 0 \\
          0 & -\la_t \end{bmatrix}.
   \end{equation}
   We write now this differential equation in polar coordinates.

   Let $w(t)= (x(t),y(t))= \zeta(t)\, u_{\be(t)}$
   be a solution of~\eqref{linpart},
   where $u_\be=(\cos\be,\,\sin\be)$.
   Then
   $w(t)= (x_0\, e^{\La_t}, y_0\, e^{-\La_t})$,
   $\La_t=\int_0^t \la_s\;ds$, $w(0)=(x_0,y_0)$. Hence
   $$
   \tan \be(t) = e^{-2 \La_t}  \tan\be(0).
   $$
     Differentiating this equation we obtain
 $$
 (\sec^2 \be)\,\dbe = - 2\la_t\,\tan\be,
 $$
 \begin{align}
 \dbe &= -2\la_t\,\cos^2\be\;\tan\be, \notag\\
 \dbe &= -\la_t\,\sin(2\be).\label{dbe}
 \end{align}
 Also,
 \begin{align}
 \zeta(t)^2 &=e^{2\La_t}x_0^2+e^{-2\La_t}y_0^2 \notag\\
 2 \,\zeta \,\dzeta &= 2\la_t\,\big(e^{2\La_t} x^2_0-e^{-2\La_t} y^2_0\big)
 \notag \\
 \zeta \,\dzeta &= \la_t\,\zeta^2\,(\cos^2\be-\sin^2\be)
 \notag \\
 \dzeta &= \la_t\,\zeta\,\cos(2\be).\label{dzeta}
 \end{align}

 The vector field $X$ satisfies
 $X(x,y,t)=(\la_t\, x,\;-\la_t\,y,\; 1)+ \cO(x^2+y^2)$.
 Let 
 $$\ga(t)=(\rho(t)\,u_{\a(t)},z(t))
 $$ 
 be a solution
 of $\dga=X(\ga)$. Then
 $\dga=(\drho\,u_\a+\rho\,\da\,u_\a^\perp,\;\dz)$,
 $u_\a^\perp:=  (-\sin\a,\cos\a)$, and
 \begin{align*}
 \drho &=\langle\dga,u_\a\rangle
      =\langle X(\ga), u_\a\rangle
      =\langle A(t)\,\ga, u_\a\rangle +\cO(\rho^2), \\
 \rho\,\da &= \langle\dga, u_\a^\perp\rangle
       =\langle X(\ga), u_\a^\perp\rangle
       =\langle A(t)\,\ga, u_\a^\perp\rangle + \cO(\rho^2).
 \end{align*}
 Therefore, using~\eqref{dzeta} and~\eqref{dbe},
 \begin{align}
 \drho &=\la_t\,\rho\,\cos(2\a)+\cO(\rho^2), \label{drho}
 \\
 \rho\,\da &=-\la_t\, \rho\,\sin(2\a)+\cO(\rho^2),
 \notag \\
 \da &=-\la_t\,\sin(2\a)+\cO(\rho).\label{da}
 \end{align}

  Writing $X=(f_1,f_2,f_3)$ let $Y=\tfrac 1{f_3}\, X$ and let $\phi_t$
  be the flow of $Y$. Then $\phi_t$ is the reparametrization of
  $X$ which preserves the solution $\Ga(t)$ and for which the
  foliation ``$z=$~constant'' is invariant. Observe that from~\eqref{DX},
  $\tfrac{\partial f_3}{\partial x}\vert_{(0,0,z)}
  =\tfrac{\partial f_3}{\partial y}\vert_{(0,0,z)}\equiv 0$.
  The vector field $Y$ is not
  a Reeb vector field of a contact form but it is smooth
  and along $\Ga$,
  $Y=X$ and $DY=DX$. In particular,
  the arguments above remain valid for $Y$.

  Consider the $\phi\,$-invariant foliation $\cF$ of
  $U_\e\setminus(\{{\mathbf 0}\}\times \SS^1)$, where
  $$
  U_\e:=\D_\e\times \SS^1,
  \quad
  \D_\e=\{z\in\re^2:|z|<\e\},
  \quad
   \SS^1=\re/\Z,
   $$
  whose leaves are
  \begin{equation}\label{folcf}
  \cF_t(u_{\a_i}):=\{\phi_t(r\,u_{\a_i},s) : r\in]0,\de[,\;s\in \SS^1\,\}\cap U_\e,
  \quad \a_i=\tfrac{\pi}{4} i, \quad i=0,1,\ldots,7,
  \end{equation} 
  with $0<\e\ll\de\ll 1$.
  This is a ``radial'' foliation which satisfies $\phi_s(\cF_t(\a_i))=\cF_{s+t}(\a_i)$.
  Observe that
  $\cF_s(u_0)=\cF_t(u_0)\subset W^u(\Ga)$ for all $s,t$
  and also $\cF_t(u_{\frac{\pi}2})$,~$\cF_t(u_{\frac{3\pi}2})\subset
  W^s(\Ga)$ and $\cF_t(u_{\pi})\subset W^u(\Ga)$.

  Let $\Si$ be a surface of section having $\Ga$ as a rotating boundary orbit.
   We will construct an isotopy
  of $\Si\cap U_\e$ along surfaces $\Si_\si$, $\si\in[0,1]$
   such that $\partial\Si_\si\cap \interior(U_\e)=\Ga$,
    $\Si_0=\Si$, $\Si_\si$ is transversal to $Y$
    and such that for all $\tau\in \SS^1$,
     $\Si_1\cap[z=\tau]$ is included in one
     leaf of $\cF$.

  \subsection{The tangency condition.}\label{tangency}\quad

  Consider an annular smooth surface $S$ in $U_\e$ with boundary
  $\partial S\cap \interior(U_\e)=\Ga$ which has a well defined
  limit tangent space at the points in $\partial S=\Ga$.
  Then, for $\e>0$ small enough,
   $S\cap U_\e$ is the image of a map $F:[0,\e[\times \SS^1\to U_\e$,
  $F(r,t)=(r\,u_{\th(r,t)},t)$, where $\theta$ is a $C^1$
  map, with continuous derivatives at $r=0$. We obtain now the
  conditions for $S$ to be tangent to the vector field $Y$ on
  $r>0$:

   \begin{Lemma}\label{Ltang}
 If the surface $F(r,t)=(r \,u_{\theta(r,t)} ,t)$
 is tangent to the vector field
 $Y$ at a point $(r,t)$ then
 $\th(r,t)$ satisfies
 \begin{equation}\label{tang}
  \th_t = -\la_t\,\big[\,\sin(2\th)+ r\,\th_r\,\cos(2\th)\,\big]+\cO(r)
  \qquad\text{ at } (r,t).
 \end{equation}
 \end{Lemma}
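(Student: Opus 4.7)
The plan is a direct computation in the moving frame $(u_\theta, u_\theta^\perp, \partial_z)$ along the surface. First, I will compute the tangent plane of $S$ at the point $F(r,t)=(r u_\theta, t)$. Differentiating $F$ and using $\partial_r u_\theta = \theta_r u_\theta^\perp$, $\partial_t u_\theta = \theta_t u_\theta^\perp$, one gets
\[
\partial_r F = u_\theta + r\theta_r\, u_\theta^\perp, \qquad \partial_t F = r\theta_t\, u_\theta^\perp + \partial_z,
\]
where for brevity I suppress the third coordinate. So the tangent plane is spanned by these two vectors.

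Next, I will express $Y$ at the point $(r u_\theta, t)$ in the same frame. Because $Y = X/f_3$, by construction its $z$-component equals $1$, and a curve $(\rho u_\alpha, z)$ integrates $Y$ iff $\dot z = 1$ together with the polar equations derived from \eqref{drho} and \eqref{da}, namely
\[
\dot\rho = \lambda_t\,\rho\cos(2\alpha)+\mathcal O(\rho^2),\qquad \rho\,\dot\alpha = -\lambda_t\,\rho\sin(2\alpha)+\mathcal O(\rho^2).
\]
Evaluating at $\rho=r$, $\alpha=\theta$, $z=t$ and using that the infinitesimal displacement decomposes as $\dot\rho\, u_\theta + \rho\dot\alpha\, u_\theta^\perp + \dot z\,\partial_z$ gives
\[
Y(F(r,t)) = \bigl[\lambda_t\, r\cos(2\theta)+\mathcal O(r^2)\bigr] u_\theta + \bigl[-\lambda_t\, r\sin(2\theta)+\mathcal O(r^2)\bigr] u_\theta^\perp + \partial_z.
\]

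Finally, the tangency condition is that $Y = a\,\partial_r F + b\,\partial_t F$ for some scalars $a,b$. Reading off the $\partial_z$-component forces $b=1$. The $u_\theta$-component then gives $a = \lambda_t r\cos(2\theta)+\mathcal O(r^2)$, and the $u_\theta^\perp$-component gives
\[
a\,r\theta_r + r\theta_t = -\lambda_t\, r\sin(2\theta)+\mathcal O(r^2).
\]
Substituting the value of $a$, dividing through by $r$, and collecting the error terms of order $r\cdot\mathcal O(r)=\mathcal O(r)$ on the right-hand side yields exactly
\[
\theta_t = -\lambda_t\bigl[\sin(2\theta) + r\theta_r\cos(2\theta)\bigr] + \mathcal O(r),
\]
which is \eqref{tang}. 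The computation is routine; the only point requiring care is consistently tracking the higher-order terms through the substitution, which is why the $\mathcal O(r)$ error on the right is the correct order (and not $\mathcal O(r^2)$) after dividing by $r$.
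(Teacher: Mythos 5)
Your computation is correct and is essentially the same as the paper's: both span the tangent plane by $\partial_rF$ and $\partial_tF$, express $Y$ at $F(r,t)$ via the polar equations \eqref{drho}, \eqref{da} (valid for $Y$ since $Y=X$ and $DY=DX$ along $\Ga$ and $Y$ has $z$-component $1$), read off $b=1$ and $a$ from the $\partial_z$ and $u_\th$ components, and substitute into the $u_\th^\perp$ component. The only cosmetic difference is that the paper uses $r\,\partial_r F$ as a basis vector so its coefficient $a$ is your $a/r$; the bookkeeping of the $\cO(r)$ error after dividing by $r$ is handled correctly in both.
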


  \begin{proof}
  The tangent plane to $S$ is generated by
  \begin{align*}
  r\cdot\tfrac{\partial F}{\partial r}
  &= \big( r\,u_\th + r^2\,\th_r\,u_\th^\perp,\;0\big),
  \\
  \tfrac{\partial F}{\partial t}
  &= \big( r\,\th_t\,u_\th^\perp,\;1\big).
  \end{align*}
  Let $\ga(t)=\big(\rho(t)\,u_{\a(t)},t\big)$ be an orbit
  of the flow $\phi$ of $Y$. Then $\rho(t)$ and $\a(t)$
  also satisfy equations~\eqref{drho} and~\eqref{da}.
  The surface $S$ is tangent to the vector field $Y$
  at $r\,u_\th=\rho\, u_\a$ if and only if there  exists $a,\,b\in\re$
  such that
    \begin{align*}
    \big(\drho\,u_\a+\rho\,\da\,u_\a^\perp\,,\,1)
    &= a\;r\,\tfrac{\partial F}{\partial r}
    + b\,\tfrac{\partial F}{\partial t}
    \\
    &= a\,\big( r\,u_\th + r^2\,\th_r\,u_\th^\perp\,,\,0)
    + b\,\big(  r\,\th_t\,u_\th^\perp\,,\,1).
    \end{align*}
    In this case $b=1$ and, using~\eqref{drho} and~\eqref{da},
    \begin{gather}
    \drho = a\,r = \la_t\,\rho\,\cos(2\a) +\cO(\rho^2),
    \label{a}\\
    \rho\,\da = a\, r^2\,\th_r+r\,\th_t
    = -\rho\,\la_t\,\sin(2\a)+\cO(\rho^2).
    \label{thr}
    \end{gather}
    Since $\rho=r$ and $\a=\th$, from~\eqref{a} we get
    that
    $a= \la_t\,\cos(2\th)+\cO(r)$.
    Substituting $a$ in~\eqref{thr} we get ~\eqref{tang}.

    \end{proof}

 \subsection{The isotopy.}\label{isot}\quad

  Let $F:[0,\e[\times \SS^1\to U_\e$, $F(r,t)=(r\,u_{\th(r,t)},\,t)$
   be a local parametrization of the 
    surface of section $\Si=:\Si_0$.
   Let $\oth(t):=\th(0,t)$.
   Let $G:[0,\e[\times \SS^1\to U_\e$, $G(r,t)=(r\,u_{\om(r,t)},\,t)$
   be defined by $G([0,\e[,t)\in\F\cap(\re^2\times\{t\})$, where
   $\F$ is the leaf of the foliation $\cF$ in \eqref{folcf} 
   such that its tangent
   space at $(0,0,t)$ is $E=\spun{\,(0,0,1),\,(u_{\oth(t)},0)\,}$.

\newpage

 \begin{Lemma}\label{L>}\quad
 
 Let $\theta,\,\om:[0,\e[\times \SS^1\to \re$ be of class $C^1$.
 Write $\oth(t):=\th(0,t)$ and $\oom(t):=\om(0,t)$. 
 \newline
   For $\mu\in[0,1]$ write
 $\vr^\mu(r,t):=\mu\,\om(r,t)+(1-\mu)\,\th(r,t)$.
 Suppose that
 \begin{equation}\label{tht}
 \oth_t < -\la_t\,\sin(2 \oth)
 \quad\text{ and }
 \quad \oom(t)=\oth(t)
 \quad\text{ for all }\;t\in \SS^1.
 \end{equation}
 Then there is $\rho_0>0$ such that for all  $\mu\in[0,1]$,
 the surface $H^\mu(r,t)=\big(r\,u_{\vr^\mu(r,t)},t\big)$
 is transversal to the vector field $Y$ at all points
 $(r,t)$ with $0<r<\rho_0$, $t\in \SS^1$.
 \end{Lemma}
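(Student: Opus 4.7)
The plan is to turn transversality into the non-vanishing of the tangency residue supplied by Lemma~\ref{Ltang}. For $\psi(r,t)$ of class $C^1$ let
$$\Psi(\psi)(r,t) \;:=\; \psi_t + \la_t\sin(2\psi) + \la_t\, r\,\psi_r\cos(2\psi) - R_\psi(r,t),$$
where $R_\psi(r,t)=\cO(r)$ collects the quadratic remainders of equations \eqref{drho}--\eqref{da} evaluated at $(r\,u_\psi,t)$. The proof of Lemma~\ref{Ltang} in fact shows that the surface $(r,t)\mapsto(r\,u_{\psi(r,t)},t)$ is tangent to $Y$ at a point with $r>0$ if and only if $\Psi(\psi)(r,t)=0$. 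Since $Y$ is smooth and $\vr^\mu$ is jointly $C^1$ in $(\mu,r,t)$, the map $(\mu,r,t)\mapsto\Psi(\vr^\mu)(r,t)$ is jointly continuous on $[0,1]\times[0,\e[\times \SS^1$, so it suffices to produce $\rho_0>0$ on which it is nowhere zero.

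I would first evaluate at $r=0$. Because $\oom\equiv\oth$, we have $\vr^\mu(0,t)=\oth(t)$ and $\vr^\mu_t(0,t)=\oth'(t)$ independently of $\mu$; the term $\la_t\, r\,\vr^\mu_r\cos(2\vr^\mu)$ vanishes at $r=0$, and $R_{\vr^\mu}(0,t)=0$ since $R_{\vr^\mu}=\cO(r)$. Hence
$$\Psi(\vr^\mu)(0,t) \;=\; \oth'(t) + \la_t\sin(2\oth(t)),$$
which is strictly negative for every $t\in \SS^1$ by hypothesis~\eqref{tht} and is independent of $\mu$. Compactness of $\SS^1$ then yields $\delta>0$ with $\Psi(\vr^\mu)(0,t)\le -\delta$ uniformly in $(\mu,t)\in[0,1]\times \SS^1$, and a standard compactness--continuity argument extends this: there is $\rho_0\in(0,\e)$ such that $\Psi(\vr^\mu)(r,t)\le -\delta/2<0$ for all $\mu\in[0,1]$, $t\in \SS^1$ and $0\le r<\rho_0$. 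By Lemma~\ref{Ltang}, $H^\mu$ is transversal to $Y$ at every $(r,t)$ with $0<r<\rho_0$.

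No serious obstacle is anticipated. The strict sign at $r=0$ is exactly hypothesis~\eqref{tht}, and the key point that the interpolation parameter $\mu$ disappears on $\Ga$ is a direct consequence of $\oom=\oth$. The only detail meriting a line of care is the joint continuity of $R_{\vr^\mu}$ in $(\mu,r,t)$, which follows because $R_{\vr^\mu}$ is the smooth higher-order part of $Y$ evaluated at $(r\,u_{\vr^\mu(r,t)},t)$ and therefore inherits the $C^1$ regularity of $\vr^\mu$ in all its variables.
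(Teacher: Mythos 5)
Your proof is correct and follows essentially the same route as the paper's: both reduce transversality to the failure of the tangency identity of Lemma~\ref{Ltang}, observe that at $r=0$ the hypothesis $\oom=\oth$ makes the residue equal to $\oth'(t)+\la_t\sin(2\oth(t))<0$ independently of $\mu$, and propagate the strict sign to small $r>0$ by compactness and (joint) continuity. The paper merely carries out the continuity step with explicit $\e$-estimates, and note that only the ``tangent $\Rightarrow$ identity'' direction of Lemma~\ref{Ltang} is available as stated --- which is all your argument actually uses, so your ``if and only if'' phrasing is harmless.
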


 \begin{proof}
  Since $\SS^1$ is compact, there exists $\e>0$ such that
 $$
 \oth_t < -\la_t \sin(2\oth)-3\e
 \quad\text{ for all }\;t\in \SS^1.
 $$
 Choose $\rho_1>0$ such that for all
 $0\le r<\rho_1$ and $t\in \SS^1$,
 \begin{gather*}
 |\la_t|\, r\, |\th_r|<\tfrac\e 4
 \,,\quad
 |\la_t|\, r\, |\om_r|<\tfrac\e 4\,,
 \\
 |\la_t|\,|\oth(t)-\th(r,t)|<\tfrac\e{16}
 \,,\quad
 |\la_t|\,|\oth(t)-\om(r,t)|<\tfrac\e{16}
 \quad\text{ and }
 \\
 \th_t<-\la_t\,\big[\sin(2\th)+r\,\th_r \cos(2\th)\big]-2\e,
 \\
 \om_t<-\la_t\,\big[\sin(2\om)+r\,\om_r \cos(2\om)\big]-2\e.
 \end{gather*}
 Then
 \begin{gather*}
 |\la_t|\,|\vr^\mu-\oth|<\tfrac\e{16}, \quad |\la_t|\,|\vr^\mu-\th|<\tfrac\e8,
 \quad |\la_t|\,|\vr^\mu-\om|<\tfrac\e8,
  \\
 |\la_t|\,|\sin(2\vr^\mu)-\sin(2\oth)|<\tfrac\e8
 \quad\text{ and}\\
 |\la_t|\,|\sin(2\vr^\mu)-\sin(2\th)|<\tfrac\e4,
 \quad
 |\la_t|\,|\sin(2\vr^\mu)-\sin(2\om)|<\tfrac\e4.
 \end{gather*}
 Hence
 $$
 |\la_t|\,\big|\sin(2\vr^\mu)-[\mu\,\sin(2\om)+(1-\mu)\,\sin(2\th)]\big|
 <\tfrac\e4.
 $$
 Also, for all $(r,t)\in[0,\rho_1[\times \SS^1$ and $\mu\in[0,1]$,
 since
 $\vr^\mu_r=\mu\,\om_r+(1-\mu)\,\th_r$, we have that
  $$
 |\la_t\, r \,\th_r \,\cos(2\th)|<\tfrac\e4,\quad
 |\la_t\, r \,\om_r \,\cos(2\om)|<\tfrac\e4
 \quad\text{ and }\quad
 |\la_t\, r \,\vr^\mu_r \cos(2\vr^\mu)|<\tfrac\e4.
 $$
 Thus, we have that
 \begin{align*}
 \vr^\mu_t
  &= \mu\,\om_t + (1-\mu)\,\th_t \\
 &<-2\e -\la_t\,\big[\mu\,\sin(2\om)+(1-\mu)\,\sin(2\th)\big]
             -\mu\,\la_t\,r\,\om_r \cos(2\om)-(1-\mu)\,\la_t\,r\,\th_r
         \cos(2\th)
         \\
       &< -2\e +\tfrac\e4-\la_t\,\sin(2\vr^\mu) +\tfrac\e4+\tfrac\e
       4
                 \\
       &< -2\e +\tfrac\e 4-\la_t\,\sin(2\vr^\mu) +\tfrac\e2
           - \la_t \, r\,\vr^\mu_r\,\cos(2\vr^\mu)
          +\tfrac\e4
          \\
       &< -\la_t\big[\sin(2\vr^\mu)
       +r\,\vr^\mu_r\,\cos(2\vr^\mu)\big]-\e.
 \end{align*}
 Then there is $0<\rho_0<\rho_1$ such that
 if $0<r<\rho_0$,
 $$
 \vr^\mu_t
 < -\la_t\big[\sin(2\vr^\mu)
       +r\,\vr^\mu_r\,\cos(2\vr^\mu)\big]
       +\cO(r),
 $$
 where $\cO(r)$ is from lemma~\ref{Ltang}. Then lemma~\ref{Ltang}
 finishes the proof.
 \end{proof}

\subsection{The transversality condition.}\label{transv}\quad

  The equation for the dynamics under $d\psi_t$
 of subspaces along the periodic orbit $\Ga(t)$ is~\eqref{dbe}.
 Then equation~\eqref{tht} just says that the limit direction $\oth$ of the
 surface of section at $\Ga$
turns more slowly than its iteration under the linearized Reeb flow.
Condition~\ref{dcsss}.\eqref{css3} in the definition of complete systems
implies \eqref{tht}.

We check in \S\ref{transvb} equation \eqref{tht}  for surfaces of section which 
are projections of pseudo holomorphic curves in a symplectization,
and in \S\ref{TBS} for Birkhoff annular surfaces of section.
The surfaces of section that we use 
in theorem~\ref{TC} are
obtained by topological surgery from Birkhoff annuli.
These surgeries mantain inequality~\eqref{tht}. 

Since a surface of section is transversal to the Reeb flow in its interior,
a weak inequality equation~\eqref{tht}
must hold at a rotating  boundary orbit.  
If needed one can  modify the surface nearby its
boundary binding orbit so that the asymptotic rotation of the surface in the boundary
is uniform with respect to the rotation of the flow, satisfying \eqref{tht}.

\subsection{The transversality condition for a Birkhoff annulus.}\label{TBS}\quad

 Let $M$ be a closed oriented riemannian surface, 
 $SM$ its unit tangent bundle and let $\phi_t:SM\hookleftarrow$
 be the geodesic flow of $M$.
 Let $\la$ be the Liouville form on $TM$:
 $$
 \la_{(x,v)}(\xi)
 =\langle v, d\pi(\xi)\rangle_x, 
 \qquad (x,v)\in TM,\quad \xi\in T_{(x,v)}TM.
 $$
 Let $\V=\ker d\pi$, and $\H=\ker K$ be the {\it vertical} and {\it horizontal} subspaces, where 
 \linebreak
 $K:TTM\to TM$ is the connection.
 The subbundle $\cN=\ker\la$ of  of $T(SM)$,
 \begin{equation}\label{cN}
 \cN=\ker\la=\{\,(h,w)\in T_{\theta}SM\subset\H\oplus\V\;|\;\langle h,\theta\rangle_{\pi(\th)}=0\;\},
 \end{equation}
 is invariant under the linearized geodesic flow $d\phi_t$, which is given by
 $$
 d\phi_t(J(0),\dot J(0))= (J(t),\dot J(t)) \in \cN\subset\H\oplus\V,
 $$
 where $t\mapsto J(t)$ is a Jacobi field along a geodesic 
 $c(t)=\pi J(t)$  which is orthogonal to $\dot c(t)$.
 The tangent space to the unit tangent bundle $SM$ is 
 $$
 T_\th SM=\langle \X(\th)\rangle\oplus\cN(\th),
 $$
  where $\X$
 is the geodesic vector field.

 \medskip

Let $\cJ:T_xM\hookleftarrow$ be the rotation of angle $+\frac\pi2$.
Given   a simple closed geodesic $\ga(t)$ parametrized with unit speed,
define its {\it Birkhoff annulus} by
$$
A(\dga):=\{\,(x,v):\exists t,\; x=\ga(t),\; \langle v,\cJ\dga(t)\rangle_{\ga(t)}\ge 0\,\}
$$
The interior of $A(\dga)$ is tranversal to the geodesic flow.
The  tangent space of the Birkhoff annulus at a boundary
 point $\pm\dga(t)$, is generated by the geodesic vector field 
 $\X(\dga(t))$  and the vertical direction $\V\cap \cN$.

   In order to obtain the  transversality condition
 $ \oth_t<-\la_t\sin(2\oth)$ it is enough to show that the 
 (vertical) limit tangent space of the Birkhoff annulus moves slower than the 
 movement of the vertical subspace under the derivative of the flow.
 This is done as follows:

 Let $J(t)\in T_{\ga(t)}M$ be an orthogonal Jacobi field.
 Since both $J$ and  $\dot J$ are  multiples of the orthogonal vector $\dga(t)^\perp$
  they can be regarded as scalar quantities. 
  When $(J,\dot J)$ is not horizontal, i.e. when $\dot J(t)\ne 0$,
   define $W(t)=J(t)/\dot J(t)$.
  From the Jacobi equation
  $$
  \ddot J + K\, J =0    \qquad \text{and} \qquad J = W\, \dot J,
  $$
 we get
 $$
 \dot J =\dot W \dot J - W KJ.
 $$
 Replacing $J= W \dot J$ when $\dot J\ne 0$ one obtains the Riccati equation
 \begin{equation}\label{riccati2}
 \dot W= K W^2+1.
 \end{equation}
 A solution $W(t)$ is the slope of the iteration under  $d\phi_t$ of a linear subspace, i.e. if
 $$
 \cW_0=\graph W(0)=\{(W(0) v,v)\in \H\oplus \V\,|\, v\in\H\}
 $$
 then $d\phi_t(\cW_0)=\graph(W(t))$. The subspace $\cW_0$ is the vertical subspace $\V$ precisely when $W(0)=0$.  In this case, from~\eqref{riccati2}  we have that $\dot W(0)= 1$. 
 If $V(t)$ is the slope of the vertical subspace $\cN\cap\V$, then $V(t)\equiv 0$ and $\dot V(0)=0$. This means that the iteration $W(t)$ of the vertical subspace under the linearized geodesic flow moves faster than the vertical subspace $V(t)$ (tangent to the Birkhoff annulus).

 \subsection{The transversality condition for finite energy surfaces. }
 \label{transvb}\quad

 In this section we prove that condition~\eqref{tht} holds for projections on $\SS^3$
 of pseudo holomorphic curves in the simplectization of a tight contact form on $\SS^3$.
 Then we can apply item~\eqref{B3} of  theorem~\ref{TB} to the complete system
 of surfaces of section of genus 0 obtained by Hofer, Wysocki, Zehnder in 
 \cite[Cor. 1.8]{HWZ1}, in order to obtain 
 Corollary~\ref{Ctight}.

 In this case the complete system is given by the
 rigid surfaces of the finite energy foliation. 
 Let $\Si$ be a rigid surface and let $\Ga$ be a boundary 
 periodic orbit of $\Si$ where the foliation is radial.
 The equation for the dynamics under $d\psi_t$
 of subspaces along a periodic orbit $\Ga(t)$ is~\eqref{dbe}.
So we want to prove that the limit direction $\oth$ of the
surface of section $\Si$ at $\Ga$
turns slower than its iteration under the linearized Reeb flow.

Recall that the contact structure $\xi$ is invariant under the
Reeb flow $\psi_t$. The linearized Reeb flow on $\xi$ satisfies
$v(t)=d\psi_t(v(0))$ where $v(0)\in\xi$ and
\begin{equation}\label{fder}
\dot v=DX(\psi_t(\Ga(0))) \cdot v = S(t) \,v.
\end{equation}
Here the matrix $S(t)=DX(\Ga(t))$ is symmetric on symplectic
linear coordinates in $\xi(\Ga(t))$ and $v(t)=\zeta(t)\,u_{\be(t)}$
satisfies~\eqref{dbe} and~\eqref{dzeta}.

From theorem 1.4 in~\cite{HWZ96} (where $S_\infty = -J_0\, S(t)$ and $J_0=J|_\xi$),
there is a periodic vector $e(t)=\varepsilon(t)\,u_{\oth(t)}\in\xi(\Ga(t))$
in the asymptotic direction of the rigid 
surface $\Si$ which satisfies the
(eigenvalue) equation
\begin{equation}\label{eigenval}
\dot e = S(t)\, e(t)+\mu\,Je
\end{equation}
with $\mu<0$ and $J:\xi\hookleftarrow$ an almost complex structure on $\xi$.

Our choice of coordinates~\eqref{coords} about $\Ga(t)$
is symplectic and the almost complex structure can be taken $J(x,y)=(-y,x)$
in these coordinates. Comparing equations~\eqref{fder} and~\eqref{eigenval}
at an initial condition for $v(t)$ such that $v(t_0)=e(t_0)$ we get that the rigid surface
$\Si$ turns slower than the linearized flow.

Indeed, in polar coordinates $r\,u_\be=v(t_0)=e(t_0)=\e\,u_\oth$, from~\eqref{fder}, at $t=t_0$ we have that
\begin{align*}
S(t_0)\,e &= \dot v = \dot r\,u_\be+r\,\dot \be\,u_\be^\perp.\\
\dot e &= \dot\e \,u_\oth+\e\,\dot\oth\,u_\oth^\perp,\\
&=S(t_0)\,e+\mu\,Je,
\end{align*}
where $Je= \e\, u_\oth^\perp$. In the component $u_\oth^\perp$ these equations are
$$
\e\,\dot\oth=\e\,\dot\be+\mu\,\e.
$$
From~\eqref{dbe}, $\dot\be=-\la_t\,\sin 2\be=-\la_t\,\sin2\oth$. Therefore
$$
\dot\oth=-\la_t\,\sin2\oth+\mu,
$$
with the (constant) eigenvalue $\mu<0$. This implies~\eqref{tht}.

 \subsection{The return map.}\quad

  Lemma~\ref{L>} gives an isotopy of the local surface of section $\Si_0$
  by surfaces $\Si_\mu$, 
  \linebreak
  $\mu\in[0,1]$, which are transversal to
  the vector field. Then the return map of the Reeb flow
  to the final surface $\Si_1$ is topologically conjugate
  to the return map to the surface
   $\Si_0$ in a neighbourhood
  of its boundary.
  Moreover,  the intersections $\Si_1\cap (\re^2\times\{t\})$
  are included in a leaf of the radial invariant foliation $\cF$.
  This implies that the return map to $\Si_1$ near the boundary $\partial\Si_1$
   preserves the
  foliation of $\Si_1$ given by the sections
  $\langle\Si_1\cap (\re^2\times\{t\})\rangle_{t\in \SS^1}$.
  The surface $\Si_1$ is parametrized by
  $$
  G(r,t)=(r\,u_{\om(r,t)},t).
  $$
  At its boundary points $(0,t)$, the surface $\Si_1$ has a well defined
  tangent plane generated by $(u_{\om(0,t)},0)$ and the Reeb vector
  field $X=(0,0,1)$. Here $\om(0,t)=\oom(t)=\oth(t)=\th(0,t)$
  is the same angular approach of the surface $\Si_0$, which
  by~\S\ref{transv} satisfies
  $$
  \oth_t<-\la_t\,\sin 2 \oth.
  $$

  Let $P:\Si_1\to\Si_1$ be the first return map to $\Si_1$.
  Then in coordinates $(r,t)$ given by the parametrization
  $G(r,t)$ we have that
  \begin{equation}\label{Poin}
  P(r,t)=\big( a(r,t),\, b(t)\big).
  \end{equation}
  Here $b(t)$ is given by the time in which the leaf
  $G([0,\e[,t)$ returns to $\Si_1$. This is the same as the time in
  which the derivative of the flow sends the tangent subspace
  $T_{(0,t)}\Si_1$ to $T_{(0,b(t))}\Si_1$. The equations for the
   derivative of the  flow in polar coordinates are~\eqref{dbe} and
   ~\eqref{dzeta}. Then $b(t)$ is determined by the minimal $b(t)>t$
   satisfying
   \begin{align}
   \be(t)=\oth(t), \qquad
   \be(b(t))=\oth(b(t)),
   \qquad
   \dbe=-\la_t\,\sin(2\be).
   \label{ebeta}
   \end{align}
   This is the first return map $Q$ of the flow $t\mapsto (t,\be(t))$ of
   the differential equation~\eqref{dbe} to the graph of $\oth$
   (see figures~\ref{fig1}, \ref{fig2}).

   \begin{figure} 
   \includegraphics[scale=.3]{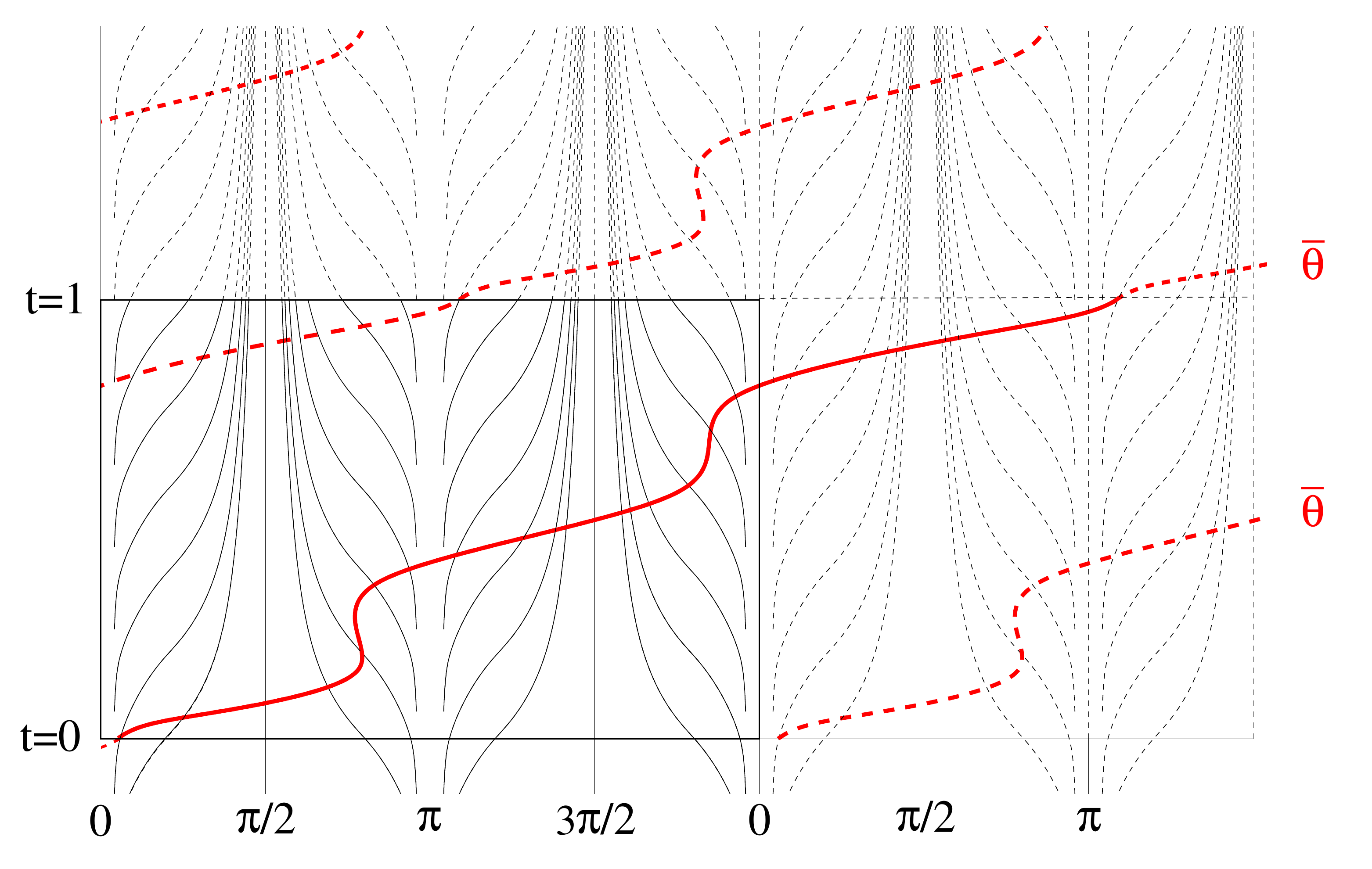}
   \caption{\footnotesize This figure shows the flow of the differential
   equation~\eqref{dbe} for $\be(t)$ describing the action of
   the derivative of the Reeb flow on 2-planes tangent to the
   periodic orbit $\ga$. It also shows the curve $(t,\oth(t))$,
   which corresponds to the movement of the limit tangent plane
   of the surface of section $\Si$ along its rotating boundary 
   periodic orbit $\ga$. The graph of $\oth$ is transversal to the
   flow of $\be$ and $\oth(t+1)=\oth(t)+{3\pi}$, but $\ov{\theta(t)}$ may not be monotonous.
   \newline 
   Observe that the first return map of the flow of~\eqref{dbe}
   to the graph of $\oth$ has  repelling periodic points at $\oth=0, \pi, 2\pi$
   and attracting periodic points at $\oth=-\frac{\pi}2,\frac{\pi}2,\frac{3\pi}2$.}
    \label{fig1}
  
    \hskip -5cm
   \includegraphics[scale=.25]{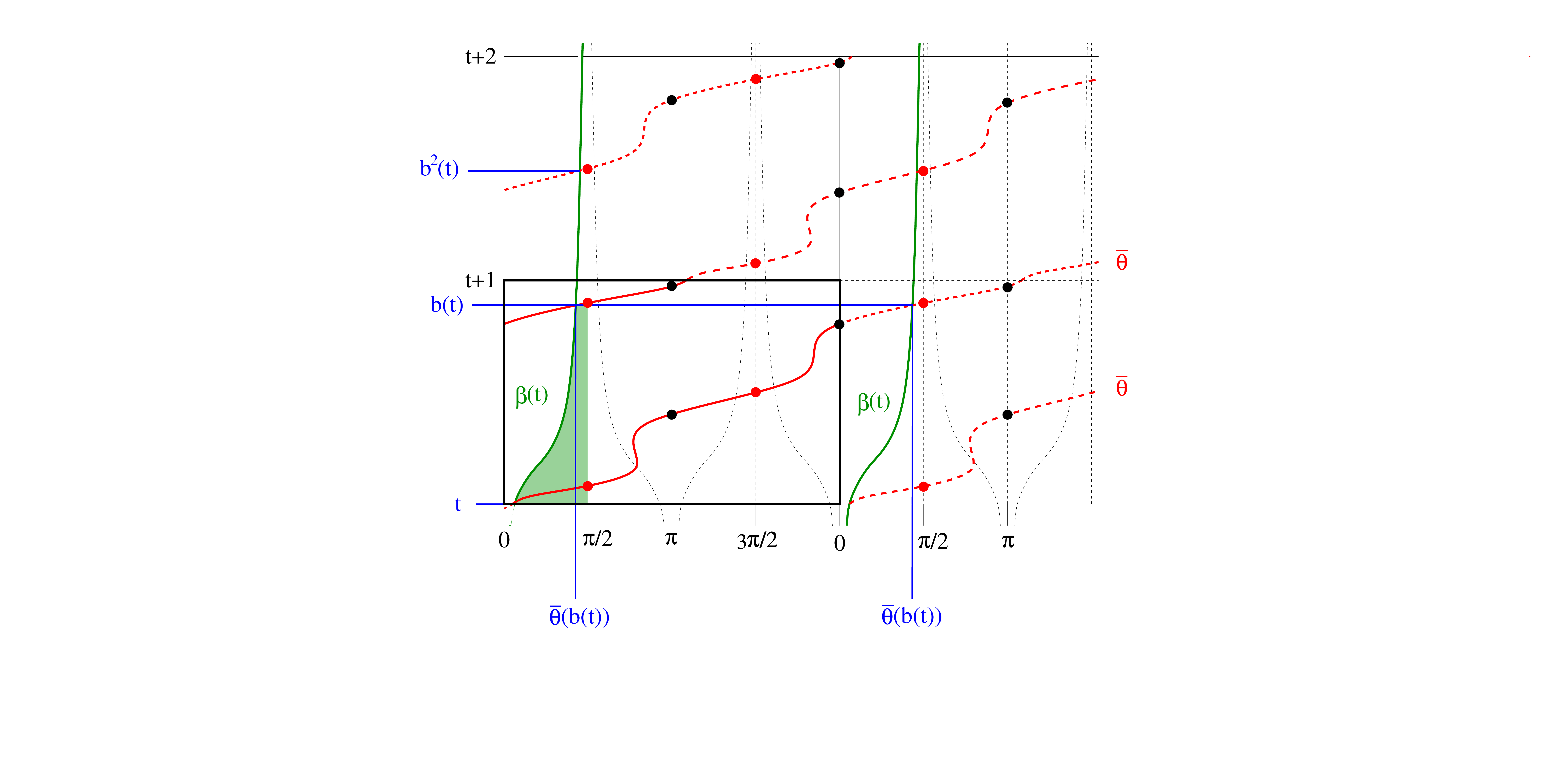}
   \vskip -7.5cm \hskip 9cm
   \includegraphics[scale=.35]{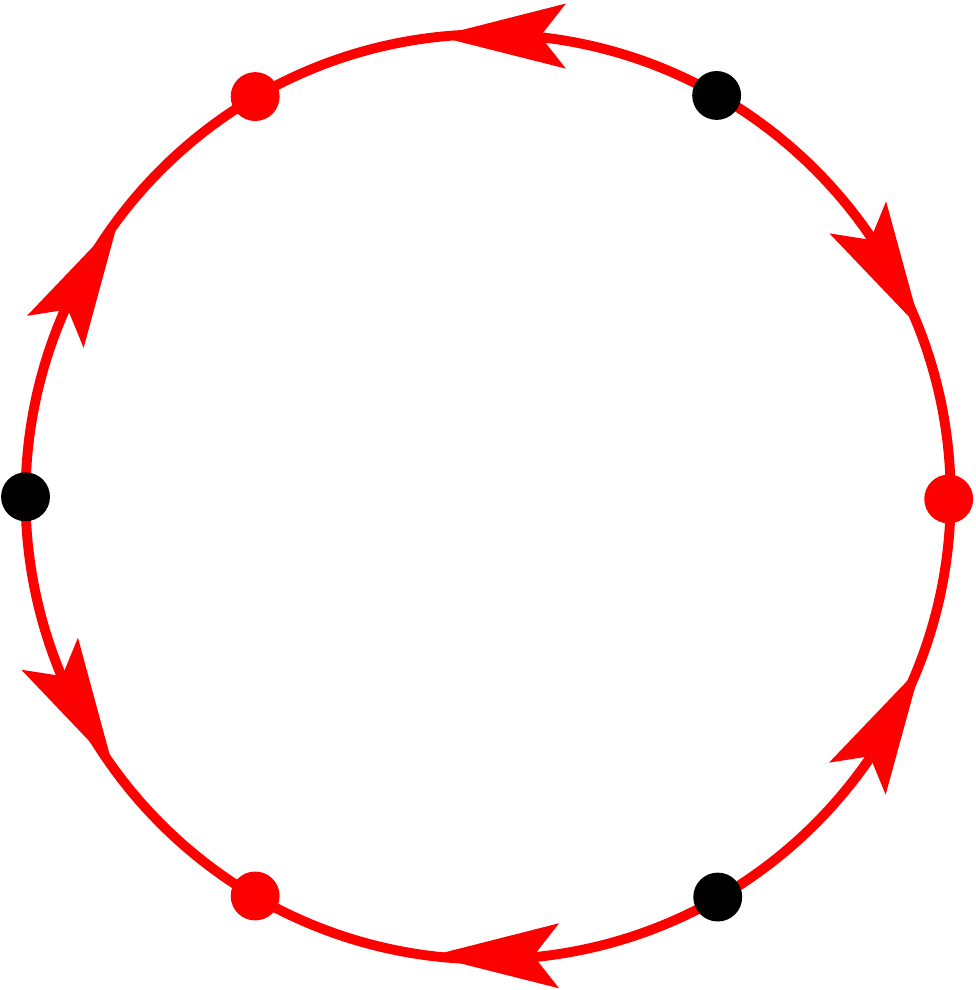}
   \vskip 2cm
      \caption{\footnotesize The figure at the left shows that the points $(t,\oth)$
   in the graph  $\cG(\oth)$ of $\oth$ with $\oth\notin \frac\pi 2 \Z~(\text{mod }3\pi)$ 
   are wandering under the return map. The figure at the
   right represents the dynamics of the return map $t\mapsto b(t)~(\text{mod }1)$.}
   \label{fig2}
   \end{figure}

   \begin{figure}[h]
   \includegraphics[scale=.25]{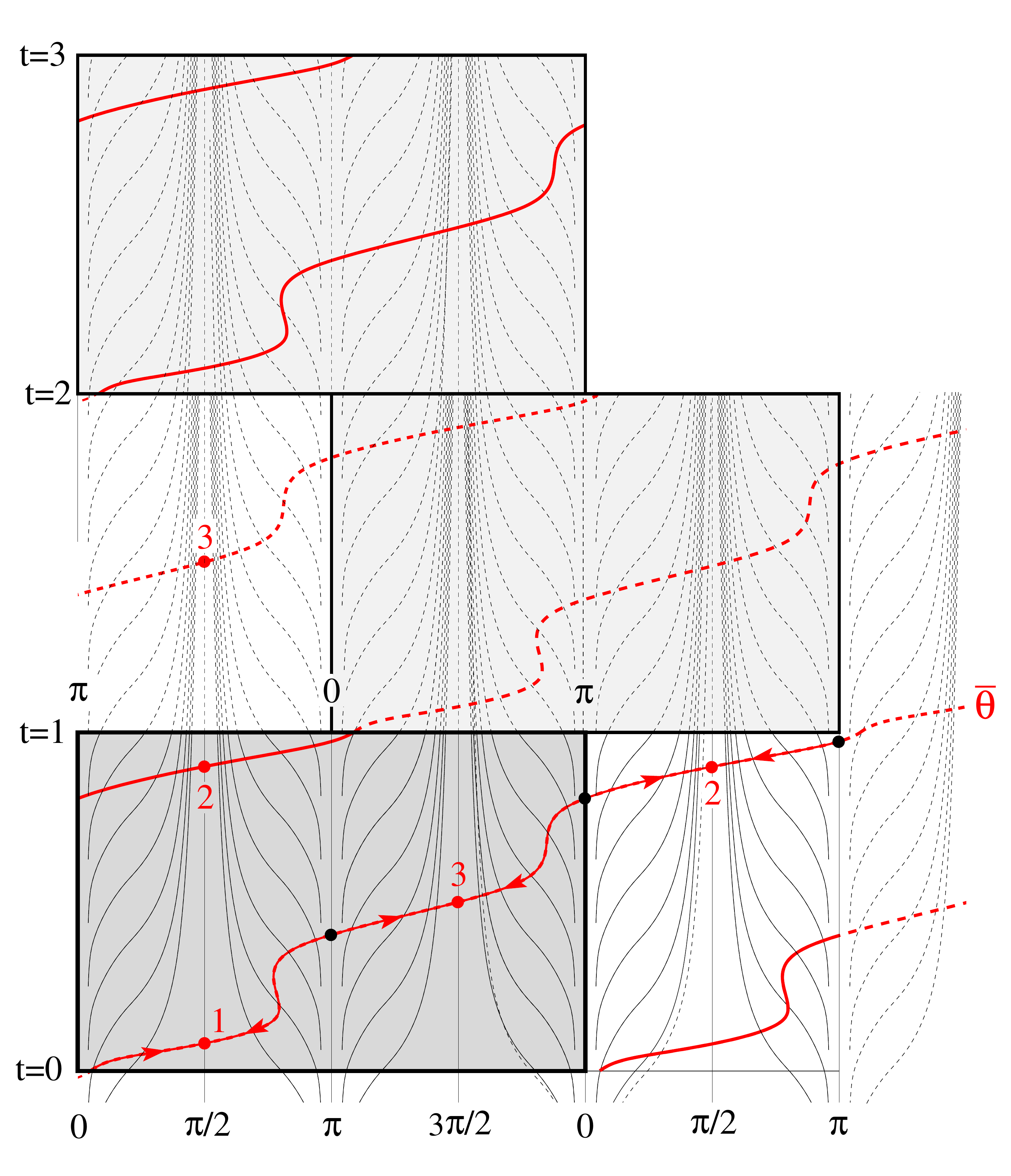}
   \caption{The dynamics at the boundary.
     }
   \label{flowv}

  \bigskip
   
     \parbox{13cm}{ 
      \footnotesize 
      \qquad
   The figure shows the dynamics of the extension to $\partial\Si$
   of the return map $P$ to the surface of section $\Si$ in a neighborhood
   of a rotating boundary component $\Ga\subset\partial \Si$, 
   when  the periodic orbit $\Ga$ is hyperbolic. 
   Here the map $P$
    has two periodic points in $\Ga=\partial\Si$ of
   period 3 which are saddles on $\Si$. The periodic points correspond 
   to the times in which the stable and unstable subspaces intersect the
   tangent space of the section $\Si$ at its boundary.

   \qquad 
   The vertical axis is the time parameter and the periodic orbit $\Ga$ which
   is supposed to have period 1. The three shadowed rectangles are copies 
   of the 2-torus formed by the periodic orbit (the time circle) and the one
   dimensional subspaces orthogonal to the periodic orbit, parametrized
   by their angle with one branch of the stable subspace.
   The movement of the stable subspace $\E^{s}$ is represented 
   by the angles $0$ and $\pi$ and the unstable subspace $\E^u$ by
   the angles $\frac\pi 2$ and $\frac{3\pi}2$.
     
     \qquad
        The periodic orbit has negative eigenvalues, then after one period
   the normal subspaces are identified by a rotation of angle $\pi$. 
   For example, the 0 branch of the  stable subspace $\E^{s}$ is
   identified with the $\pi$ branch of $\E^{s}$. This can be seen in the
   picture as a shift of length $\pi$ in the second shadowed square.
   The black lines are the dynamics of linear subspaces orthogonal to $\Ga$
   under the derivative of the flow, we will call it the projective flow.
   The subspaces converge to the  unstable
   subspace $\E^{u}$ in the future and to $\E^{s}$ in the past.
   
   \qquad
   The transversal lines are the graph of the asymptotic limit $\ov\theta(t)$ of the
   surface of section $\Si$. 
   We have assumed that this graph intersects three times $\E^s$, $\E^u$
   in one period. The dynamics of the extension 
   to the boundary in this figure is given by the return map of the projective 
   flow in the figure, to the graph of the asymptotic direction $\ov \th(t)$. The 
   periodic orbit corresponding to the unstable subspace $\E^u$ is shown in the
   figure with the numbers $1,~2,~3$, in the order of the orbit. It is an 
   attracting periodic orbit, and the stable subspace in $\partial\Si$ 
   gives a repelling periodic orbit. The extension of the return map to 
   $\partial \Si$ is Morse Smale.
   }
  \end{figure}

    The graph of $\oth$,  $\cG(\oth)=\{\,(t,\oth(t))\,|\,t\in \SS^1\,\}$, is transversal to the
    flow lines of ~\eqref{dbe}. 
    We are assuming that the angle $\ov\th(t)$ of $\Si$ turns $3\pi$ in one period
    $t\in[0,1]$. 
    The return map $Q:\cG(\oth)\to\cG(\oth)$ 
    under the flow of the differential equation~\eqref{ebeta}
    for $\be$ is the continuous extension of the return map of $\psi_t$
    to $\Si_1$ to the boundary $[r=0]\approx \cG(\oth)\approx \re/3\pi\Z\subset\partial\Si_1$.
    The return map to $\cG(\oth)$ has periodic points
    at  $\oth=0, \pi, 2\pi$ and at $\oth=-\frac{\pi}2,\frac\pi 2, \frac{3 \pi}2$. The
    periodic orbit at $\oth=0, \pi, 2\pi$ is a repellor and
    the periodic orbit at at $\oth=-\frac{\pi}2,\frac{\pi}2, \frac{3 \pi}2$
     is an attractor.
    Lemma~\ref{lnpp} shows that there are no other periodic points for the return map to
    $\cG(\oth)$.

 \begin{Lemma}\label{lnpp}
 There are no periodic points $(\tau,\oth(\tau))$ for the return map
 $Q:\cG(\oth)\to\cG(\oth)$
 $$
 Q(t,\oth(t))=\big(b(t),\oth(b(t))\big), \quad t\in\re/\Z, \quad \ov{\th}\in \re/3\pi \Z
 $$
  with $\oth(\tau)\notin\frac\pi2\,\Z$.
 The periodic orbit $\oth=0,\pi,2\pi$ is a repellor and the 
 periodic orbit 
 $\oth=-\frac\pi 2,\frac\pi 2,\frac{3\pi}2$ is an attractor.
 \end{Lemma}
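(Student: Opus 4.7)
The plan is to exploit the explicit integrability of equation~\eqref{dbe}: in the coordinate $u=\tan\be$ the ODE linearises to $\dot u=-2\la_t\,u$, so $u(s)=e^{-2(\La_s-\La_{t_0})}u(t_0)$ with $\La_t:=\int_0^t\la_s\,ds$. With $\be(t_0)=\oth(t_0)$ as initial data, the return condition $\be(b(t_0))=\oth(b(t_0))$ collapses to
$$
G(b(t_0))=G(t_0), \qquad G(t):=\tan(\oth(t))\,e^{2\La_t}.
$$
Near the poles of $\tan\circ\oth$ the parallel analysis will be carried out with $H(t):=\cot(\oth(t))\,e^{-2\La_t}$, for which likewise $H(b(t_0))=H(t_0)$. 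The two charts $G$ and $H$ between them cover the circle $\oth\in\re/\pi\Z$.

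For the absence of extra periodic points, I will use the quasi-periodicity forced by $\oth(t+1)=\oth(t)+3\pi$ and the $\pi$-periodicity of $\tan$: these give $G(t+1)=e^{2\La_1}G(t)$ and $H(t+1)=e^{-2\La_1}H(t)$. A $k$-periodic point of $Q$ satisfies $b^{\,k}(\tau)=\tau+p$ for some positive integer $p$, so iterating the return identity forces
$$
G(\tau)=G(\tau+p)=e^{2p\La_1}G(\tau)
$$
(and the analogous statement for $H$). Since $\La_1>0$ by hyperbolicity of $\Ga$, this requires either $G(\tau)=0$ or $H(\tau)=0$, i.e.\ $\oth(\tau)\in\tfrac\pi2\Z$, which is the first assertion of the lemma.

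For the stability of the two surviving period-$3$ orbits I will differentiate the implicit identity $G(b(t))=G(t)$ at each fixed point. At a zero $\tau_i$ of $G$ one has $\sec^2\oth(\tau_i)=1$, whence $G'(\tau_i)=\oth_t(\tau_i)\,e^{2\La_{\tau_i}}$; transversality of $\cG(\oth)$ to the $\be$-flow at $\oth=0$ (where the flow is ``horizontal'') forces $\oth_t(\tau_i)\ne 0$. Listing the three zeros per period as $\tau_1<\tau_2<\tau_3<\tau_4=\tau_1+1$ and using $b'(\tau_i)=G'(\tau_i)/G'(\tau_{i+1})$, the product telescopes via the $1$-periodicity of $\oth_t$:
$$
(Q^3)'(\tau_1)=\prod_{i=1}^{3}\frac{\oth_t(\tau_i)}{\oth_t(\tau_{i+1})}\,e^{2(\La_{\tau_i}-\La_{\tau_{i+1}})}=e^{-2\La_1}.
$$
The analogous calculation with $H$ at the $\oth=\pi/2$ orbit produces $(Q^3)'=e^{+2\La_1}$. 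Since $\La_1>0$, exactly one of the two products lies in $(0,1)$ and the other in $(1,\infty)$, identifying the hyperbolic attractor and the hyperbolic repellor asserted by the lemma.

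The main subtlety I anticipate is the behaviour of $G$ and $H$ across their singularities, which occur where $\oth$ passes through $\tfrac\pi2\pmod\pi$ (resp.\ $0\pmod\pi$): I plan to treat each periodic orbit in the chart ($G$ or $H$) that is regular near it, so that the implicit function theorem applies cleanly. Transversality, rather than monotonicity of $\oth$, is what makes the telescoping above work, because it is transversality that prevents $\oth_t$ from vanishing at the fixed points; the non-monotonicity of $\oth$ admitted in the hypothesis is therefore not an obstacle.
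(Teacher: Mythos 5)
Your argument is correct in substance and amounts to a sharpened, quantitative version of the paper's proof. For the non-existence of further periodic points both arguments rest on the same explicit integration of~\eqref{dbe}, namely $\tan\be(t)=e^{-2\La_t}\tan\be(0)$: the paper phrases the contradiction dynamically (a periodic point with $\oth(\tau)\notin\frac\pi2\Z$ would give a periodic solution of~\eqref{dbe} not contained in the invariant lines, contradicting the convergence of every such solution to the unstable direction), while you phrase it through the first integral $G(t)=\tan(\oth(t))\,e^{2\La_t}$ together with the scaling $G(t+1)=e^{2\La_1}G(t)$ forced by $\oth(t+1)=\oth(t)+3\pi$. These are the same mechanism. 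For the attractor/repellor statement the paper argues only qualitatively (the crossings with the subspace to which all nearby directions converge are attracting); your telescoping computation of $(Q^3)'$ actually produces the multiplier and hence hyperbolicity of the boundary periodic points of $Q$, which is a genuine bonus.

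Two caveats. First, your labels come out \emph{opposite} to the lemma as stated: in the coordinates of \S\ref{Coord} (where $e_u=\partial_x$, $u_0=(1,0)$ and $\La_1>0$), the zeros of $G$, i.e.\ $\oth\in\pi\Z$, are the crossings of $\cG(\oth)$ with $\E^u$, and your multiplier $e^{-2p\La_1}<1$ makes that orbit attracting, whereas the lemma calls $\oth=0,\pi,2\pi$ a repellor. This is not an error in your computation: the paper's own proof asserts that the line $\oth=\frac\pi2$ corresponds to $\E^u$, which is inconsistent with the conventions of~\eqref{coords} under which your $\tan$-formula is derived. The convention-free statement, which both proofs establish, is that the crossings with $\E^u$ form an attracting periodic orbit and the crossings with $\E^s$ a repelling one; you should state explicitly which convention you use and reconcile it with the statement. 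Second, the identity $b'(\tau_i)=G'(\tau_i)/G'(b(\tau_i))$ telescopes only once you know which zero of $G$ the first return $b$ sends $\tau_i$ to; this is not literally $\tau_{i+1}$ if directions are matched mod $2\pi$ rather than mod $\pi$, and $\oth$ need not be monotone. But since the lines $\be\in\frac\pi2\Z$ are flow-invariant, $b$ permutes the zeros of $G$ cyclically and $b^3(\tau)=\tau+p$ for some integer $p\ge1$, so the product always telescopes to $e^{-2p\La_1}$; only the sign of the exponent matters, and your transversality remark (from~\eqref{tht}, $\oth_t<0$ at $\oth\in\frac\pi2\Z$) correctly guarantees that all the factors $\oth_t(\tau_i)$ are nonzero of the same sign. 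With these two points addressed, the proof is complete.
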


\begin{proof}
Observe that since $t\mapsto \la_t$ is a 1-periodic function, the equation~\eqref{dbe} defines a 1-periodic flow $\phi_t(s,\be(s))=(s+t,\be(s+t))$ on $\re\times\re/2\pi\Z$.

Since the line $\oth=\frac\pi 2$ corresponds to the unstable subspace $\E^u$ of $\Ga$ and $t\mapsto\be(t)$ describes the dynamics of the linearized flow on 1-dimensional subspaces along $\Ga$,
we have that
\begin{equation}
\begin{aligned}\label{perio}
0<\be(0)<\pi \quad &\then \quad
\lim_{t\to+\infty}\be(t)=\tfrac\pi 2,
\\
\pi<\be(0)<2\pi \quad &\then \quad
\lim_{t\to+\infty}\be(t)=\tfrac{3\pi}2.
\end{aligned}
\end{equation}
This implies that the periodic orbit $\oth=-\frac\pi 2,\frac\pi 2,\frac{3\pi}2$  for $Q$ is an attractor. 
Similarly, the periodic orbit  $\oth=0, \pi, 2\pi$ is a 
repellor because it corresponds to the  stable subspace $\E^{s}$.

Suppose that there is a periodic point $(\tau,\oth(\tau))$ of the return map $Q$
with $\oth(\tau)\notin-\frac\pi 2 \Z$.  Then there is
$n\in\Z^+$ such that $Q(\tau,\oth(\tau))=(\tau+n,\oth(\tau))$. The solution $\be$ of~\eqref{dbe}
with $\be(\tau)= \oth(\tau)$ satisfies $\be(\tau+n)= \oth(\tau)$ and hence it is $n$-periodic.
This contradicts~\eqref{perio}.
\end{proof}

 \begin{figure}[h]
   \includegraphics[scale=.45]{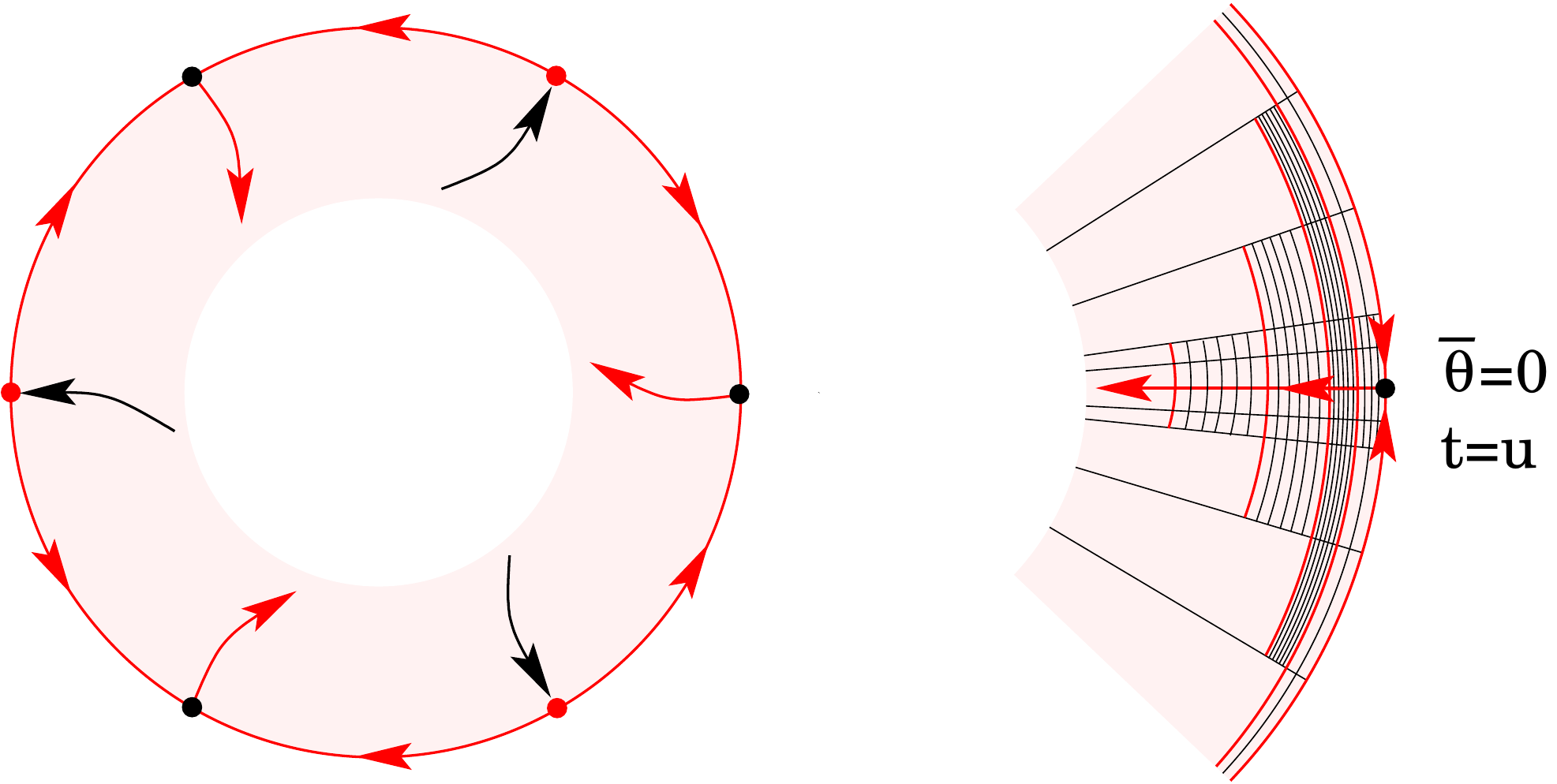}
   \caption{\footnotesize The figure at the left shows the dynamics of the return map $P$
   to the surface of section $\Si_1$. The figure at the right illustrates the construction of the
   conjugacy to a hyperbolic periodic point.}
   \label{sec1}
   \end{figure}

\quad
\newpage

 \begin{Proposition}\quad
 
The periodic points at a hyperbolic rotating boundary of a surface of section are saddles
for the return map.
 \end{Proposition}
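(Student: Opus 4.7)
The plan is to exploit the explicit form of the return map near the boundary and the invariance of the hyperbolic invariant manifolds. In the coordinates of \S\ref{Coord} and using the parametrization $G(r,t)=(r\,u_{\om(r,t)},t)$ of $\Si_1$, the return map has the triangular form $P(r,t)=(a(r,t),b(t))$ from equation \eqref{Poin}. The Jacobian along the boundary $\{r=0\}$ is
\[
dP_{(0,t)}=\begin{pmatrix} \partial_r a(0,t) & \partial_t a(0,t) \\ 0 & b'(t) \end{pmatrix},
\]
so the eigenvalues at a boundary periodic point $(0,\tau)$ are $b'(\tau)^{k}$ along the boundary direction and $\partial_r a(0,\tau)^{k}$ in the transverse direction, where $k$ is the period. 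Saddleness reduces to showing that these two eigenvalues lie on opposite sides of $1$ in modulus.

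First I would analyze the boundary eigenvalue. By Lemma~\ref{lnpp} the periodic points of $Q$ on $\cG(\oth)$ are exactly the times $\tau$ where $\oth(\tau)\in\{0,\pi,2\pi\}$ (the stable directions $\E^s$, which are repelling on $\partial\Si_1$, so $|b'(\tau)^k|>1$) or $\oth(\tau)\in\{-\tfrac\pi2,\tfrac\pi2,\tfrac{3\pi}2\}$ (the unstable directions $\E^u$, which are attracting, so $|b'(\tau)^k|<1$). This follows from \eqref{perio}, which says that every nonstationary solution of \eqref{dbe} converges forward to the $\E^u$ angles and backward to the $\E^s$ angles.

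Next I would identify the transverse eigenvalue via the flow-invariant manifolds $W^{s,u}(\Ga)$. When $\oth(\tau)=\tfrac\pi2$ or $\tfrac{3\pi}2$, the limit tangent direction of $\Si_1$ at $(0,\tau)$ coincides with the unstable axis, so the connected component of $W^u(\Ga)\cap \Si_1$ through this boundary point is a smooth curve transverse to $\partial\Si_1$ entering $\interior(\Si_1)$. Because $W^u(\Ga)$ is flow-invariant, this curve is $P^k$-invariant; because points in $W^u(\Ga)$ spread from $\Ga$ with rate $\exp\!\int_0^k\la_s\,ds>1$, the induced dynamics is expanding, i.e. $\partial_r a(0,\tau)^k>1$. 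Combined with $|b'(\tau)^k|<1$ this yields a saddle. The symmetric argument at $\oth(\tau)\in\{0,\pi,2\pi\}$ uses $W^s(\Ga)\cap\Si_1$ as the transverse invariant manifold, giving $0<\partial_r a(0,\tau)^k<1$ together with $|b'(\tau)^k|>1$: again a saddle, with the roles of the stable and unstable directions of $P^k$ interchanged.

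The only subtlety is that $P$ is, a priori, only a topological conjugacy extension of the smooth interior return map to the boundary; however the transverse invariant manifold $W^{s,u}(\Ga)\cap\Si_1$ is smooth and the contraction/expansion rates on it are determined by the linearized Reeb flow along $\Ga$, which are precisely the Floquet multipliers $e^{\pm k\int_0^1\la_s\,ds}$. Thus the two eigenvalues realize Definition~\ref{dhyp} in the form $(\la,\la^{-1})$ (or a positive power thereof, depending on the sign of $b'$ and whether $\Ga$ is positive or negative hyperbolic), completing the proof that each periodic point on a hyperbolic rotating boundary orbit is a saddle for $P$.
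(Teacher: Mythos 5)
Your overall mechanism is the right one — at the boundary periodic points where the limit direction of $\Si_1$ is the unstable (resp.\ stable) axis, the extension is attracting (resp.\ repelling) along $\partial\Si_1$ by Lemma~\ref{lnpp}, while the transverse invariant curve $W^u(\Ga)\cap\Si_1$ (resp.\ $W^s(\Ga)\cap\Si_1$) carries expanding (resp.\ contracting) dynamics — and this is exactly the dichotomy the paper exploits. The genuine gap is in the last step. Being of saddle type in the sense of Definition~\ref{dhyp} is not an eigenvalue condition: it is the existence of a local chart $h$ with $h\circ P^k\circ h^{-1}=(x,y)\mapsto(\la x,\la^{-1}y)$ on a half-neighborhood. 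Your argument produces, at best, two formal eigenvalues on opposite sides of $1$, and never bridges from that to the required local topological conjugacy. To complete your route you would need (i) that the extension $P(r,t)=(a(r,t),b(t))$ of \eqref{Poin} is genuinely $C^1$ up to $r=0$ — the paper only constructs a continuous extension, and your Jacobian $dP_{(0,t)}$ is asserted rather than derived (note also that $\partial_t a(0,t)\equiv 0$, since $a(0,\cdot)\equiv 0$, so the matrix is diagonal, not merely triangular) — and (ii) a Hartman--Grobman type linearization at a fixed point on the boundary of a half-open surface. Neither is free, and your closing paragraph, which observes that the rates on the invariant curves are Floquet multipliers, only controls the dynamics on two one-dimensional invariant sets, not on a two-dimensional neighborhood of the fixed point.

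The paper avoids both difficulties by constructing the conjugating chart directly from two transverse $P$-invariant foliations: the radial foliation $\F=\cF\cap\Si_1$ by the curves $G(]0,\e[,t)$, permuted according to $b(t)$, supplies one family of coordinate lines; a second family $\A$ is manufactured by taking a small curve $\a$ transversal to $W^u(\Ga)\cap\Si_1$, interpolating a foliation between $\a$ and $P^{-3}(\a)$, and iterating backwards — the $\la$-lemma guarantees that the leaves $P^{-3n}(\A)$ accumulate on $\partial\Si_1$, so the two foliations fill a neighborhood of the boundary fixed point and give coordinates in which $P^3$ is the standard saddle on two sectors. If you wish to keep the derivative-based approach, you must either carry out the blow-up argument showing $a(\cdot,t)$ is $C^1$ at $r=0$ with the claimed derivative and then invoke a boundary version of Hartman--Grobman, or else replace the eigenvalue computation by a foliation construction of this kind.
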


\begin{proof}

 In a neighbourhood of the periodic orbit $\Ga$, the foliation of the surface $\Si_1$
 \linebreak
  whose
 leaves are $G(]0,\e[,t)$, $t\in[0,1]$ is invariant under the return map $P:\Si_1\to\Si_1$.
 Let 
 \linebreak
 $u,s\in[0,1]$ be such that $\oth(u)\in \{0,\pi\}$ and $\oth(s)\in\{\tfrac\pi2,\frac{3\pi}2\}$.
 Then $G(]0,\e[,u)$ and $G(]0,\e[,s)$ are components of $W^u(\ga)\cap\Si_1$ 
 and $W^s(\ga)\cap\Si_1$ respectively.
 For such  $u$'s, using formula~\eqref{Poin},
 the third iterate $P^3$ of return map $r\mapsto a(r,u)\in[0,\e[$, which 
 is the dynamics   in $W^u(\ga)\cap\Si_1$, is expanding with fixed
 point $r=0$ and on the components of  $W^s(\ga)\cap\Si_1$ it is a contraction with fixed point at the boundary
 of $\Si_1$.

   \begin{figure}[h]
   \includegraphics[scale=.3]{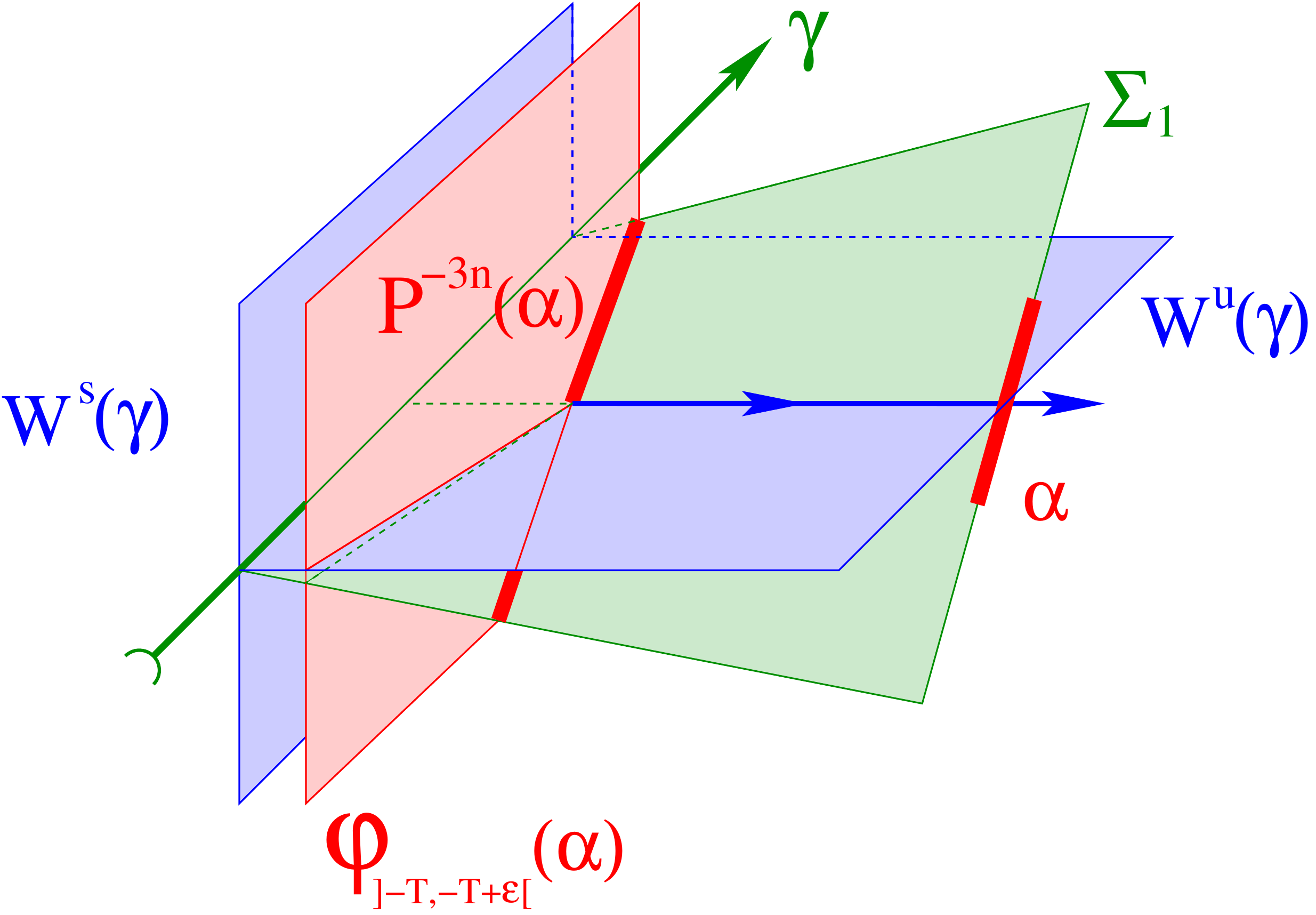}
   \caption{\footnotesize The figure shows that the iteration $P^{-3n}(\a)$ by the return map 
   to the section $\Si_1$ of a curve $\a\subset\Si_1$ which is transversal to the intersection
   $\Si_1\cap W^u(\ga)$.  The backward iteration of $\a$ under a long time $\vr_{]-T,-T+\e[}(\a)$
   is a surface which approaches the stable manifold $W^s(\ga)$. Its intersection with 
   $\Si_1$ is the return $P^{-3n}(\a)$, which in a small neighborhood of $\Si_1\cap W^u(\ga)$
   converges to the boundary $\partial\Si_1=\ga$.}
   \label{alfa}
   \end{figure}

 Consider a small curve $\a$ transversal to $W^u(\ga)\cap\Si_1$ as in figure~\ref{alfa}.  
 The inverse image $P^{-3}(\a)$ intersects a larger set 
 of leaves of the $P$-invariant foliation $\F=\cF\cap\Si_1$,
 this depends only on the dynamics of $b(t)$.
 Extend $\{\a,\,P^{-3}(\a)\}$ to a 1-dimensional foliation $\A$ on $\Si_1$ 
 between $\a$ and $P^{-3}(\a)$.
 By the $\la$-lemma, the backward flow $\phi_{-T}(\phi_{[0,\e]}(\a))$ of $\phi_{[0,\e]}(\a)$
 approaches in the $C^1$
 topology to the stable manifold $W^s(\ga)$.
 The intersection of $\phi_{-T}(\phi_{[0,\e]}(\a))$ with $\Si_1$ are 
  leaves of $P^{-3n}(\A)$, which  approach the boundary of $\Si$.
 Extend the foliation by iteration to a neighbourhood  $\cup_{n\in{\mathbb N}} P^{-3n}(\A)$ of the
 fixed point  at the boundary $r=0$, $t=u$. Use the foliations $\F$ and $\A$  as in figure~\ref{sec1},
  to construct a coordinate system in a neighbourhood of the fixed point $r=0$, $t=u$ which conjugates the dynamics to
 two sectors of a saddle fixed point. A similar construction can be made in a neighbourhood
 of the periodic points $r=0$, $t=s$.

\end{proof}

\section{The complete system for geodesic flows.}
\label{sbb}

    \begin{figure}
   \includegraphics[scale=.5]{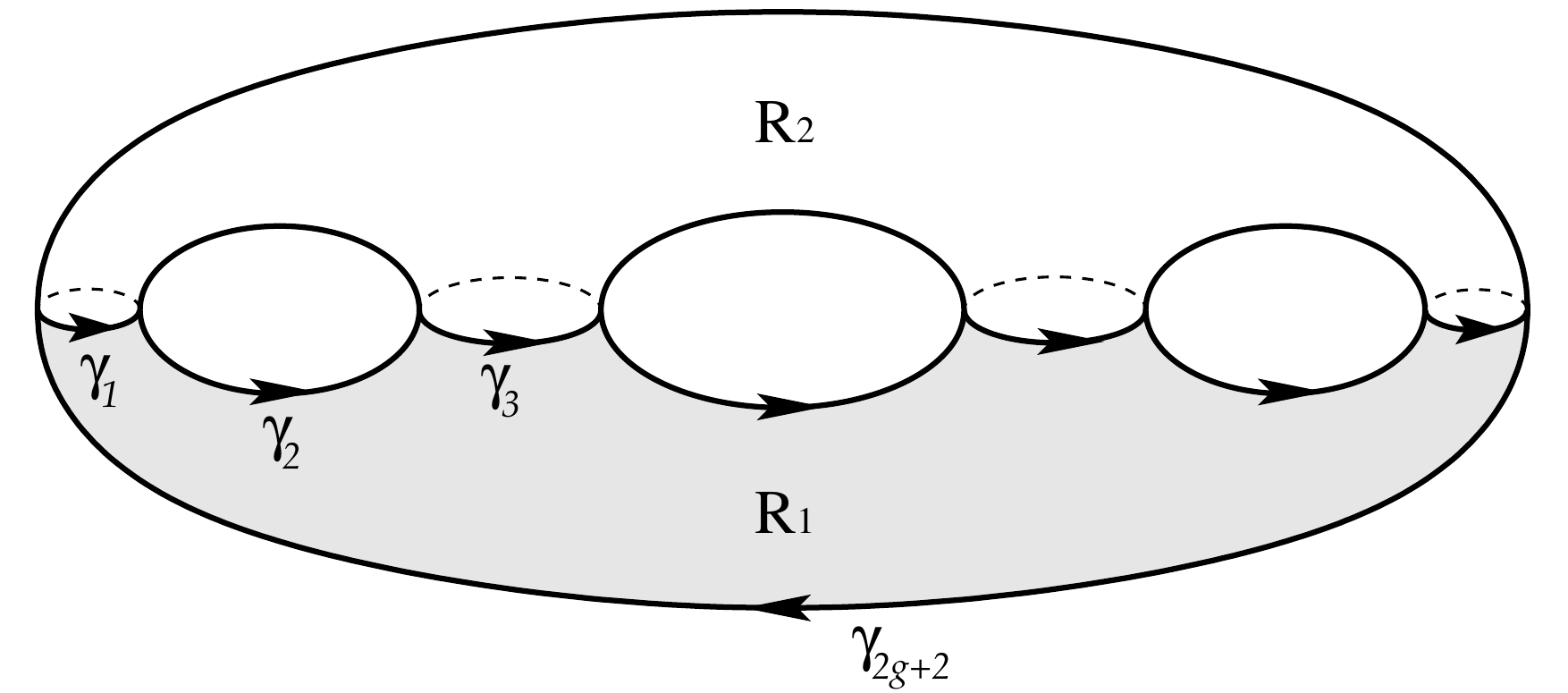}
   \caption{\footnotesize The minimizing geodesics in the homotopy classes of $\ga_i$
   separate $M$ into four simply connected regions $R_j$ and the smoothing of 
   their Birkhoff annuli 
   $$
   \cT=\partial R_1\cup\partial R_3, \qquad -\cT =\partial R_2\cup \partial R_4,
   $$ 
   are two embedded surfaces of section of genus 1. The orbits $\dga_i$, $-\dga_i$ are 
   simply covered rotating boundary 
   orbits for $\cT$ and $-\cT$. }
   \label{surface}
   \end{figure}

The set of ideas in this section descent from G. Birkhoff, notably
\cite{Birk3} section \S28.
By using an orientable double cover of $M$ if necessary, 
for theorem~\ref{TA} it is enough to assume that the surface $M$
is orientable.

We denote $SM=\{(x,v)\in TM : \rho(v,v)=1\}$ the unit tangent bundle,
$\pi:SM\to M$ the projection, $\phi_t$ the geodesic flow on $SM$
and $SA = \pi^{-1}(A)\cap SM$ for every $A\subset M$.

\subsection{Two surfaces of section of genus 1.}
\label{genus1}
\quad

Let $\ga_1,\ldots, \ga_{2g+2}$ be 
minimizing geodesics in the homotopy classes
of the curves shown in figure~\ref{surface}. 
We show now that they  divide the surface $M$ into four regions
$R_1,\ldots,R_4$ which are simply connected.

A bigon is a simply connected open subset of $M$ whose
boundary is two geodesic segments.
Two minimizing geodesics in their homotopy classes  can not form a bigon.
\linebreak
Then they must have minimal intersection number in their homotopy classes
\linebreak
c.f. \cite[Prop.~1.7]{FM}.
 Therefore 
 $$
 |\#(\ga_i\cap\ga_j)|=\de_{i,j-1}+\de_{i,j+1}
\quad  \text{ if }\quad i\ne j.
 $$ 
 Now $M\setminus(\ga_1\cup\ga_3\cup\cdots\cup \ga_{2g+1})$ is the union of
two surfaces $N_1$, $N_2$ of genus zero with $2g+1$ boundary components.
The segments $\ga_{2i}\cap N_j$ are curves connecting the boundary components
 $\ga_{2i-1}$ and $\ga_{2i+1}$. They form two simple closed curves bounding
 two simply connected regions $R_{2j-1}$, $R_{2j}$. 

Let $\cJ:TM\to TM$ be a linear map such that $(v,\cJ v)$ is an oriented orthonormal basis for every
unit vector $v$. 
Given an oriented simple closed geodesic $\ga$,  define the Brikhoff annulus of $\dga$
as
$$
A(\dga):=\{(x,v)\in SM\;|\;\exists t,\;  x=\ga(t),\; \langle v ,\cJ\dga(t)\rangle \ge 0\,\}.
$$
Then $A(\dga)$ is an annulus in $SM$ with boundaries $\dga$, $-\dga$ 
whose interior is  transversal to the geodesic flow. 
Because other  geodesics intersecting $\ga$ must be tranversal to $\ga$.

    \begin{figure}
   \includegraphics[scale=.25]{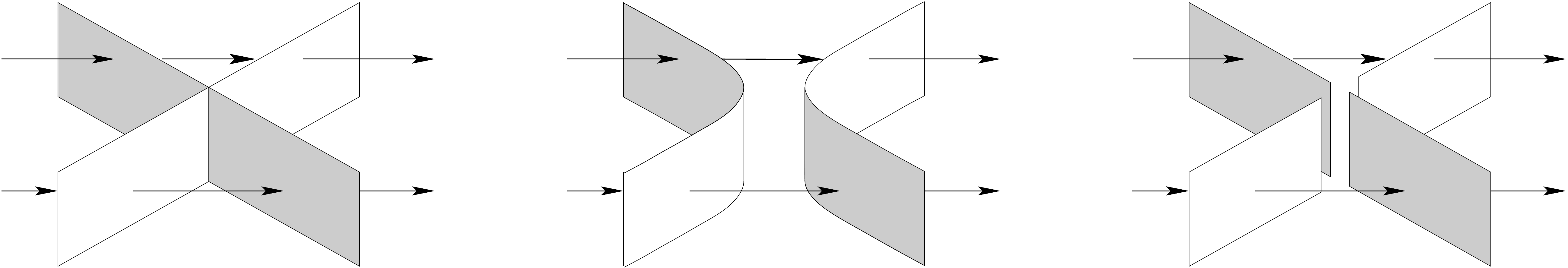}
   \caption{\footnotesize Fried surgery for a double crossing. 
   The new surface in the center does not self intersect and is transversal to the flow.
   The right figure shows that the surgery is obtained by cutting the surfaces along 
   two segments and gluing them. The gluing is uniquely determined by the contitions
   of transversality to the flow and non intersection.}
   \label{Fried1}
   \end{figure}

       \begin{figure}
   \includegraphics[scale=.25]{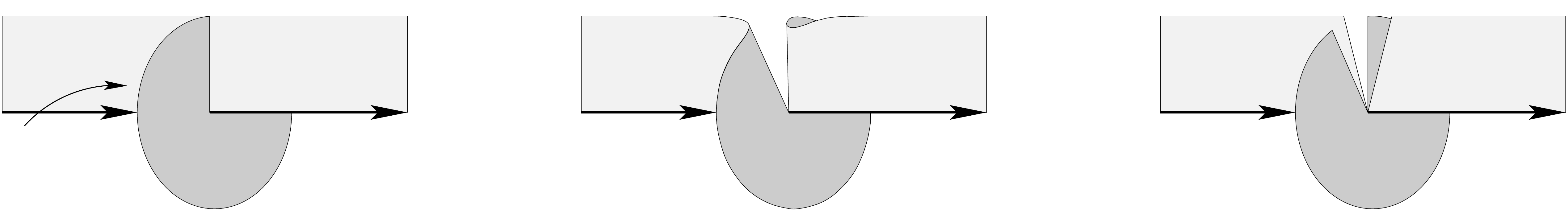}
      \caption{\footnotesize  Fried surgery for an intersection of a boundary orbit.
      The surgery is obtained by cutting along two segments and gluing. The gluing
      is uniquely determined by the flow. The resulting surface can be realized in 
      an arbitrarily small neighborhood of the original surfaces. At interior points
      it is the same surgery as in figure~\ref{Fried1}.
      }
      \label{Fried2}
   \end{figure}
  
We perform the Fried surgeries described in figures~\ref{Fried1}, \ref{Fried3}
to the collection of Birkhoff annuli $A(\dga_1),\ldots,A(\dga_{2g+2}), 
A(-\dga_1),\ldots,A(-\dga_{2g+2})$. Observe that there are not triple intersections 
of the interior of these  annuli because there are no triple intersections of their 
projected geodesics. We need to use the surgery in figure~\ref{Fried3} instead of
figure~\ref{Fried2} because the annuli $A(\dga_i)$, $A(-\dga_i)$ meet at their boundaries.

    \begin{figure}
   \includegraphics[scale=.245]{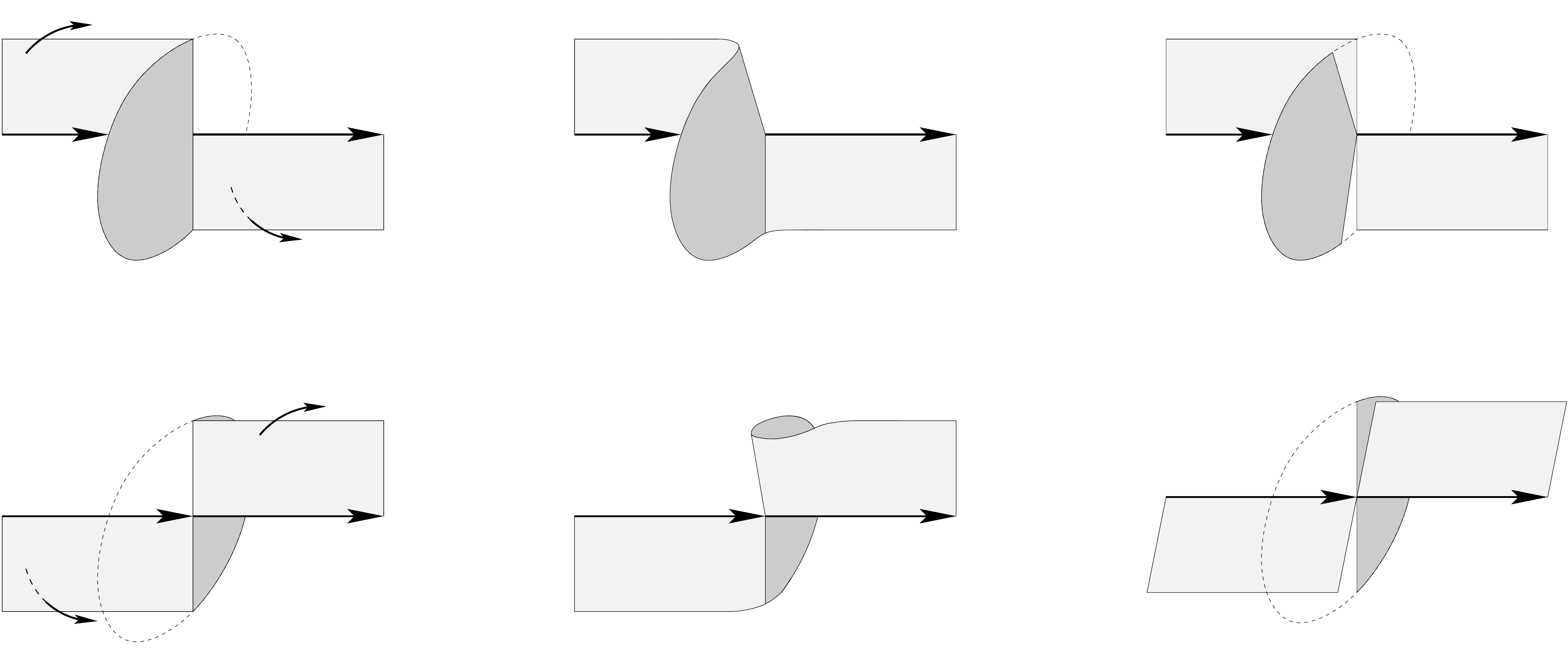}
   \caption{\footnotesize
    The Birkhoff annuli $A(\dga_i)$ and $A(-\dga_i)$ intersect at $\dga_i$ and $-\dga_i$, and the orbit 
    $\dga_i$ intersects transversely the annulus $A(\dga_{i+1})$ so it is necessary to perform the surgery in 
    figure~\ref{Fried2} twice.}
   \label{Fried3}
   \end{figure}

We prove that the result are two surfaces of section $S_1$, $S_2$, of genus~1,
each of them with the $4g+4$ 
boundary components $\{\dga_1,\ldots,\dga_{2g+2},-\dga_1,\ldots,-\dga_{2g+2}\}$. 
Observe that any orbit $\Ga$ with \begin{equation*}
\forall i \; \pi\Ga\ne \ga_i 
\quad\text{and}\quad
\pi\Ga\cap\cup_{i=1}^{2g+2}\ga_i\ne\emptyset
\end{equation*}
 intersects $S_1$ or $S_2$ transversely.

    \begin{figure}
   \includegraphics[scale=.4]{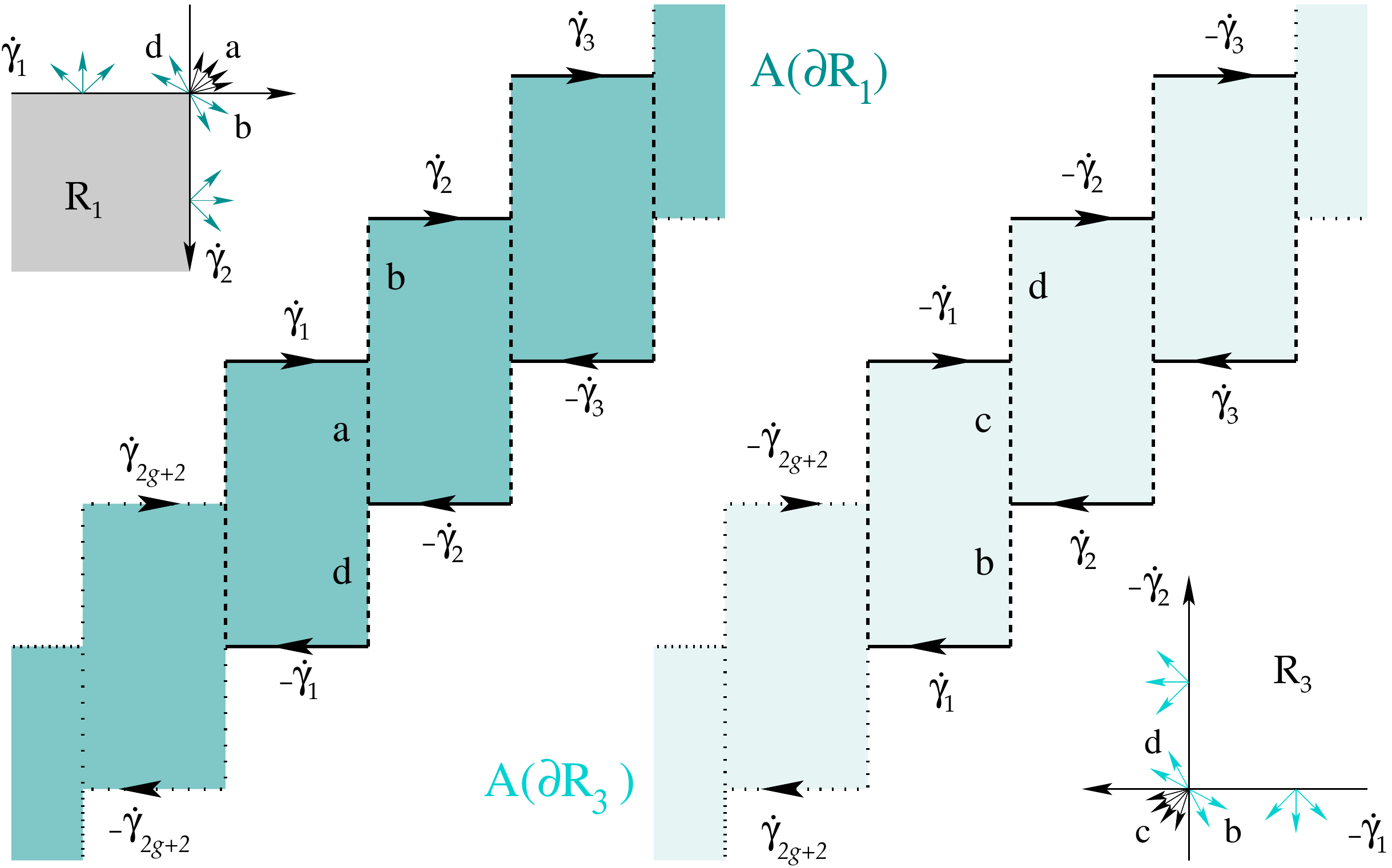}
   \caption{\footnotesize 
   The sets $A(\partial R_1)$ and $A(\partial R_3)$ are a collage 
   of half of the Birkhoff cylinders $A(\dga_i)$ and $A(-\dga_i)$ respectively.
   Both $A(\partial R_1)$ and $A(\partial R_3)$ are cylinders.}
      \label{BR1}
       \vskip 1cm
   \includegraphics[scale=.4]{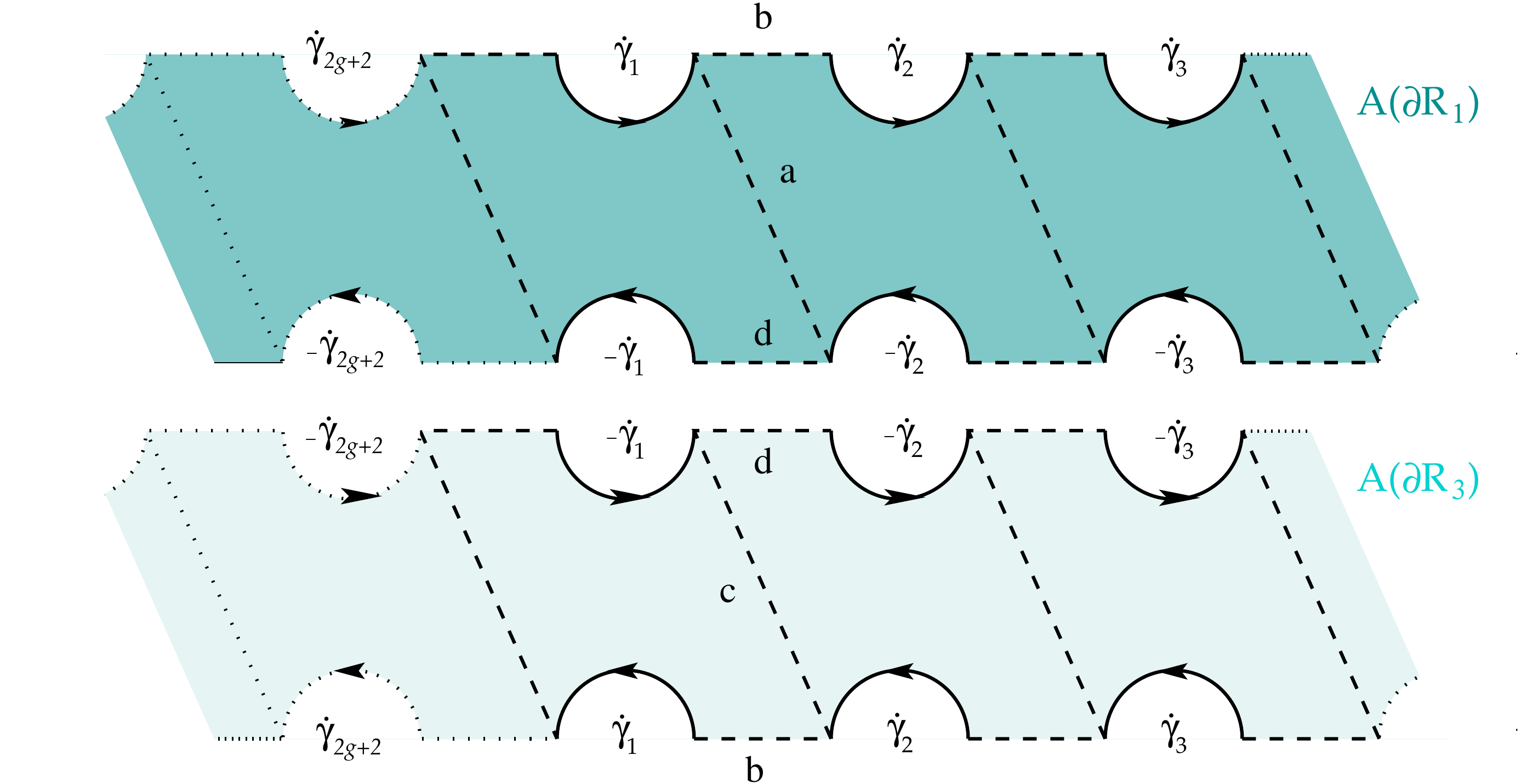}
   \caption{\footnotesize This is the same as figure~\ref{BR1} where the segments of 
   $\dga_i$, $-\dga_i$ has been drawn as curves and their adjacent vertical dashed 
   segments have been drawn horizontal. Each of the figures is a cylinder, glued at its sides.
   The two figures are glued at the horizontal dashed lines. They form a torus 
   $\cT=A(\partial R_1)\cup A(\partial R_3)$ 
   with $4g+4$ holes. The complete system contains another torus 
   $-\cT=A(\partial R_2)\cup A(\partial R_4)$ which corresponds
   to a similar construction using the regions $R_2$ and $R_4$ with the vectors opposite 
   to those of $\cT$. Both tori intersect pairwise at their boundaries.}
   \label{BR2}
   \end{figure}
   
Let $A(\partial R_i)$, $1\le i\le 4$ be the closure of the set of unit vectors based at $\partial R_i$
pointing outside of $R_i$. Then each $A(\partial R_i)$ is a cylinder whose boundary projects
to $\partial R_i$. Figure~\ref{BR1} shows the cylinders $A(\partial R_1)$, $A(\partial R_3)$
and how they are glued after performing the surgeries in figures~\ref{Fried1},~\ref{Fried3}.
Figure~\ref{BR3} shows how the surgeries of figure~\ref{Fried3} glue the segments 
$a$ and $b$ in figure~\ref{BR1}.
Then figure~\ref{BR2} is the same a figure~\ref{BR1} with the boundaries curved 
and rotated in order to show how the two cylinders $A(\partial R_1)$, $A(\partial R_3)$
glue after the surgery to form a torus $S_1$ with $4g+4$ holes.
Similarly $S_2$ is obtained from $A(\partial R_2)$ and $A(\partial R_4)$.

       \begin{figure}
   \includegraphics[scale=.45]{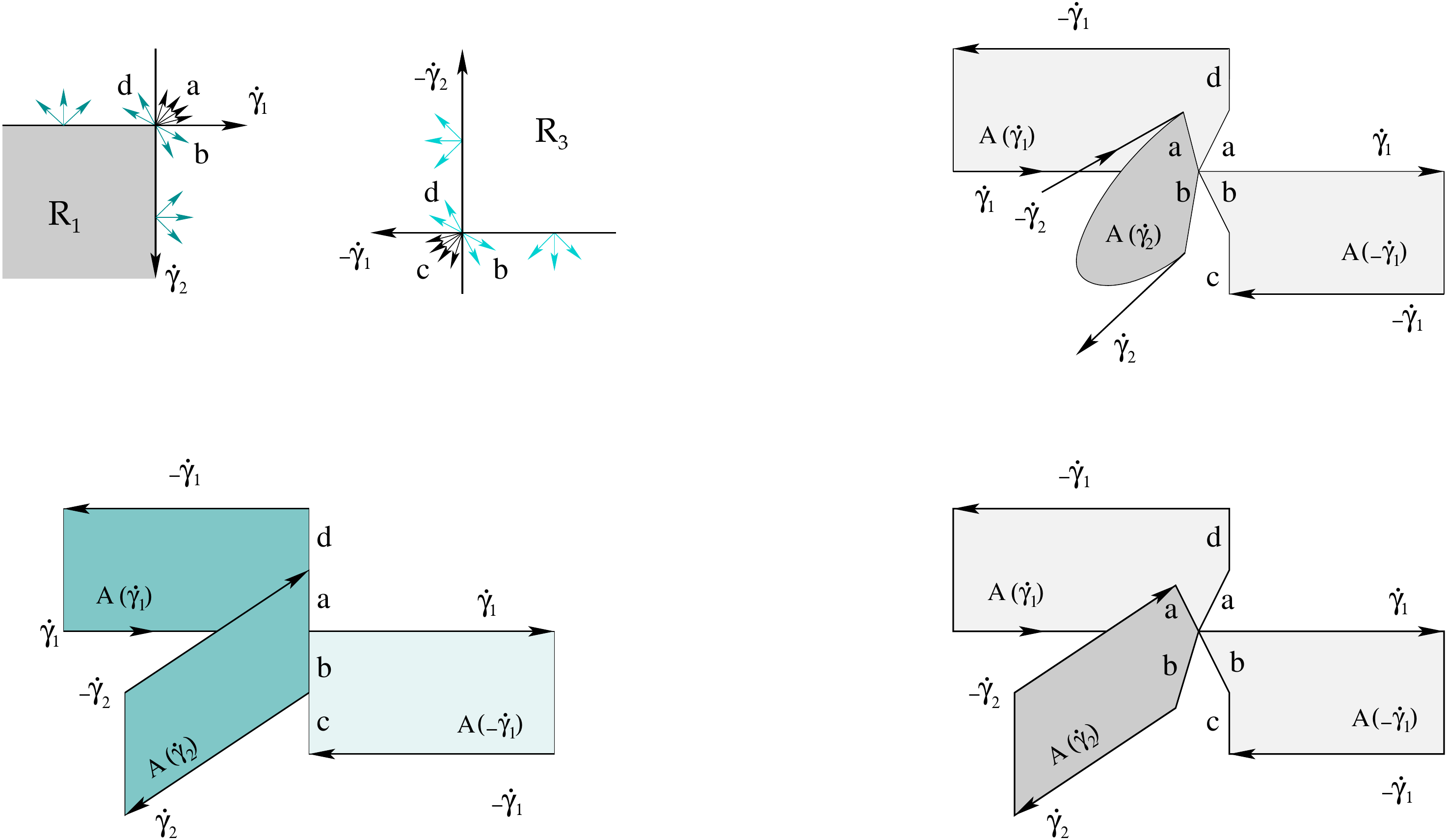}
   \caption{\footnotesize This figure shows in more detail how the annuli 
   $A(\dga_i)$, $A(\dga_{i+1})$, $A(-\dga_i)$ are glued in figure~\ref{BR1}
   after the
   surgery in figure~\ref{Fried3}.}
   \label{BR3}
   \end{figure}

       \begin{figure}
   \includegraphics[scale=.4]{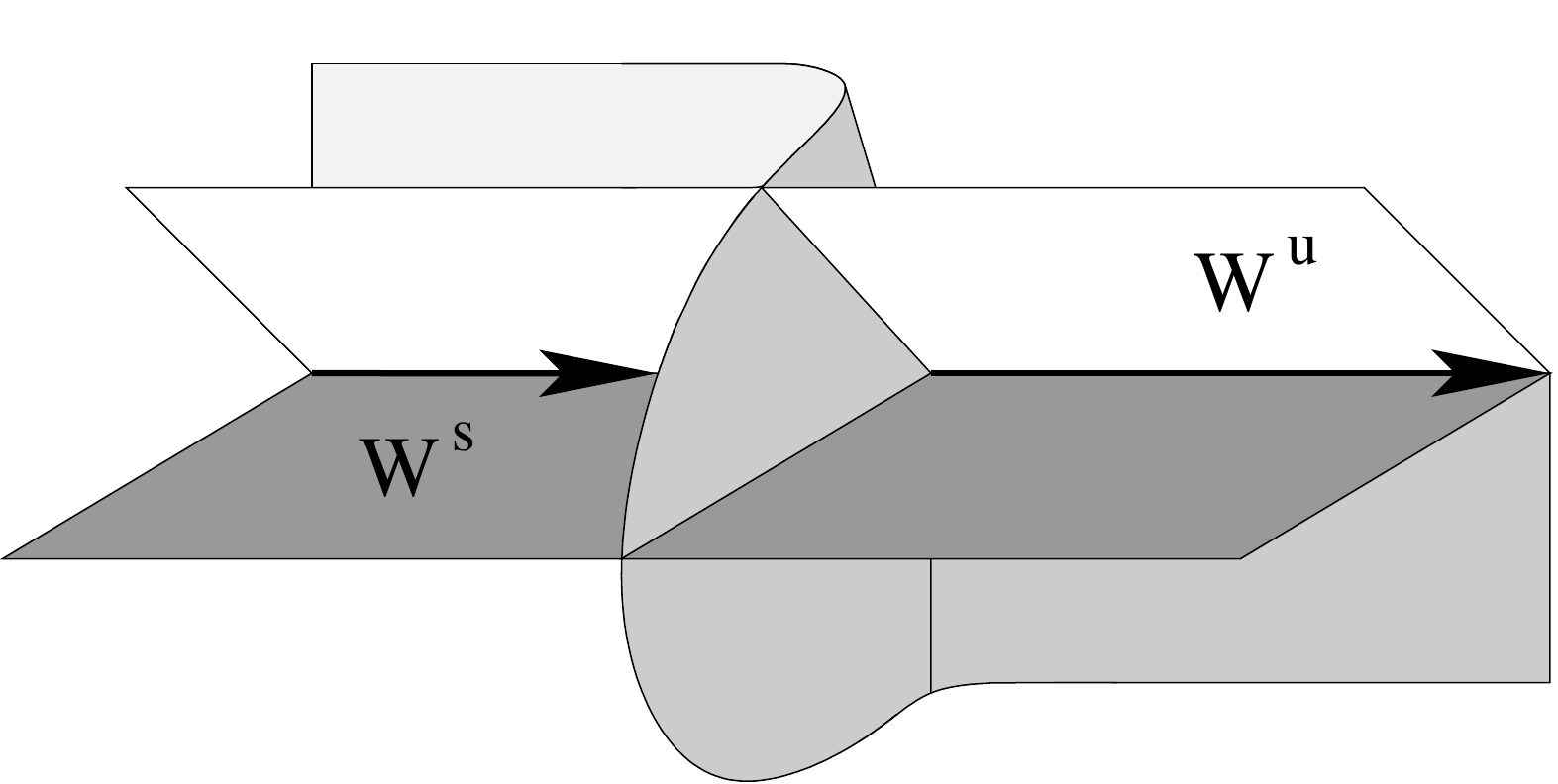}
   \caption{\footnotesize This figure shows how the local invariant manifolds
   $W^s_{loc}(\dot\ga_i)$, $W^u_{loc}(\dot\ga_i)$ intersect the surface 
   $S_j$ over the point $\ga_{i}\cap \ga_{i+1}$. The surface $S_j$
   stays near the Birkhoff annuli $A(\dga_i)$, $A(\dga_{i+1})$, $A(-\dga_i)$. The annulus 
   $A(\dga_i)$ is included in the vertical fibre $S\dga_i$. 
   The local manifolds do not intersect $S\dga_i\setminus\dga_i$
    because  $\ga_i$ 
   has no conjugate points.
   There are other intersections over the point $\ga_{i-1}\cap \ga_i$.
     }
   \label{intw}
   \end{figure}

 Now we prove that the boundary components $\dga_i$, $-\dga_i$
 are hyperbolic and that their local invariant manifolds
 $W^s_{loc}\cup W^u_{loc}$ intersect four times each
 section $S_1$, $S_2$.

 The geodesics $\ga_i$ are minimizers in their homotopy class.
 Since $M$ is a surface their multiples $\ga_i^n(t):=\ga_i(nt)$ are local
 minimizers, because a curve $\eta$ homotopic to $\ga_i^n$ contained in a small
 tubular neighborhood of $\ga_i$ can be separated into $n$ closed curves
 homotopic to $\ga_i$. Then the length $L(\eta)\ge n\cdot L(\ga_i)=L(\ga_i^n)$.
 This implies that the whole geodesic $\ga_i(t)$, $t\in\re$ has no conjugate points. 
 Since $\ga_i$ is non-degenerate, then it must be hyperbolic.
 Since $M$ is orientable $\ga_i$, is positive hyperbolic.

By section~\ref{TBS} the vertical subspace is not invariant.
Then its forward iterates $d\phi_t(V(\dga_i))$ must converge 
to the unstable subspace $E^u(\dga_i)$. But $d\phi_t(V(\dga_i))$ 
can not approach the vertical $V(\dga_i)$ because by section~\ref{TBS}
it would intersect the vertical non trivially, producing conjugate points.
Thus its limit $E^u(\dga_i)=\lim_{t\to+\infty}d\phi_t(V(\phi_{-t}(\dga_i)))$
satisfies 
\begin{equation}\label{EEV}
E^u(\dga_i)\cap V(\dga_i) =\{0\},
\quad \text{ and also } \quad
E^s(\dga_i)\cap V(\dga_i) =\{0\}.
\end{equation}

The tangent space to the Birkhoff annulus $A(\dga_i)$ is
$$
T_{\dga_i}A(\dga_i) = \langle X(\dga_i)\rangle\oplus V(\dga_i),
$$
where $X$ is the geodesic vector field. Then the invariant subspaces $E^s(\dga_i)$, $E^u(\dga_i)$
are bounded away from $T_{\dga_i} A(\dga_i)$. This implies that the local invariant manifolds
$W^s_{\e}(\dga_i)$, $W^u_{\e}(\dga_i)$ do not intersect the interior of $A(\dga_i)$.
Figure~\ref{intw} shows how each of the local invariant manifolds intersect once the surface
$S_j$ over the intersection $\ga_{i-1}\cap\ga_i$ and once more over $\ga_i\cap \ga_{i+1}$.   
By section~\ref{bdrymap} this gives four saddle periodic points for the return map 
at each boundary component $\dga_i$ or $-\dga_i$ of each surface of section $S_1$, $S_2$. 

Observe that there is $\ell >0$ and a neighborhood $N$ of $\partial S_j=\cup_i(-\dga_i \cup \dga_i)$
such that 
$$
\forall z\in N \quad \phi_{]0,\ell[}(z)\cap S_j \ne \emptyset 
\quad \&\quad
\phi_{]-\ell,0[}(z)\cap S_j \ne \emptyset. 
$$
Therefore the orbits $\pm\dga_i$ will be rotating boundary orbits for the sections $S_j$.

\subsection{Applications of the curve shortening flow.}\quad

Here we follow section \S 2 of \cite{CKMS}.
Let $(M,\rho)$ be an oriented riemannian surface.
Let $S^1=\re/\Z$. For an embedding $\ga:S^1\hookrightarrow M$, 
let $\nu_\ga$ be its positively oriented normal vector field  and let $k_\ga$ be the 
curvature of $\ga$. Let $\Emb(S^1,M)$ be the space of smooth embedded circles in  $M$
endowed with the $C^\infty$ topology. Let 
$$
L(\ga) =\int_{S^1} \lV \dga\rV_\rho
$$
be the length functional.
The {\it curve shortening flow}  is a continuous map 
$$
 \cU \longrightarrow \Emb(S^1,M), \qquad 
(s,\ga_0)\mapsto 
\Psi_s(\ga_0)=:\ga_s,
$$
defined on a maximal open neighborhood $\cU\subset [0,\infty[\times\Emb(S^1,M)$
of $\{0\}\times\Emb(S^1,M)$ by the following PDE:
$$
\partial_s\ga_s=k_{\ga_s}\nu_{\ga_s}.
$$
The following properties are proved in \cite{Grayson}, \cite{dpmms20}:
\begin{enumerate}[(i)]
\item $\Psi_0=id$ \quad and \quad$\Psi_s\circ\Psi_t=\Psi_{s+t}$ \quad  for all $s,t\ge 0$.
\item\label{p4}
$\Psi_s(\ga\circ\th)=\Psi_s(\ga)\circ\th$ \quad for all $\ga\in\Emb(S^1,M)$ and  $\th\in\Diff(S^1)$.
\item $\frac{d\,}{ds}L(\psi_s(\ga))\le 0$  for all $\ga\in \Emb(S^1,M)$, 
with equality if and only if the image of $\ga$ is a geodesic.
\item Given $\ga\in\Emb(S^1,M)$ let
$s_\ga=\sup\{\,s\ge 0\,|\,(s,\ga)\in \cU\,\}$. Then $s_\ga$ is finite if and only if 
$\Psi_s(\ga)$ converges to a constant when $s\to s_\ga$.
\end{enumerate}

A path-connected subset $U\subset M$ is {\it weakly convex}
if for any pair $x,y\in U$ that can be joined by an absolutely continuous curve in $U$
of length smaller than  the injectivity radius $\inj(M,g)$, the shortest geodesic
joining $x$ and $y$ is contained in $U$.  Another useful property of $\Psi_s$ is
that it preserves weakly convex sets, namely

\begin{enumerate}[(i)]
\setcounter{enumi}{4}
\item\label{p5} 
If $U\subset M$ is weakly convex then
$$
\ga\in\Emb(S^1,U) \quad \then \quad \forall s\in[0,s_\ga[ \quad \psi_s(\ga)\in\Emb(S^1,U).
$$
\end{enumerate}

This flow is used in \cite{CKMS}  to prove the following lemmata.

\begin{Lemma}[\cite{CKMS} lemma~2.1]\quad\label{L41}

Let $U\subseteq M$ be a weakly convex subset that is  not simply connected.
 Let $\cC\subset \Emb(S^1,U)$  be a connected component containing 
 loops that are non-contractible in $U$. Then, there exists a sequence $\ga_n\in\cC$
 converging in the $C^2$-topology to a simple closed geodesic $\ga$
 contained in $\ov U$ of length
 $$
 L(\ga)=\inf_{\zeta\in\cC}L(\zeta)>0.
 $$
 \end{Lemma}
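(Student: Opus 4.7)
The plan is to produce the minimizer via the curve shortening flow $\Psi_s$, using that the flow preserves both the weakly convex set $U$ and the connected component $\mathcal{C}$, while strictly decreasing length until the curve either shrinks to a point or converges to a geodesic.

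First I would show that $\ell:=\inf_{\zeta\in\mathcal{C}}L(\zeta)>0$. The argument is that any loop in $M$ of length strictly smaller than twice the injectivity radius lies in a geodesically convex ball and is therefore contractible in $M$; for such a loop contained in $U$, the weak convexity of $U$ (applied to the family of midpoints of its chords) ensures the contraction can be realized inside $U$, so it is contractible in $U$. Since curves in $\mathcal{C}$ are non-contractible in $U$, a uniform positive lower bound on their length follows.

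Next, fix a minimizing sequence $\zeta_n\in\mathcal{C}$ with $L(\zeta_n)\to \ell$, and set $\gamma_n^s:=\Psi_s(\zeta_n)$. Property \eqref{p5} keeps $\gamma_n^s\subset U$, and by continuity of $\Psi_s$ together with \eqref{p4} (so that constant-speed reparametrizations do not matter) the family $s\mapsto \gamma_n^s$ stays in the open component $\mathcal{C}$ for all $s\in[0,s_{\zeta_n}[$. If $s_{\zeta_n}$ were finite, property~(iv) would force $\gamma_n^s$ to shrink to a constant curve, which is contractible in $U$ and hence not in $\mathcal{C}$; therefore $s_{\zeta_n}=\infty$. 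Since $L(\gamma_n^s)$ is non-increasing in $s$ and bounded below by $\ell$, it has a limit $\ell_n\in[\ell,L(\zeta_n)]$, and consequently $\ell_n\to\ell$.

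Now the plan is to extract, by a diagonal argument, times $s_n$ for which $\gamma_n:=\gamma_n^{s_n}$ is nearly a geodesic. Concretely, choose $s_n$ so that $L(\gamma_n^{s_n})-L(\gamma_n^{s_n+1})<1/n$; by the formula $\tfrac{d}{ds}L(\Psi_s(\eta))=-\int|k_{\Psi_s(\eta)}|^2\,ds$ this forces an $L^2$-bound on curvature over $[s_n,s_n+1]$, and picking a good time inside that interval produces embedded curves $\gamma_n\subset U$ of length tending to $\ell$ whose curvatures tend to $0$ in $L^2$. Standard CSF regularity (interior estimates on intervals of a priori controlled length with bounded curvature in $L^2$) upgrades this to $C^2$ bounds on the $\gamma_n$. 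Arzelà--Ascoli then yields a $C^2$-convergent subsequence $\gamma_n\to\gamma\subset\overline{U}$, with $\gamma$ a closed curve of length $\ell>0$ and vanishing geodesic curvature, i.e.\ a closed geodesic. Embeddedness of $\gamma$ follows from embeddedness of the $\gamma_n$ together with positive length in the $C^2$ limit: a self-intersection of $\gamma$ would have to be a tangential self-touch, which is ruled out by the local uniqueness of geodesics with prescribed initial conditions and the embeddedness of the approximants. Finally, lower semicontinuity of length gives $L(\gamma)\le\ell$, while $\gamma_n$ are in $\mathcal{C}$ so $L(\gamma)\ge\ell$; hence $L(\gamma)=\ell$.

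The main technical obstacle is the step upgrading the weak $L^2$-smallness of curvature along a shrinking subinterval of flow time to uniform $C^2$ bounds that survive Arzelà--Ascoli, together with the non-degeneracy of the limit (no collapse to a point, no self-tangencies). Both rely on the interior regularity theory for CSF on surfaces and are the technical heart of Grayson's work; granting those, the rest of the argument is an essentially direct variational extraction.
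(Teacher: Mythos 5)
The paper does not prove this lemma itself---it is quoted verbatim from [CKMS, Lemma~2.1]---so there is no internal proof to compare against; your argument is essentially the standard curve-shortening proof used there. The one structural difference is that instead of invoking Grayson's long-time convergence theorem (an embedded curve that does not collapse in finite time flows for all time and converges in $C^\infty$ to a closed geodesic), you extract times where $\tfrac{d}{ds}L$ is small to get $L^2$-small curvature and then appeal to interior parabolic regularity. That is a legitimate variant, but, as you concede, it repackages rather than avoids the hard analytic input.

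One step is genuinely wrong as written: the simplicity of the limit $\gamma$. You rule out a tangential self-touch ``by local uniqueness of geodesics,'' but local uniqueness shows that the two branches \emph{coincide}---which is precisely the scenario in which $\gamma$ is an $m$-fold cover ($m\ge 2$) of a primitive closed geodesic $c$, and nothing you wrote excludes this. The correct argument is: transversal self-intersections are impossible because they are $C^1$-stable and the approximants $\gamma_n$ are embedded; and if $\gamma=c^m$ with $m\ge 2$, then for large $n$ the embedded circle $\gamma_n$ lies in an annular tubular neighborhood of $c$ and is freely homotopic there to $c^m$, contradicting that an embedded circle in an annulus winds $0$ or $\pm 1$ times about the core. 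With that repaired (and granting the Grayson-type regularity you explicitly assume), the argument closes. A minor further quibble: a metric ball of radius less than $\inj(M)$ need not be geodesically convex; the contractibility in $U$ of a short loop should instead be obtained by contracting along the shortest geodesics from a base point to the other points of the loop, which weak convexity keeps inside $U$.
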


 \pagebreak
 
 \begin{Lemma}[\cite{CKMS} lemma~2.2]\label{invconv}\quad
 
 If $U\subset M$ is  weakly convex and $K\subset SM$ is invariant by the geodesic flow
 (i.e. $\forall t\in\re$ $\phi_t(K)=K$)  and such that $\pi(K)\subset U$, then 
 any path-connected component of $U\setminus \pi(K)$ is weakly convex.
\end{Lemma}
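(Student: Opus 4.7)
The plan is to deform the minimizing geodesic from $x$ to $y$ back to the given short path $\sigma\subset V$ through a one-parameter family of short minimizing geodesics emanating from $x$, and to show by a continuity-plus-transversality argument that no member of the family can touch $\pi(K)$.

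Fix $x,y\in V$ and an absolutely continuous $\sigma:[0,1]\to V$ joining them, with $L(\sigma)<\inj(M,\rho)$. I assume (as holds in every application) that $K$ is closed in $SM$, so that $\pi(K)$ is closed in $M$ and hence $V$ is open. Flow-invariance of $K$ gives the key structural fact that $\pi(K)$ is a union of complete geodesics of $M$: each $(q,w)\in K$ generates a full geodesic $t\mapsto \pi(\phi_t(w))$ lying in $\pi(K)$. For each $s\in[0,1]$, set $\eta_s:[0,1]\to M$ to be the unique minimizing geodesic from $x$ to $\sigma(s)$; this is well-defined because $d(x,\sigma(s))\le L(\sigma|_{[0,s]})<\inj(M,\rho)$, and the family $s\mapsto\eta_s$ varies continuously in the $C^1$-topology via $\eta_s(t)=\exp_x(t\,\exp_x^{-1}(\sigma(s)))$. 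Weak convexity of $U$ applied to $\sigma|_{[0,s]}$ forces $\eta_s\subset U$ for every $s$.

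Consider $T:=\{\,s\in[0,1]\,:\,\eta_{s'}\subset V\text{ for all }s'\in[0,s]\,\}$; the goal reduces to proving $T=[0,1]$, since then $\eta_1$ (the unique minimizing geodesic from $x$ to $y$) lies in $V$. Clearly $0\in T$. To see $T$ is open in $[0,1]$ at each of its points: if $s\in T$, the compact set $\eta_s$ lies in the open set $V$, and $\eta_{s'}\to \eta_s$ uniformly as $s'\to s$, so $\eta_{s'}\subset V$ for $s'$ near $s$, whence $[s,s+\epsilon)\subset T$. For the more substantial closedness step, take $s_n\nearrow s^*$ with $s_n\in T$ and suppose for contradiction that $\eta_{s^*}(t^*)=p^*\in\pi(K)$ for some $t^*$. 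The endpoints $x$ and $\sigma(s^*)$ both lie in $V$, so $t^*\in(0,1)$. Choose $w\in K$ with $\pi(w)=p^*$ and let $\beta$ be the geodesic with $\dot\beta(0)=w$, so that $\beta(\re)\subset \pi(K)$. Either $\dot\eta_{s^*}(t^*)$ is parallel to $w$, in which case uniqueness of geodesics forces $\eta_{s^*}$ to coincide up to reparametrization with $\beta$ and hence to lie entirely in $\pi(K)$, contradicting $x\in V$; or $\eta_{s^*}$ crosses $\beta$ transversally at $p^*$. In the transverse case, applying the implicit function theorem to the signed distance from $\eta_s(t)$ to $\beta$ (whose $t$-derivative at $(s^*,t^*)$ is nonzero by transversality) produces, for $n$ large, a parameter $t_n$ close to $t^*$ with $\eta_{s_n}(t_n)\in \beta\subset\pi(K)$, contradicting $\eta_{s_n}\subset V$. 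Either way $\eta_{s^*}\cap\pi(K)=\emptyset$; since $\eta_{s^*}\subset U\setminus\pi(K)$ is connected and contains $x\in V$, it lies in $V$, so $s^*\in T$.

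Thus $T$ is a nonempty initial segment of $[0,1]$ that is both open and closed, so $T=[0,1]$, which gives the weak convexity of $V$. The main obstacle is handling the transverse crossing: the tangent case is essentially automatic once one uses invariance of $K$ to populate $\pi(K)$ with complete geodesics, but ruling out a transverse touch requires the structural stability of transverse intersections applied to the family $\eta_s$, which in turn relies on the smooth dependence of minimizing geodesics on their endpoints inside the injectivity radius.
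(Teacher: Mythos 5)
Your proof follows the same route as the cited source \cite{CKMS}: push the short path $\sigma$ out to the minimizing geodesic through the continuous family $\eta_s$, observe that flow--invariance makes $\pi(K)$ a union of complete geodesics, and kill a first contact by the tangent/transverse dichotomy together with persistence of transverse intersections. The closedness step of your continuity argument --- the substantive one --- is correct, and notably it does not use your standing hypothesis that $K$ is closed: a transverse crossing of $\eta_{s^*}$ with the single geodesic $\beta\subset\pi(K)$ already forces the nearby $\eta_{s_n}$ to meet $\beta$, hence $\pi(K)$.

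The one genuine issue is the openness step, which is exactly where ``$K$ closed'' is doing work, and that hypothesis is not in the statement. (A side remark: ``$\pi(K)$ closed, hence $V$ open'' is not quite the right deduction, since $U$ need not be open in $M$; what you actually use is that $M\setminus\pi(K)$ is open, so that the compact image of $\eta_s$ has a neighbourhood disjoint from $\pi(K)$, after which $\eta_{s'}\subset U$, connectedness of $\eta_{s'}$ and $x\in V$ place $\eta_{s'}$ in $V$.) Your claim that closedness holds in every application is not automatic in this paper: Lemma~\ref{lct} invokes Lemma~\ref{invconv} with $K$ containing $\La=\bigcap_{t\in\re}\phi_{-t}(SU)$, the maximal invariant subset of an open set, which need not be closed (its closure may pick up the boundary waist $\pm\dot w_1$, e.g.\ when a homoclinic-type orbit stays in $SU$). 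Without closedness the openness of $T$ is a real problem: if $s_n\searrow s^*$ with $\eta_{s_n}(t_n)\in\pi(K)$, the limit point $\eta_{s^*}(t^*)$ need not belong to $\pi(K)$, and the limit of the geodesics $\beta_n\subset\pi(K)$ through $\eta_{s_n}(t_n)$ need not be contained in $\pi(K)$ either, so no contradiction results. Either the hypothesis should be added and discharged in the applications by passing to the (still invariant) closure $\ov K$ --- checking that $\pi(\ov K)$ still lies in the ambient weakly convex set and that the relevant path component is not disturbed --- or the openness direction needs an argument that does not take limits of points of $\pi(K)$.
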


A closed geodesic $\ga:S^1\to M$ is called a {\it waist} when any absolutely continuous curve 
$\zeta$ which is sufficiently $C^0$-close to $\ga$ satisfies $L(\zeta)\ge L(\ga)$.
By the argument before \eqref{EEV}, 
non degenerate waists are positive hyperbolic and have no conjugate points.

\bigskip
\bigskip

\begin{Lemma}\quad

A simple nondegenerate closed geodesic $\ga$ is a waist if and only if it has no conjugate points.
\end{Lemma}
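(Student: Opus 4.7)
The plan is to analyse the Hessian of the length functional at $\ga$ in Fermi coordinates $(x,y)$ along $\ga$. In these coordinates the metric has the form $f(x,y)^2\,dx^2+dy^2$ with $f(x,0)\equiv 1$, $f_y(x,0)\equiv 0$, $f_{yy}(x,0)\equiv -K(x)$, where $K$ is the Gaussian curvature along $\ga$. For an absolutely continuous curve $\zeta$ close to $\ga$ and representable as a graph $x\mapsto(x,y(x))$, $y\in H^1(\re/L\Z)$, one has
$$
L(\zeta)\;=\;L(\ga)+\tfrac12\,Q(y)+o\bigl(\|y\|_{H^1}^2\bigr),\qquad Q(y):=\int_0^L\bigl(\dot y^2-K(x)\,y^2\bigr)\,dx.
$$
A general $C^0$-close absolutely continuous curve can be monotonised in the Fermi tube into a graph of no larger length, and curves of very large $H^1$-norm automatically satisfy $L(\zeta)\gg L(\ga)$; hence the waist property is essentially equivalent to strict positive-definiteness of $Q$ on the space of $L$-periodic functions.

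I would then invoke the classical Schr\"odinger--Sturm fact that \emph{$\ga$ has no conjugate points along $\re$ if and only if the Jacobi equation $\ddot J+KJ=0$ admits a nowhere vanishing solution on $\re$}. Combining this with the argument preceding \eqref{EEV}---which, under non-degeneracy together with absence of conjugate points, already shows $\ga$ is positive hyperbolic with Poincar\'e multipliers $\la^{\pm1}$, $\la>1$---one may take this nowhere vanishing Jacobi field to be the unstable Floquet eigenvector $J_+$, satisfying $J_+(x+L)=\la\,J_+(x)$. Its logarithmic derivative $u:=\dot J_+/J_+$ is then well defined, $L$-periodic (the factor $\la$ cancels), and satisfies the Riccati equation $\dot u+u^2+K=0$. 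A direct integration by parts gives
$$
Q(y)\;=\;\int_0^L\bigl(\dot y-u\,y\bigr)^2\,dx\;+\;\bigl[u\,y^2\bigr]_0^L,
$$
and for $L$-periodic $y$ the boundary term vanishes. Hence $Q(y)\ge 0$, with equality only when $\dot y=uy$, i.e.\ $y$ is a scalar multiple of $J_+$; but $J_+$ is not $L$-periodic because $\la\ne 1$. Thus $Q>0$ on every non-zero $L$-periodic variation, and $H^1$-compactness upgrades this to $Q(y)\ge c\,\|y\|_{H^1}^2$.

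In the backward direction this coercivity dominates the remainder in the length expansion and yields the waist property. For the forward direction I would argue the contrapositive: if $\ga$ has a conjugate pair on its lift, then no Jacobi field is nowhere vanishing, so the Jacobi operator $-\partial_x^2-K$ on $L$-periodic functions has a non-positive first eigenvalue; non-degeneracy rules out the eigenvalue $0$, so there is an $L$-periodic eigenvector $\eta$ with $Q(\eta)<0$. The length expansion then produces a variation $\zeta$ of $\ga$ with $L(\zeta)<L(\ga)$, contradicting the waist property.

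The principal obstacle is the backward direction, and specifically in exhibiting the periodicity of $u$ together with the non-periodicity of $J_+$ itself: it is precisely the condition $\la\ne 1$ that both makes $u=\dot J_+/J_+$ descend to $\re/L\Z$ (eliminating the boundary term in the factorisation) and simultaneously prevents non-zero periodic variations from saturating $Q\ge 0$. This is the step where the non-degeneracy hypothesis is essential and where the proof closes.
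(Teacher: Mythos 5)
Your argument for the implication ``nondegenerate and no conjugate points $\then$ waist'' has a genuine gap at the step where positivity of the index form $Q$ is converted into the waist property. A waist must minimize against \emph{all} absolutely continuous curves that are merely $C^0$-close, i.e.\ it is a \emph{strong} local minimum, whereas the expansion $L(\zeta)=L(\ga)+\tfrac12Q(y)+o(\|y\|_{H^1}^2)$ is only valid for $y$ small in $C^1$: the remainder in $\sqrt{1+u}=1+\tfrac u2-\tfrac{u^2}8+\cdots$ contributes a term of order $\int\dot y^4$, which is not $o(\|\dot y\|_{L^2}^2)$ unless $\dot y$ is uniformly small. Concretely, a sawtooth graph with $\|y\|_{C^0}$ and $\|\dot y\|_{L^1}$ arbitrarily small but $\|\dot y\|_{L^2}$ arbitrarily large has length $\int\sqrt{f^2+\dot y^2}\,dx\le\int f\,dx+\int|\dot y|\,dx$, hence arbitrarily close to $L(\ga)$, while $Q(y)$ is huge; this refutes both the claimed form of the remainder and the assertion that curves of very large $H^1$-norm automatically satisfy $L(\zeta)\gg L(\ga)$. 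This is exactly the classical distinction between weak and strong minima, and bridging it requires a Weierstrass field of extremals / Hilbert invariant integral, not the second variation alone.

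That invariant integral is precisely what the paper's proof constructs: using \eqref{EEV} it writes $W^s(\ga)\subset T^*M$ as the graph of a closed $1$-form $\om$ over a tubular neighborhood $U$, and the Fenchel inequality $\om(x)(v)\le L(x,v)+\tfrac12$, together with $\int_\ga\om=\int_\eta\om$ for any $\eta$ homotopic to $\ga$ in $U$, yields $L(\ga)\le L(\eta)$ for every $C^0$-close absolutely continuous competitor in one stroke. Your Riccati solution $u=\dot J_+/J_+$ is in fact the slope field of this very calibration in Fermi coordinates, so you have assembled the right ingredients; what is missing is the calibration step that turns them into an inequality valid beyond $C^1$-small graphs. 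Two smaller points: your reverse direction is essentially the ``standard'' one the paper does not spell out, but the inference from a conjugate pair on the line to a negative first eigenvalue of the $L$-periodic Jacobi operator is itself a piece of Hill's-equation theory (a conjugate pair may be separated by many periods, so one must either invoke the identification of the first periodic eigenvalue with the bottom of the spectrum on $\re$, or use the covering trick the paper applies before \eqref{EEV}); and the monotonisation of a non-graphical $C^0$-close curve into a graph of no larger length also deserves justification, though the calibration argument makes it unnecessary.
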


\begin{proof}\quad

Suppose that $\ga$ is nondegenerate and has no conjugate points,  
we prove that it is a waist.
The converse is standard.
Consider the geodesic lagrangian $L:TM\to\re$  and hamiltonian
$H:T^*M\to\re$
\begin{equation}\label{LH}
L(x,v) =\tfrac 12 |v|_x^2, \quad 
H(x,p)=\sup_{v\in T_xM} \{p(v)-L(x,v)\},
\quad
H(x,p)=\tfrac 12 \,|p|_x^2.
\end{equation}
The Legendre transform $\cL(x,v)=\langle v,\cdot\rangle_x$ conjugates 
the geodesic flow to the hamiltonian flow of $H$ on the energy level
$H\equiv \tfrac 12$.
Also $\cL(\pi^{-1}\{x\})=\pi_*^{-1}\{x\}$ identifies the 
\linebreak
vertical fibers.
Observe that  $\ga$ must be positive hyperbolic.
Since~\eqref{EEV} holds in the hamiltonian flow
 there is a  neighborhood $U$ of $\ga$ where 
$W^s(\ga)\subset T^*M$ is a graph:
$$
T^*U\cap W^s(\ga)=\{\,(x,\om(x))\in T^*_xM:x\in U\,\}.
$$
Then $\om\in\La^1(U)$ is a 1-form on $U$ which is closed because $W^s(\ga)$
is a lagrangian submanifold. 
And  $(dx\wedge dp)|_{W^s(\ga)}\equiv 0$ 
 because $W^s(\ga)$ is tangent to the
Reeb vector field of $(H^{-1}\{\tfrac 12\},p\, dx)$.  
Since $H(x,\om(x))\equiv\tfrac 12$, equation~\eqref{LH} implies that
$$
\forall (x,v)\in TM\qquad \om(x)(v) \le L(x,v)+\tfrac 12.
$$
For $x\in\ga$, we have that $\om(x)=\cL(x,\dga)=\langle \dga,\cdot\rangle_x$. Therefore
$\om(\ga)\cdot\dga\equiv 1$.

Let $\eta$ be an absolutely continuous  curve 
$C^0$ close to $\ga$ in $U$ parametrized by arc length.
Then $L(\eta,\deta)\equiv \tfrac 12$ and 
$$
L(\ga)=\int_\ga\om=\int_\eta\om \le \int_\eta L+\tfrac 12 =L(\eta),
$$
where the second inequality holds because $\eta$ is homotopic to $\ga$ inside $U$.

\end{proof}

\newpage

We need the following min-max lemma.
These geodesic have conjugate points because minimax 
 critical points can not be local minima.

\begin{Lemma}[\cite{CKMS} lemma~{2.4}]\quad\label{L43}

Let $(M,\rho)$ be an orientable riemannian surface.
\begin{enumerate}[(i)]
\item\label{l431} If $A\subset M$ is an annulus bordered by two waists, then 
$\interior(A)$ contains a non contractible simple closed geodesic
with conjugate points.

\item\label{l432} If $D\subset M$ is a compact disk bounded by a waist, then 
$\interior(D)$ contains a simple closed geodesic 
with conjugate points.
\end{enumerate}

\end{Lemma}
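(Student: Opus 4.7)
My plan is to prove both (i) and (ii) by a mountain pass minmax argument in which the curve shortening flow $\Psi_s$ plays the role of the deformation, together with Lemma~\ref{L41} to extract a limiting closed geodesic and the waist characterization from the preceding lemma to recognize conjugate points. I describe (i) in detail; (ii) is analogous with a different sweep.

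For (i), let $\cC$ be the connected component of $\Emb(S^1,A)$ consisting of simple closed curves in $A$ freely homotopic to the boundaries $\ga_0,\ga_1$, and let $\Lambda$ be the space of continuous paths $\Gamma:[0,1]\to\cC$ joining $\ga_0$ to $\ga_1$. Set
\[
c:=\inf_{\Gamma\in\Lambda}\;\max_{t\in[0,1]}L(\Gamma(t)).
\]
Since $\ga_0$ and $\ga_1$ are non-degenerate waists, each is a strict local minimum of $L$ on $\cC$: there exist $\de>0$ and $C^2$-neighborhoods $U_i$ of $\ga_i$ such that $L\ge L(\ga_i)+\de$ on $\partial U_i$. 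Every path in $\Lambda$ must enter some such neighborhood from outside, so $c>\max\{L(\ga_0),L(\ga_1)\}$. I would then use $\Psi_s$ as the mountain-pass deformation: by property~(iii) it strictly decreases $L$ off geodesics, by property~(\ref{p4}) combined with embeddedness-preservation it acts on $\cC$, and by property~(\ref{p5}) together with Lemma~\ref{invconv} applied to $U=M$ and $K=\dga_0\cup\dga_1$ (so that $\interior(A)$ is weakly convex), it preserves $\interior(A)$. Applied pointwise along a minimizing sequence $\Gamma_n\in\Lambda$, and combined with the $C^2$-compactness for length-bounded simple loops supplied by Lemma~\ref{L41}, the minmax is realized by a simple closed geodesic $\ga^*\subset A$ with $L(\ga^*)=c$. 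Because $c>L(\ga_i)$ we have $\ga^*\ne\ga_0,\ga_1$, so $\ga^*\subset\interior(A)$; being in $\cC$, it is non-contractible in $A$. Finally, $\ga^*$ cannot be a waist, for then it would be a strict local minimum of $L$ on $\cC$, and a path in $\Lambda$ realizing $c$ could be detoured around $\ga^*$ to strictly lower its maximum length, contradicting the definition of $c$. By the preceding lemma, $\ga^*$ has conjugate points.

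For (ii), let $\Lambda$ be the space of sweeps $\Gamma:[0,1]\to\Emb(S^1,\interior(D))\cup\{\ga\}\cup\{\text{constants}\}$ with $\Gamma(0)=\ga$, $\Gamma(1)$ a constant loop in $\interior(D)$, and $\Gamma(t)\subset\interior(D)$ for $0<t<1$. The non-degenerate waist property of $\ga$ again gives, by the same barrier argument, $c>L(\ga)$. The same deformation, now invoking Lemma~\ref{invconv} with $K=\dga$ so that $\interior(D)$ is weakly convex, produces a simple closed geodesic $\ga^*\subset\interior(D)$ with $L(\ga^*)=c>0$, which has conjugate points because it is not a waist. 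The main obstacle in both parts is the rigorous form of the mountain-pass deformation lemma: the curve shortening flow may develop singularities in finite time (property~(iv)), so the deformation has to be cut off before any curve collapses and one must verify that the level $c$ lies strictly between the accessible degenerate levels (the endpoint waists in (i), and the constant-loop level $0$ together with $L(\ga)$ in (ii)); this is exactly what the strict inequalities $c>\max\{L(\ga_0),L(\ga_1)\}$ and $c>L(\ga)>0$ guarantee.
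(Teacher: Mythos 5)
The paper does not prove this lemma; it imports it from \cite{CKMS} (Lemma~2.4), and your strategy --- a Birkhoff mountain-pass in which the curve shortening flow $\Psi_s$ is the deformation, Lemma~\ref{L41} supplies limiting geodesics, and the waist characterization detects conjugate points --- is exactly the strategy of the cited proof and of the paper's one-line gloss that ``minimax critical points can not be local minima''. As written, however, your argument has two genuine gaps. The first is the extraction of a critical point at the level $c$. Lemma~\ref{L41} is a pure minimization statement: it produces a sequence in a component $\cC$ converging to a geodesic of length $\inf_{\zeta\in\cC}L(\zeta)$; it is not a compactness or Palais--Smale statement for sublevel sets and says nothing about sweeps, so it cannot be ``combined with'' the flow to conclude that the minmax value is attained. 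What is needed is a deformation lemma: if no closed geodesic of the required type exists at level $c$, then applying $\Psi_s$ to an almost-optimal sweep pushes its maximal length strictly below $c$. Making this uniform along the sweep, ruling out collapse to points of the curves whose length is near $c$ (automatic in (i) by non-contractibility in the weakly convex $\interior(A)$, but requiring the quantitative gap $c>0$ in (ii)), and extracting a $C^2$-limit along a sequence of flow times is the technical heart of the cited proof (where the machinery of \cite{dpmms20} enters). You correctly flag this as ``the main obstacle'' but do not overcome it.

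The second gap is that the conjugate-point conclusion uses non-degeneracy in two places without justification. The strict barrier $L\ge L(\ga_i)+\de$ on $\partial U_i$, and the claim that a waist is a \emph{strict} local minimum of $L$ (so that a near-optimal sweep can be ``detoured around $\ga^*$''), both fail for degenerate waists; and the preceding lemma gives ``no conjugate points $\Rightarrow$ waist'' only for non-degenerate geodesics, so ``$\ga^*$ is not a waist'' does not yield conjugate points if $\ga^*$ is degenerate. Some such hypothesis is unavoidable: a flat sub-annulus bordered by two (degenerate) waists contains no geodesic with conjugate points at all. In this paper the issue is absorbed by the standing assumption of Theorem~\ref{TCBA} that all waists are non-degenerate (hence, by the discussion preceding the lemma, hyperbolic, which is what actually produces the quantitative barrier via the calibration argument); your proof should invoke that hypothesis explicitly and still needs to exclude the possibility that the minmax geodesic is degenerate, without conjugate points, and not a waist.
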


\bigskip

\begin{Lemma}[\cite{dpmms20} lemma~5.9]\label{ll45}\quad

Let $(M,\rho)$ be a riemannian surface, and $\ga:[-T,T]\to M$ a geodesic arc
parametrized with unit speed whose interior $\ga|_{]-T,T[}$ contains a pair of 
conjugate points. Then there exists an open neighborhood $U\subset SM$
of $(\ga(0),\dga(0))$
such that, for each $(x,v)\in SU$, the geodesic $\zeta(t)=\exp_x(tv)$ intersets
$\ga$ for some $t\in[-T,T]$. 
\end{Lemma}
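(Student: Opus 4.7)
The plan is to reduce the statement to Sturm's comparison theorem applied to perturbations of the Jacobi equation along $\ga$.

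Suppose $\ga(a)$ and $\ga(b)$ are a pair of conjugate points with $-T<a<b<T$; by shrinking the interval if necessary, I may assume that $a$ and $b$ are consecutive zeros of a nontrivial perpendicular Jacobi field $J$ along $\ga$. Since the surface is $2$-dimensional, the perpendicular Jacobi equation along $\ga$ is a scalar second order linear ODE $\eta''+K(s)\eta=0$, where $K$ is the Gaussian curvature along $\ga$. By Sturm's comparison theorem, every nontrivial solution $\eta$ of this equation has at least one zero in $[a,b]$, and all zeros of nontrivial solutions are simple (since $\eta(s_0)=\eta'(s_0)=0$ would force $\eta\equiv 0$).

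First I would introduce Fermi coordinates $(s,y)$ in a tubular neighborhood of $\ga|_{[a-\delta,b+\delta]}$ for some small $\delta>0$ with $[a-\delta,b+\delta]\subset{]-T,T[}$. In these coordinates the metric has the form $\alpha(s,y)^{2}\,ds^{2}+dy^{2}$ with $\alpha(s,0)=1$, $\alpha_{y}(s,0)=0$ and $\alpha_{yy}(s,0)=-K(s)$, the curve $\ga$ corresponds to $y=0$, and a geodesic sufficiently $C^{0}$-close to $\ga$ on this interval can be reparametrized as a graph $y=y(s)$. A direct computation shows that the equation for such a graph reads
\[
y''(s)+K(s)\,y(s)=R(s,y,y'),\qquad R(s,y,y')=O\bigl(y^{2}+|y\,y'|+|y\,{y'}^{2}|\bigr),
\]
so that its linearization at $y\equiv 0$ is precisely the perpendicular Jacobi equation.

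Next I would run a rescaling argument. For $(x,v)\in SM$ sufficiently close to $(\ga(0),\dga(0))$, the corresponding geodesic $\zeta$ lies in the tubular neighborhood over the whole $s$-range $[a-\delta,b+\delta]$ and is represented there by a small function $y(s)$ with $(y(a),y'(a))$ arbitrarily small. Set $\epsilon:=\|(y(a),y'(a))\|$ and $\tilde y(s):=y(s)/\epsilon$. Then $\tilde y$ satisfies
\[
\tilde y''+K(s)\,\tilde y=\tfrac{1}{\epsilon}R(s,\epsilon\tilde y,\epsilon\tilde y')=O(\epsilon)
\]
on the set where $(\tilde y,\tilde y')$ stays bounded, with initial data of unit norm at $s=a$. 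As $\epsilon\to 0$ the function $\tilde y$ converges in $C^{1}([a-\delta,b+\delta])$ to the solution $\tilde J$ of the Jacobi equation with the same unit-norm initial data. By Sturm, $\tilde J$ has a simple zero $s^{*}\in[a,b]$; simplicity of $s^*$ plus the implicit function theorem then produce a zero of $\tilde y$ near $s^{*}$ for $\epsilon$ small, hence a zero of $y$ in ${]a-\delta,b+\delta[}\subset{]-T,T[}$.

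The main obstacle will be the uniformity of the smallness threshold $\epsilon_{0}$ over all unit-norm initial data simultaneously, since the location of the zero $s^*$ varies with the initial condition; this is handled by the compactness of the unit circle in the space of initial data together with the simplicity of all zeros of nontrivial solutions of the $2$-dimensional Jacobi equation. Once $y$ has a zero at some $s\in[a,b]$, the point $\zeta(t)=\ga(s)\in M$ is reached at a geodesic-flow time $t$ which, since $\zeta$ is $C^{1}$-close to $\ga$ on the interval of interest, lies in $[-T,T]$. This yields the required intersection of $\zeta$ with $\ga|_{[-T,T]}$ and lets one take $U$ to be any sufficiently small open neighborhood of $(\ga(0),\dga(0))$ in $SM$ over which the above estimates are uniform.
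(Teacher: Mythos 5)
The paper does not actually prove this lemma: it is imported verbatim from \cite{dpmms20}, Lemma~5.9, so there is no in-paper argument to compare yours against. Your proposal is, however, essentially the standard proof of this fact (and the same circle of ideas as in the cited source): Fermi coordinates, the observation that the graph equation for nearby geodesics linearizes to the scalar Jacobi equation $y''+Ky=0$, a rescaling by the size of the initial data, and the Sturm separation theorem guaranteeing that every nontrivial solution has a (simple) zero on the interval between two consecutive conjugate parameters. The reduction to consecutive zeros of a single perpendicular Jacobi field, the identification of $R$ as quadratic in $(y,y')$ with at least one factor of $y$, the compactness of the unit circle of initial data to make the threshold $\e_0$ uniform, and the conversion from the Fermi parameter $s$ of the intersection back to the arc-length parameter $t\in[-T,T]$ of $\zeta$ are all correctly placed. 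I consider the argument correct.

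Two routine points deserve a sentence in a careful write-up. First, the arc $\ga|_{[a-\delta,b+\delta]}$ need not be embedded, so a tubular neighborhood in $M$ may not exist; one should instead work in the normal exponential strip $(s,y)\mapsto \exp_{\ga(s)}(y\,N(s))$, which is an immersion for small $|y|$, lift $\zeta$ to this strip using its $C^1$-proximity to $\ga$, and note that a zero of the lifted $y(s)$ still produces a genuine intersection point in $M$. Second, the estimate $\tfrac1\e R(s,\e\tilde y,\e\tilde y')=O(\e)$ presupposes a bound on $(\tilde y,\tilde y')$, which is exactly what one is trying to prove; this circularity is resolved by the usual bootstrap: on the maximal subinterval where $|(\tilde y,\tilde y')|\le 2C$, with $C$ a Gronwall bound for the unperturbed linear equation with unit data, the perturbed Gronwall estimate gives a strictly better bound for $\e$ small, so that subinterval is all of $[a-\delta,b+\delta]$. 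Neither point affects the validity of the approach.
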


Lemma~\ref{ll45} implies the following corollary:

\begin{Corollary}\label{cco}
\quad

Let $(M,\rho)$ be an orientable riemannian surface and $\ga$ 
a simple closed geodesic with conjugate points.
\begin{enumerate}[(i)]
\item\label{cco1} There exists $T>0$ and an open neighborhood $V\subset SM$ of 
the lift $\dga$ such that, for each $z\in V$, the geodesic $\zeta(t):=\pi\circ\phi_t(z)$
intersects $\ga$ on some positive time $t_1\in ]0,T]$ and some negative time
$t_2\in[-T,0[$.
\item\label{cco2}
There exists $T>0$ and an open neighborhood $U\subset M$ of $\ga$
such that, for each $z\in SU$, the geodesic $\zeta(t):=\pi\circ\phi_t(z)$
intersects $\ga$ on some  $t\in[-T,T]$.
\end{enumerate}

\end{Corollary}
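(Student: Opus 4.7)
The plan is to apply Lemma~\ref{ll45} to suitably shifted arcs of $\ga$ in order to force intersections in prescribed one-sided time intervals, and then to combine the resulting local statements by compactness. Let $c\in\,]0,L(\ga)[$ be such that $\ga(0)$ and $\ga(c)$ are conjugate along $\ga$. By translation invariance of the Jacobi equation along $\ga$, the pair $\ga(s),\ga(s+c)$ is conjugate along $\ga$ for every $s\in\re$.

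For item~\eqref{cco1}, fix $s_0\in\re/L(\ga)\Z$ and a small $\e>0$, and set $m:=s_0+(c+3\e)/2$, $R:=(c+\e)/2$. Apply Lemma~\ref{ll45} to the geodesic arc $t\mapsto\ga(m+t)$, $t\in[-R,R]$, i.e.\ to the segment $\ga|_{[s_0+\e,\,s_0+c+2\e]}$; its open interior contains the conjugate pair $\ga(s_0+3\e/2),\ga(s_0+3\e/2+c)$. The lemma supplies an open neighborhood $U^+\subset SM$ of $(\ga(m),\dga(m))$ such that every geodesic starting in $U^+$ meets $\ga$ at some parameter in $[-R,R]$. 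The pull-back $V^+_{s_0}:=\phi_{s_0-m}(U^+)$ is then an open neighborhood of $(\ga(s_0),\dga(s_0))$; for $z\in V^+_{s_0}$, setting $w:=\phi_{m-s_0}(z)\in U^+$ gives $\pi\phi_{t'}(z)=\pi\phi_{t'-(m-s_0)}(w)$, so a crossing time $t\in[-R,R]$ for the geodesic through $w$ corresponds to a crossing time $t'=t+m-s_0\in[\e,\,c+2\e]$ for the geodesic through $z$. The symmetric construction with the arc $\ga|_{[s_0-c-2\e,\,s_0-\e]}$ produces $V^-_{s_0}$ with a negative-time crossing in $[-c-2\e,-\e]$. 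Setting $V_{s_0}:=V^+_{s_0}\cap V^-_{s_0}$ and covering the compact orbit $\dga\subset SM$ by finitely many such neighborhoods yields, with $T:=c+2\e$, the open set $V$ required in \eqref{cco1}.

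Item~\eqref{cco2} reduces to a compactness argument on the circle bundle $S\ga:=\pi^{-1}(\ga)\cap SM$. At each $z_0=(\ga(s),v)\in S\ga$ I produce an open neighborhood $W_{z_0}\subset SM$ and $T_{z_0}>0$ such that every $z\in W_{z_0}$ gives a geodesic crossing $\ga$ within $|t|\le T_{z_0}$. If $v=\pm\dga(s)$, this is supplied by \eqref{cco1} applied to $\ga$ or to its time-reversal. If $v\ne\pm\dga(s)$, the geodesic through $z_0$ crosses $\ga$ transversely at $t=0$, so the implicit function theorem furnishes a $W_{z_0}$ on which the crossing time is a $C^1$ function of $z$ vanishing at $z_0$. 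Cover $S\ga$ by finitely many $W_{z_i}$ and set $W:=\bigcup_i W_{z_i}$, $T:=\max_i T_{z_i}$; then $SM\setminus W$ is compact and disjoint from $S\ga$, so $\pi(SM\setminus W)$ is a compact subset of $M$ disjoint from $\ga$. Therefore $U:=M\setminus \pi(SM\setminus W)$ is an open neighborhood of $\ga$ with $SU\subset W$, proving \eqref{cco2}.

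The main technical point is the recentering in \eqref{cco1}: Lemma~\ref{ll45} only provides a neighborhood of the midpoint $(\ga(m),\dga(m))$ of the chosen arc, not of the desired base point $(\ga(s_0),\dga(s_0))$. Transport by the geodesic flow $\phi_{s_0-m}$ repairs this at the cost of a translation of the time parameter, and the asymmetric placement of the arc on one side of $s_0$ is precisely what converts the two-sided bound $t\in[-R,R]$ supplied by the lemma into the one-sided bound $t'\in[\e,c+2\e]$ demanded by \eqref{cco1}.
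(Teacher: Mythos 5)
The paper offers no written proof of Corollary~\ref{cco} (it is stated as an immediate consequence of Lemma~\ref{ll45}), and your derivation --- applying the lemma to arcs of $\ga$ placed on one side of each base point, transporting the resulting neighborhoods by the flow, and then covering the compact sets $\dga$ and $S\ga$ by finitely many of them --- is exactly the natural way to make that implication explicit. The recentering step for \eqref{cco1} and the reduction of \eqref{cco2} to \eqref{cco1} plus transversality at non-tangent vectors are both correct.

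There is, however, one false assertion: the claim that \emph{``by translation invariance of the Jacobi equation along $\ga$, the pair $\ga(s),\ga(s+c)$ is conjugate along $\ga$ for every $s\in\re$.''} The Jacobi equation $\ddot J+K(\ga(t))J=0$ is not autonomous --- the curvature along $\ga$ is in general non-constant --- so conjugacy of $\ga(0)$ and $\ga(c)$ does not transfer to arbitrary translates; the conjugate locus genuinely varies along the geodesic. What \emph{is} true, because $K(\ga(t))$ is $L(\ga)$-periodic, is that $\ga(kL(\ga))$ and $\ga(kL(\ga)+c)$ are conjugate for every $k\in\Z$. This suffices for your argument: given $s_0$, choose $k$ with $kL(\ga)>s_0+2\e$ and apply Lemma~\ref{ll45} to the arc $\ga|_{[s_0+\e,\,s_0+L(\ga)+c+3\e]}$, whose interior contains the conjugate pair $\ga(kL(\ga)),\ga(kL(\ga)+c)$. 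The only effect is that the uniform bound becomes $T=L(\ga)+c+3\e$ rather than $c+2\e$, which is irrelevant to the statement. With that one-line repair the proof is complete.
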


A geodesic polygon in a riemannian surface
 is a simple  closed curve which is a union of finitely many
distinct geodesic arcs that is not one closed geodesic. 
Observe that necessarily the geodesic arcs are transversal.
Therefore we have 

\bigskip

\begin{Remark}\quad\label{rpo}
\begin{enumerate}[(i)]
\it
\item\label{rpo1} If $P$ is a geodesic polygon then there exist a neighborhood $V\subset SM$ 
of the lift $\dot P$ 
and $\ell>0$ such that for every $z\in V$ and $\zeta(t):=\pi\phi_t(z)$, 
both geodesic arcs $\zeta|_{]0,\ell]}$, $\zeta|_{[-\ell,0[}$ 
 intersect $P$.
\item\label{rpo2} If $P$ is a geodesic polygon there exists a neighborhood $U\subset M$ of $P$ 
and $\ell>0$ such that for every $z\in SU$ and $\zeta(t):=\pi\phi_t(z)$, 
the geodesic arc $\zeta|_{[-\ell,\ell]}$ 
 intersects $P$.
\end{enumerate}
\end{Remark}

\bigskip

\subsection{Complementary Birkhoff annuli.}\quad

In this section we obtain a complete system of surfaces of sections 
for $(M,\rho)$ provided that all waists are nondegenerate (i.e. hyperbolic).
This is done by adding disjoint Birkhoff annuli to the surfaces obtained 
in section~\ref{genus1}. The Birkhoff annuli have genus 0, so for them
we don't need to check the condition in theorem~\ref{TB}.\eqref{B3}
on the number of intersections of the separatrices.

In fact some are Birkhoff annuli of waists which are in 
$\Kfix$ and other are Birkhoff annuli of minimax orbits
which have index 1 and are in $\Krot$. 
If these minimax orbits are hyperbolic,
then their Floquet multipliers are negative and their invariant 
subspaces $E^s$, $E^u$ intersect the vertical bundle $V=\ker d\pi$,
$\pi:TM\to M$,  twice along one period. So each local invariant manifold
$W^s(\ga)$, $W^u(\ga)$ intersects each Birkhoff  annuli $A(\dga)$, $A(-\dga)$
only once.

We prove the following.

\begin{Theorem}\label{TCBA}
\quad

Let $(M, \rho)$ be an orientable riemannian surface of genus $g$ 
with all its waists non degenerate.
There are a finite number of surfaces of section $\Si_1, \ldots, \Si_{2n}$ such that
\begin{enumerate}[(a)]
\item\label{tcba1}  
If $g=0$ then $\Si_1$, $\Si_2$ are the Birkhoff annuli of a minimax
         simple closed geodesic.
\item\label{tcba2}  
 If $g>0$, $\Si_1$, $\Si_2$ are the surfaces of genus 1 and $4G+4$ 
boundary components
described in subsection~\ref{genus1}.
\item\label{tcba3}  
$\Si_3,\ldots, \Si_{2n}$ are Birkhoff annuli of $n-1$ mutually disjoint simple closed geodesics.
\item\label{tcba4}  $\Si_3,\ldots, \Si_{2n}$ are disjoint from $\Si_1$, $\Si_2$.
\item\label{tcba5}  Every geodesic orbit intersects $\Si_1\cup\cdots\cup \Si_{2n}$.
\item\label{tcba51} Let $\Kfix$ be the union of the set of closed orbits without conjugate 
points in $\cup_{i=3}^{2n} \partial\Si_i$ and let $\Krot=\cup_{i=1}^{2n}\partial \Si_i\setminus \Kfix$.
There are $0<\ell<\infty$ and a neighborhood $\cU$ of $\Krot$ in $SM$ such that 
$$
\forall z\in \cU\quad
\phi_{]0,\ell[}(z)\cap \bSi\ne\emptyset
\quad \&\quad
\phi_{]-\ell,0[}(z)\cap \bSi\ne\emptyset,
\qquad \bSi:=\cup_{i=1}^{2n}\Si.
$$
\item\label{tcba6} If $\ga$ is a   geodesic with 
$\dga(]0,+\infty[) \cap \bSi
=\emptyset$
then $\dga(t)\in W^s(z_t)$ for some $z_t\in \Kfix$.
\item\label{tcba7} If $\ga$ is a  geodesic with 
$\dga(]\!-\!\infty,0[) \cap \bSi=\emptyset$
then $\dga(t)\in W^u(z_t)$ for some $z_t\in \Kfix$.
\end{enumerate}

\end{Theorem}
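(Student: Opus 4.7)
The plan is to begin with the surfaces $\Si_1,\Si_2$ of parts (a)--(b) and to enlarge the system by an inductive exhaustion, adjoining at each step a pair of Birkhoff annuli $A(\dot\eta)$, $A(-\dot\eta)$ for a new simple closed geodesic $\eta$ lying in the interior of some connected component $C$ of $M\setminus\pi(\bSi)$. For the genus $0$ initialization I take $\eta$ to be a simple closed min-max geodesic on $\SS^2$; it has conjugate points, so $\pm\dot\eta\in\Krot$, and Corollary~\ref{cco}\eqref{cco1} supplies the bounded-return condition of property (f) at this boundary. For $g\ge 1$ the two genus-$1$ surfaces of subsection~\ref{genus1} are already available with boundaries $\pm\dot\ga_i$; although each $\ga_i$ is a waist, the union $\bigcup_i\ga_i$ is a geodesic polygon, so Remark~\ref{rpo}\eqref{rpo1} furnishes the bounded-return property and places $\pm\dot\ga_i$ in $\Krot$. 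The transversality condition~\eqref{css3} of Definition~\ref{dcsss} for all these initial boundaries is already checked in subsections~\ref{transv} and~\ref{TBS}.

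In the inductive step, given the current finite collection $\mathcal{C}_k$ of mutually disjoint simple closed geodesics determining the current $\bSi_k$, I examine each connected component $C$ of $M\setminus\bigcup\mathcal{C}_k$. By Lemma~\ref{invconv} applied with $K$ equal to $\bigcup_{\ga\in\mathcal{C}_k}\phi_\re(\dga)$, each such $C$ is weakly convex. Suppose there exists an orbit $\dga$ with $\dga([T,\infty))\cap\bSi_k=\emptyset$ and $\dga\notin W^s(\Kfix)$; its projection lies in $\overline C$ for $t\ge T$, so the compact connected $\omega$-limit set $\omega(\dga)\subset S\overline C$ cannot reduce to a single boundary waist orbit (else $\dga\in W^s(\Kfix)$) and cannot contain a rotating boundary orbit (else (f) would force $\dga$ to re-enter $\bSi_k$). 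Hence $\omega(\dga)$ meets $\interior SC$. I then apply Lemma~\ref{L41} to produce a waist in a non-trivial free homotopy class of $C$ when $C$ is not simply connected, or Lemma~\ref{L43} to produce a min-max simple closed geodesic with conjugate points in the interior of a disk or annulus bounded by waists; in either case I obtain $\eta\subset\interior C$ to adjoin, and the resulting Birkhoff annuli $A(\pm\dot\eta)$ are automatically disjoint from $\bSi_k$. Condition (f) for the new rotating boundary (when $\eta$ has conjugate points) follows from Corollary~\ref{cco}\eqref{cco1}, while a new waist $\eta$ enters $\Kfix$ without (f) being required.

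The principal obstacle is termination. For this I would argue that each adjoined $\eta$ is essential in its host component $C$, non-boundary-parallel, and either non-contractible or a min-max class isolated from $\partial C$, so that cutting $C$ along $\eta$ strictly decreases a suitable topological complexity, such as the number of pairs of pants in a pants decomposition of $M\setminus\bigcup\mathcal{C}_k$ augmented by a penalty for annular or disk components whose boundary geodesics still admit a nontrivial invariant set in the interior. Since $M$ has finite genus this complexity is bounded below, and the exhaustion terminates after finitely many steps. Once stopped, properties (g)--(h) hold directly from the exhaustion criterion; property (e) follows because an orbit never meeting $\bSi$ would be both forward and backward trapped in $N\setminus\bSi$ and hence contained in $W^s(\Kfix)\cap W^u(\Kfix)$, but $\Kfix\subset\partial\bSi$, so such an orbit must itself be a waist and thereby meet $\bSi$; and (f) along every rotating boundary is guaranteed by Remark~\ref{rpo} and Corollary~\ref{cco}.
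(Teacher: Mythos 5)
Your overall strategy --- start from $\Si_1,\Si_2$, exhaust the complementary disks by repeatedly inserting waists (via Lemma~\ref{L41}) and min-max geodesics with conjugate points (via Lemma~\ref{L43}) in weakly convex components (Lemma~\ref{invconv}), and control the rotating boundaries with Corollary~\ref{cco} and Remark~\ref{rpo} --- is essentially the paper's corset-and-bowl decomposition (Lemma~\ref{lct} and Claim~\ref{c410}). The substantive problem is your termination argument, and it is a genuine gap. All of the geodesics you adjoin after the initial stage live inside the simply connected regions $R_i$, hence are contractible in $M$ and nested: each step replaces a disk by an annulus plus a disk (or splits an annulus into two annuli). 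No pants-decomposition count or genus-based complexity of $M$ changes under such a cut, since contractible and boundary-parallel curves contribute nothing to a pants decomposition; and your proposed ``penalty for annular or disk components whose boundary geodesics still admit a nontrivial invariant set in the interior'' does not decrease either --- the single offending disk is replaced by two components, each of which may again carry a nontrivial invariant set in its interior. Topologically the process can be iterated forever, so finiteness of the genus of $M$ gives you nothing here.

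What actually stops the process in the paper is Proposition~\ref{pfin} (CKMS Lemmas~3.7--3.8): under the standing hypothesis that all waists are non-degenerate, any collection of mutually disjoint simple closed geodesics contained in a bowl is finite. Since each step of the exhaustion produces a new waist disjoint from all previously constructed ones, this analytic finiteness statement --- not a topological one --- forces termination; note that it is exactly where the non-degeneracy hypothesis of the theorem enters, and your proposal never uses that hypothesis for termination. A secondary, patchable issue: your derivation of item~(e) concludes that an orbit missing $\bSi$ ``must itself be a waist'' because it lies in $W^s(\Kfix)\cap W^u(\Kfix)$; a homoclinic or heteroclinic orbit of the waists also lies in that intersection, and one must instead argue (as the paper does via~\eqref{ibdy}, i.e.\ the emptiness of $\bigcap_t\phi_t(SU_i)$ at the final stage, together with the fact that the separating curves with conjugate points also contribute Birkhoff annuli to $\bSi$) that such an orbit is forced to cross some component of $K_i$ and hence meets $\bSi$.
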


 The following proposition is proved in lemmas~3.8 and 3.7 in \cite{CKMS},
 using examples~3.2 and~3.3 in~\cite{CKMS}.
 
 \begin{Proposition}[\cite{CKMS} lemmas~3.8, 3.7]\label{pfin}
 \quad
 
 Let $(M,\rho)$ be a riemannian surface and let 
  $D\subset M$ be a simply connected open set whose boundary
 $\partial D=P$ is a geodesic polygon or a simple closed geodesic
 with conjugate points.
 Suppose that every simple closed geodesic 
 without conjugate points contained in $D$ is non-degenerate.
 Then every collection of mutually disjoint simple closed geodesics 
 contained in $D$ is finite.
 \end{Proposition}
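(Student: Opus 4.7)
The plan is to argue by contradiction. Suppose $\{\gamma_n\}_{n\in\na}$ is an infinite family of pairwise disjoint simple closed geodesics in $D$. Since $D$ is simply connected, each $\gamma_n$ bounds an embedded disk $D_n\subset D$, and Gauss--Bonnet gives $\int_{D_n}K\,dA=2\pi$. Any pairwise disjoint subcollection of size $k$ would then contribute total curvature $2\pi k$, bounded by $\int_D|K|\,dA<\infty$, so only finitely many disks are maximal for inclusion; a K\"onig's lemma argument in the resulting containment forest produces an infinite nested subsequence which I relabel $D_1\supset D_2\supset\cdots$. Next, simple closed geodesics without conjugate points are waists, hence by hypothesis non-degenerate and positive hyperbolic, and therefore isolated in the periodic orbit set of the geodesic flow; at most finitely many $\gamma_n$ can be waists, so after a further extraction every $\gamma_n$ admits conjugate points.

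Since $\partial D$ is a geodesic polygon or a simple closed geodesic with conjugate points, Remark~\ref{rpo}\eqref{rpo2} respectively Corollary~\ref{cco}\eqref{cco2} supplies an open neighborhood $U$ of $\partial D$ in $M$ through which every geodesic leaves $D$, so $\bigcup_n\gamma_n\subset K:=\ov{D\setminus U}$ is compact in $D$ and $\bigcup_n\dga_n\subset SK$ is compact in $SM$. Choosing base points $p_n\in\gamma_n$ and passing to a subsequence, $(p_n,\dga_n(p_n))\to(p_\infty,v_\infty)\in SK$, so the geodesic $\gamma_\infty(t):=\pi\phi_t(p_\infty,v_\infty)$ is the $C^\infty$-limit of $\gamma_n$ on every compact time interval. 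Because $\gamma_m\subset\ov{D_n}$ for all $m\ge n$, the limit satisfies $\gamma_\infty\subset\ov{D_n}$ for every $n$; moreover $\gamma_\infty\ne\gamma_{n_0}$ for any $n_0$, since otherwise Corollary~\ref{cco}\eqref{cco1} applied at $\gamma_{n_0}$ (which has conjugate points) would force $\gamma_n\cap\gamma_{n_0}\ne\emptyset$ for large $n$, contradicting disjointness.

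The argument closes by a dichotomy on $\gamma_\infty$. If $\gamma_\infty$ has no conjugate points then it is a waist, hence by hypothesis non-degenerate, positive hyperbolic, and isolated in the periodic orbit set, contradicting $\dga_n\to\dga_\infty$ with $\gamma_n\ne\gamma_\infty$. If $\gamma_\infty$ has conjugate points, Corollary~\ref{cco}\eqref{cco1} supplies a neighborhood $V\subset SM$ of $\dga_\infty$ and a time $T>0$ such that every orbit entering $V$ meets $\gamma_\infty$ within time $T$; for $n$ large $\dga_n\subset V$, forcing $\gamma_n\cap\gamma_\infty\ne\emptyset$. Any intersection with $\gamma_n=\partial D_n$ is either tangential, in which case ODE uniqueness of geodesics forces $\gamma_\infty=\gamma_n$ (contradicting $\gamma_\infty\ne\gamma_n$), or transverse, in which case $\gamma_\infty$ would cross to the complement of $\ov{D_n}$ (contradicting $\gamma_\infty\subset\ov{D_n}$). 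This yields the desired contradiction and proves finiteness.

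The step I expect to be the main obstacle is guaranteeing that $\gamma_\infty$ is itself a \emph{closed} geodesic, i.e.\ that the periods $L(\gamma_n)$ stay uniformly bounded. Only $\text{area}(D_n)$ is manifestly controlled by the Gauss--Bonnet identities $\int_{D_n}K\,dA=2\pi$ and $\int_{D_n\setminus D_{n+1}}K\,dA=0$, and these do not directly bound perimeters. I would first try to obtain a uniform length bound by combining the summability $\sum_n\text{area}(D_n\setminus D_{n+1})\le\text{area}(D)$ with a coarea/tubular neighborhood estimate inside $K$ (where the injectivity radius is bounded below) in order to force both the widths and the perimeters of consecutive annular regions to decay. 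If the lengths blow up nonetheless, the $\gamma_n$ converge to a geodesic lamination in $K$, and I would argue directly that some recurrent leaf of this lamination contains a long segment with conjugate points to which Lemma~\ref{ll45} applies, recovering the same intersection contradiction with $\gamma_\infty\subset\ov{D_n}$.
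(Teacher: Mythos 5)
The paper itself does not prove Proposition~\ref{pfin}: it is imported from Lemmas~3.7 and~3.8 of \cite{CKMS} (together with Examples~3.2 and~3.3 there), so I am judging your argument on its own terms. The combinatorial reduction to a nested chain $D_1\supset D_2\supset\cdots$ via Gauss--Bonnet and K\"onig's lemma is correct, and so is your handling of the case where the limiting geodesic has conjugate points; indeed Lemma~\ref{ll45} applies to arbitrary geodesic arcs, so that half of your dichotomy does not even require $\ga_\infty$ to be closed.

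The genuine gap is the no-conjugate-points case, and it is not a peripheral technicality but the whole content of the proposition. Two of your steps fail for the same reason. First, ``at most finitely many $\ga_n$ can be waists because non-degenerate waists are isolated'' is unjustified: isolatedness of each individual hyperbolic closed orbit does not bound their number when the lengths $L(\ga_n)$ are unbounded, and excluding infinitely many pairwise disjoint non-degenerate waists is precisely the hard case. (The cited Examples~3.2--3.3 of \cite{CKMS} show the conclusion is \emph{false} when degenerate waists are allowed, e.g.\ a region foliated by closed geodesics; so any correct proof must invoke non-degeneracy exactly at this point, whereas your argument never uses it in a load-bearing way.) Second, if $L(\ga_n)\to\infty$ then $\ga_\infty$ is a complete, generally non-closed geodesic without conjugate points lying in $\bigcap_n\ov{D_n}$, about which the hypothesis says nothing, so ``$\ga_\infty$ is a waist, hence non-degenerate, hence isolated'' is unavailable. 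Your proposed repairs do not close this: smallness of $\text{area}(D_n\setminus D_{n+1})$ together with $\int_{D_n\setminus D_{n+1}}K\,dA=0$ does not bound the perimeters (a thin annulus between two disjoint geodesics can be arbitrarily long), and ``some recurrent leaf of the limit lamination has conjugate points'' is exactly what fails in the problematic configuration, namely a minimal invariant set all of whose geodesics are free of conjugate points. Showing that such a set, when accumulated by the $\ga_n$, must contain a closed orbit --- to which non-degeneracy and hyperbolicity can then be applied --- is the substance of \cite{CKMS}, Lemma~3.7, and is missing from your proof.
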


A {\it corset} $(A,w)$ in $(M,\rho)$ is an annulus $A\subset M$ such that 
$\interior(A)$ contains a simple closed geodesic $w$ which is a waist and 
that the boundary components of $\partial A$ 
are either a polygon  or
a simple closed geodesic with conjugate points.
A {\it bowl}  is a disk $D\subset M$ whose boundary $\partial D$
is either a geodesic polygon 
or a simple closed geodesic with conjugate points.
We further require that corsets and bowls are connected components 
of the complement of finitely many geodesics.
Observe that by lemma~\ref{invconv}, corsets and  bowls are weakly convex.

\begin{Lemma}\label{lct}\quad

\begin{enumerate}
\item\label{lct1} If $(A,w_1)$ is a corset, $U=\interior A\setminus w_1$ and 
$\cap_{t\in\re}\phi_{-t}(SU)\ne\emptyset$; then there are two corsets $(A_1,w_1)$, $(A_2, w_2)$
with $A=A_1\cup A_2$ and $A_1\cap A_2 =\partial A_i\setminus \partial A$, $i=1,2$.

\item\label{lct2} If $D$ is a bowl, $V=D\setminus\partial D$ and $\cap_{t\in\re}\phi_{-t}(SV)\ne \emptyset$;
then there is a corset $(A,w)$ and a bowl $B$ such that $D= A\cup B$
and $A\cap B =\partial B =\partial A\setminus\partial D$.
\end{enumerate}

\end{Lemma}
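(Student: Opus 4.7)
My overall plan for both parts is to use the trapped orbit to produce a simple closed geodesic in the interior of the region via the curve-shortening flow, and then to combine lemmas~\ref{L41} and~\ref{L43} to extract a new waist together with a separating closed geodesic with conjugate points. The splitting of the corset or bowl then follows by cutting along the separator.

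For part~\eqref{lct1}, pick $z\in\bigcap_{t\in\re}\phi_{-t}(SU)$ and set $\gamma(t):=\pi\phi_t(z)$. Since two distinct geodesics on a surface meet only transversally, $\gamma$ cannot cross the closed geodesic $w_1$ without leaving $U$, so $\gamma$ lies in one of the two annular components of $U$; denote it $U_1$ with $\partial U_1=w_1\cup C$, where $C\subset\partial A$. Lemma~\ref{invconv}, applied to the weakly convex corset $A$ and the closed invariant set $\dot w_1\cup(-\dot w_1)\subset SM$, shows that $U_1$ is weakly convex. The $\omega$-limit of $z$ is a compact invariant subset of $S\ov{U_1}$, and by corollary~\ref{cco} (if $C$ is a closed geodesic with conjugate points) or remark~\ref{rpo} (if $C$ is a geodesic polygon) the orbit cannot accumulate on $SC$; hence $\pi\,\omega(z)\subset U_1\cup w_1$. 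I then apply the curve-shortening flow, using the weak-convexity preservation~\eqref{p5}, to a simple loop in $U_1$ enclosing a large portion of $\pi\dot\gamma$, in order to produce a simple closed geodesic $\delta\subset\interior(U_1)$. If $\delta$ has no conjugate points it is a waist and I set $w_2:=\delta$; otherwise the annulus $A^\circ\subset U_1$ between $\delta$ and $C$ has both boundary components carrying conjugate points or being polygonal, and lemma~\ref{L41} applied in its non-contractible class yields a length-minimizer lying in $\interior(A^\circ)$ (neither $\delta$ nor $C$ can be a local minimum of length), which is necessarily a waist $w_2$. In either case $w_2\subset U_1$, so $w_2\neq w_1$. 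Lemma~\ref{L43}.\eqref{l431} applied to the annulus $A^\ast\subset A$ bounded by the two waists $w_1,w_2$ then produces a non-contractible simple closed geodesic $\alpha\subset\interior(A^\ast)$ with conjugate points, and the two sub-annuli of $A$ on either side of $\alpha$ are the desired corsets $(A_1,w_1)$, $(A_2,w_2)$ with $A_1\cap A_2=\alpha$.

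Part~\eqref{lct2} follows the same template adapted to the disk setting. A simple loop in $V:=\interior(D)$ enclosing $\pi\dot\gamma$ cannot collapse to a point without $\gamma$ having to cross it and escape $V$; so curve shortening, stabilized by property~\eqref{p5}, detects a simple closed geodesic $\delta\subset V$ bounding a disk $D_0\subset V$, separated from $\partial D$ by an annulus $A'=\ov{D\setminus D_0}$. Either $\delta$ has no conjugate points and is itself a waist (set $w:=\delta$), or $\delta$ has conjugate points and I apply lemma~\ref{L41} to $A'$ (weakly convex by lemma~\ref{invconv}) in its non-contractible loop class to obtain a waist $w$ in $\interior(A')$, ruling out minimizers on $\delta$ or on $\partial D$ since both carry conjugate points or are polygonal. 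The resulting waist $w\subset V$ bounds a disk $B_0\subset D$, and lemma~\ref{L43}.\eqref{l432} applied to $B_0$ produces a simple closed geodesic $\beta\subset\interior(B_0)$ with conjugate points, which I use as the separator: the decomposition is $A:=\ov{D\setminus B}$ (the corset containing $w$, with boundary $\partial D$ and $\beta$) and $B$ (the residual bowl bounded by $\beta$), with $A\cap B=\beta$ as required.

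The main obstacle in both parts is the first step, namely extracting an honest simple closed geodesic in the interior of the region from the purely dynamical hypothesis $\bigcap_t\phi_{-t}(SU)\ne\emptyset$ or $\bigcap_t\phi_{-t}(SV)\ne\emptyset$. This requires combining the weak-convexity preservation of the curve-shortening flow with the non-collapse provided by the trapped orbit and with the geometry near the hyperbolic waist $w_1$ and the boundary $\partial A$ or $\partial D$, in order to prevent the minimizer from escaping to the pre-existing boundary structure. The argument will likely parallel the constructions in~\cite{CKMS} leading to proposition~\ref{pfin}.
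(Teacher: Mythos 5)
Your second half is the paper's: once you have two disjoint waists $w_1,w_2$ bounding a sub-annulus (resp.\ a waist bounding a sub-disk), lemma~\ref{L43} supplies the separating simple closed geodesic with conjugate points and the decomposition follows. The genuine gap is in the first half --- the production of the new waist --- and it sits exactly where you yourself flag "the main obstacle". Your mechanism (run curve shortening on a loop in $U_1$, resp.\ $V$, "enclosing a large portion of $\pi\dot\ga$") does not deliver a simple closed geodesic $\de$ in the \emph{interior} of the region. In the annulus case a non-contractible embedded loop in $U_1$ encloses nothing, and since $w_1\subset\partial U_1$ the curve-shortening limit (equivalently, the length minimizer given by lemma~\ref{L41} applied to the weakly convex $U_1$) may perfectly well be $w_1$ itself, which destroys the rest of the argument. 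In the disk case you must exclude both collapse to a point and convergence to $\partial D$, and "the loop would have to cross $\ga$ first" is not an argument for a complete, non-closed, possibly self-intersecting projected orbit; nothing in properties (i)--(v) of the shortening flow gives you such an avoidance principle.

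The device the paper uses, and which your proposal omits, is to feed the trapped set itself into lemma~\ref{invconv}: $\La:=\bigcap_{t\in\re}\phi_{-t}(SU)$ is flow-invariant, so every path-connected component of $A\setminus(w_1\cup\pi\La)$ (resp.\ of $D\setminus\pi\La$) is weakly convex. One takes the component $W$ containing the boundary curve $a_2$ lying on the side of $w_1$ that meets $\pi\La$ (resp.\ containing a collar of $\partial D$), applies lemma~\ref{L41} to the component of $\Emb(S^1,W)$ containing curves homotopic to $a_2$ (resp.\ to $\partial D$), and then shows the resulting minimizer is \emph{not} $w_1$ by a homotopy argument: fixing $x\in\pi\La\setminus w_1$ and $\e<d(x,w_1)$, any loop in $W$ homotopic to $a_2$ must leave the $\e$-collar of $w_1$, since a loop contained in that collar is homotopic to $w_1$ in $A\setminus\{x\}$ and hence not to $a_2$ there. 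Removing $\pi\La$ is what simultaneously guarantees weak convexity of the region where the minimization takes place, keeps the minimizing sequence away from the trapped orbits, and certifies that the new waist is distinct from $w_1$ and disjoint from the boundary. Without this step your $\de$ (and hence $w_2$, resp.\ $w$) is not known to exist where you need it, so the proof as written is incomplete at its central point.
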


    \begin{figure}
   \includegraphics[scale=.3]{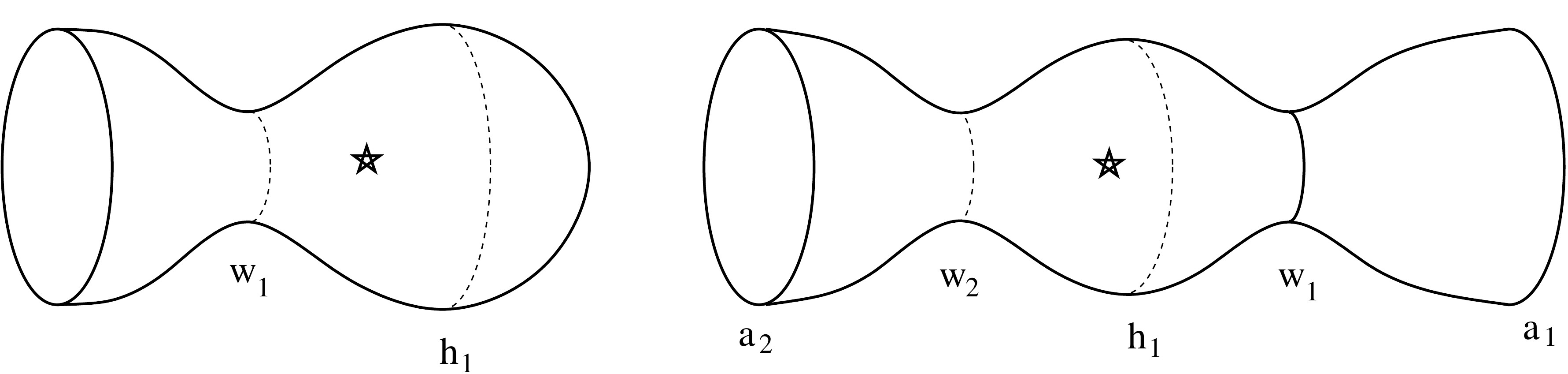}
   \caption{\footnotesize 
   These are examples of a decomposition of a bowl and a corset in lemma~\ref{lct} 
   when there is a new invariant subset projecting in its interior.
   The star $\star$ marks a point in the projection $\pi\La$ of the new invariant subset 
   and determines the homotopy class of the new waist.
  }
   \label{boset1}
   \end{figure}

\begin{proof}\quad

\eqref{lct1}. Write $\La:=\cap_{t\in\re}\phi_{-t}(SU)$.
By corollary~\ref{cco}.\eqref{cco2} and remark~\ref{rpo}.\eqref{rpo2}
there is a neighborhood $N$ of $\partial A$ such that 
 $\La\cap SN=\emptyset$.
 Since $U\cap w_1=\emptyset$ we have that 
 $\pi\La\not\subset w_1$.
 By lemma~\ref{invconv} any path-connected component of 
 $A\setminus (w_1\cup \pi\La)$ is weakly convex. 
Let $a_2$ be the connected component of $\partial A$
 which is included in  a connected component of $A\setminus w_1$
 which intersects $\pi\La$. Let $a_1$ be the other component of $\partial A$.
 Let $W$ be the connected component of $A\setminus (w_1\cup \pi\La)$ which 
 contains $a_2$. 
 Observe that $a_2$ is not homotopic to $w_1$ in $W$.

 Let $x\in\pi\La\setminus w_1$, and $\e>0$ with $d(x,w_1)>\e$.
 We claim that if a closed curve $\ga\subset W$ is homotopic to $a_2$ inside $W$
 then there is $y_\ga\in\ga$ such that $d(y_\ga,w_1)\ge\e$.
 For if 
 $$
 \ga\subset B(w_1,\e):= \{z\in A:d(z,w_1)<\e\},
 $$
  then $\ga$ is homotopic to $w_1$
 inside $A\setminus\{x\}$. Thus $\ga$ is non homotopic to $a_2$
 inside $A\setminus\{x\}$. Then $\ga$ is non homotopic to $a_2$ inside
 $W\subset A\setminus\{x\}$. A contradiction.
 Consequently, if $\eta$ is a $C^0$ limit of curves $\ga_n\subset W$ homotopic to
 $a_2$ inside $W$, then
 \begin{equation}\label{etaw1}
 \eta\ne w_1.
 \end{equation}
 
 Let $\cC$ be the connected component of $\Emb(S^1,W)$ containing
 a curve homotopic to $a_2$.
 By lemma~\ref{L41}  there is a sequence $\ga_n\in\cC$ 
 converging in the $C^2$ topology to a simple closed geodesic $w_2$ in $\ov W$
 of length $L(w_1)=\inf_{\zeta\in \cC}L(\zeta)>0$.
 Then $w_2$ is  a waist and by \eqref{etaw1},
 $w_2\ne w_1$.
 Since $w_1$, $w_2$ are waists in the same homotopy class 
 in $A$, we have that $w_1\cap w_2=\emptyset$ (c.f. \cite[Prop. 1.7]{FM}).
 Since $w_2\subset \ov W\subset A$, 
 if $w_2\cap a_2\ne\emptyset$ then $w_2$ and $a_2$ would be tangent geodesics
 (segments)
 and hence the same geodesic. But this is not possible because $a_2$ has conjugate points
 or is a polygon and $w_1$ is a waist.
  
  In the annulus  $A$ the curves 
 $a_1$, $w_1$, $w_2$, $a_2$ are all disjoint
 and homotopic. 
 There is an annulus $A(w_1,w_2)$ in $A$ with boundaries $w_1$ and $w_2$.
 By lemma~\ref{L43}.\eqref{l431} there is a non contractibe simple closed geodesic $h$ 
 with conjugate points in $\interior(A(w_1,w_2))$. 
 In particular $h$ is disjoint and homotopic to $w_i$, $a_i$, $i=1,2$.
 Denote the annuli $A_1:=A(a_1,h)$, $A_2=A(h,a_2)$ with boundaries $(a_1,h)$, $(h,a_2)$
 respectively. Then $(A_1,w_1)$, $(A_2,w_2)$ are the desired corsets.
 
 \eqref{lct2}. Write $\La:=\cap_{t\in\re}\phi_{-t}(SV)$.
 By corollary~\ref{cco}.\eqref{cco2} and remark~\ref{rpo}.\eqref{rpo2}
there is a neighborhood $N$ of $\partial D$ such that 
 $\La\cap SN=\emptyset$.   Let $W$ be the connected component 
 of  $D\setminus \pi\La$ which contains $\interior(N)$.
 Observe that $\partial D$ is non-contractible in $\ov W$.
 Let $\cC$ be a connected component in $\Emb(S^1,W)$ containing 
 curves homotopic to $\partial D$. By lemma~\ref{L41} apllied to $\cC$, there is a waist
 $w_1$ in $\ov W$. The waist $w_1$ bounds a disk $D_1$ in $D$.
 By lemma~\ref{L43}.\eqref{l432} there is a  simple  closed geodesic $h_1$
 in $\interior (D_1)$ with conjugate points. Let $B_1$ be the disk in $D$ with $\partial B_1= h_1$.
 Let $A_1=A(\partial D, h_1)$ be the annulus with boundary $\partial D\cup h_1$.
 Then $B_1$ is a bowl and $(A_1,w_1)$ is a corset with disjoint interiors 
 and  $D=A_1\cup B_1$ as required.
  
\end{proof}

\noindent{\bf Proof of theorem~\ref{TCBA}:}

If $M=\SS^2$ let $\Si_1,\Si_2$ be the Birkhoff annuli of a simple closed minimax geodesic
$m$
and let $R_1,R_2$  be the two disks bounded by $m$.
Otherwise let $R_1,\ldots, R_4$
be the disks in $M$ and  
$\Si_1$, $\Si_2$ be the surfaces of section of genus 1 obtained in 
subsection~\ref{genus1}.

Given and open subset $V\subset SM$ define the {\it forward trapped set}
$\trap_+(V)$ and the {\it backward trapped set} $\trap_-(V)$ as
$$
\trap_{\pm}(V)=\{\, z\in SM\,:\; \exists \tau\quad  \forall t>\tau\quad \phi_{\pm t}(z)\in V \,\}.
$$

\newpage

\begin{claim}\label{c410}\quad

For each $i=1,\ldots ,\{2,  4\}$ there are finitely many corsets 
$(A_1^i,w_1^i),\ldots,$ $(A_{m_i}^i,w_{m_i}^i)$ and a
bowl $B_{m_i}^i$ with disjoint interiors such that 
$R_i=A^i_1\cup\cdots\cup A^i_{m_i}\cup B^i_{m_i}$ and letting
\begin{gather}
K_i:=\partial R_i\cup \textstyle\cup_{j=1}^{m_i}(\partial A^i_j\cup w^i_j)\cup\partial B^i_{m_i},
\label{dK}\\
\label{ibdy}
\ga\text{ geodesic} \quad\ga(0)\in R_i \quad\then\quad \ga(\re)\cap K_i\ne \emptyset,\\
\trap_{\pm} SU_i\subset\textstyle \bigcup_{j=1}^{m_i} W^{s,u}(\dot w^i_j)\cup W^{s,u}(-\dot w^i_j),
\qquad
U_i:= R_i\setminus K_i.
\label{trui}
\end{gather}
\end{claim}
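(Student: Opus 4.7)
The plan is to iterate lemma~\ref{lct} starting from a single bowl and then analyze $\om$-limits in the resulting pieces to deduce~\eqref{ibdy} and~\eqref{trui}. First I view $R_i$ as a bowl: its boundary is either a geodesic polygon (when $g>0$, by the construction of \S\ref{genus1}) or a simple closed minimax geodesic with conjugate points (when $g=0$, by item~\eqref{tcba1}), and by lemma~\ref{invconv} the simply connected region $R_i$ is weakly convex.

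Next I iteratively split. While some current corset $(A,w)$ has $\bigcap_{t\in\re}\phi_{-t}(S(\interior A\setminus w))\ne\emptyset$, I split it via lemma~\ref{lct}.\eqref{lct1}; while the current residual bowl $B$ has $\bigcap_{t\in\re}\phi_{-t}(S(\interior B))\ne\emptyset$, I split it via lemma~\ref{lct}.\eqref{lct2}. Every split produces exactly one new waist, a simple nondegenerate closed geodesic without conjugate points inside $\interior R_i$, and all the waists produced are pairwise disjoint because they lie in disjoint corset interiors. Proposition~\ref{pfin} applied to the disk $R_i$ then gives finiteness of the set of waists, hence finite termination of the iteration, producing the desired corsets $(A^i_1,w^i_1),\ldots,(A^i_{m_i},w^i_{m_i})$ and residual bowl $B^i_{m_i}$; I then set $K_i$ as in~\eqref{dK} and $U_i:=R_i\setminus K_i$. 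By construction the maximal invariant sets $\bigcap_t\phi_{-t}(S(\interior A^i_j\setminus w^i_j))$ and $\bigcap_t\phi_{-t}(S(\interior B^i_{m_i}))$ are all empty.

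Assertion~\eqref{ibdy} then follows easily: if a geodesic $\gamma$ with $\gamma(0)\in R_i$ avoided $K_i$, its lift $\dot\gamma$ would lie in $SU_i$ for all time, so its orbit would sit inside a single connected component of $SU_i$ (crossing between components requires meeting $K_i$), contradicting the vanishing of the maximal invariant set in that component. For~\eqref{trui}, let $z\in\trap_+SU_i$; for all sufficiently large $t$ the orbit stays in a single connected component of $SU_i$, and $\om(z)$ is a nonempty compact invariant subset of the closure of that component in $SM$. By remark~\ref{rpo} and corollary~\ref{cco}, no invariant set can sit on a geodesic-polygon piece or a closed-geodesic-with-conjugate-points piece of the boundary. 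In the bowl case this forces $\om(z)\subset S(\interior B^i_{m_i})$, contradicting emptiness of the maximal invariant set, so no $z$ is trapped in the bowl component. In a corset component, the only invariant subsets of $SA^i_j$ disjoint from $S\partial A^i_j$ that survive the exhaustion are the waists $\pm\dot w^i_j$, so $\om(z)\subset\dot w^i_j\cup(-\dot w^i_j)$ and hence $z\in W^s(\dot w^i_j)\cup W^s(-\dot w^i_j)$. The $\trap_-$ case is symmetric, yielding unstable manifolds.

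The main obstacle is the last step of the $\om$-limit analysis in the corset case: turning the emptiness of the maximal invariant set in $S(\interior A\setminus w)$ into the statement that the only invariant subset of $SA$ relevant to forward-trapped orbits is $\pm\dot w$. A priori an $\om$-limit could touch $Sw$ at vectors not tangent to $w$, or involve orbits oscillating transversally across $w$; ruling this out will require combining the exhaustion of Proposition~\ref{pfin} (no further simple closed geodesics inside $\interior A$) with incompressibility of the geodesic flow and the structure of minimal sets of area-preserving surface dynamics, to conclude that the chain-recurrent set inside $SA$ reduces to $\pm\dot w$.
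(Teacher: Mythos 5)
Your decomposition argument --- iterating lemma~\ref{lct} starting from the bowl $R_i$ and invoking proposition~\ref{pfin} to terminate --- is exactly the paper's, and your derivation of \eqref{ibdy} from the vanishing of the maximal invariant sets is also the paper's. The genuine gap is the one you flag yourself: the corset case of \eqref{trui} is not actually proved, and the tools you propose for it (``incompressibility'' and ``the structure of minimal sets of area-preserving surface dynamics'') are both unnecessary and not obviously sufficient. Note also that your intermediate assertion ``no invariant set can sit on a closed-geodesic-with-conjugate-points piece of the boundary'' is literally false: if $c$ is such a geodesic, $\dot c\cup(-\dot c)$ is an invariant set sitting on it. What is true, and what you need, is that the $\om$-limit of a \emph{trapped} point cannot be $\pm\dot c$.

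The paper closes this elementarily, with no recurrence theory. First, since the forward orbit of $z$ eventually lies in $SU_i$ and hence never meets $SK_i$, no $y\in\om(z)$ can have an orbit crossing $K_i$ transversally (long pieces of the orbit of $z$ shadow the orbit of $y$ and would cross too), and a tangential touching forces the geodesic $\pi\phi_t(y)$ to coincide with a closed-geodesic component of $K_i$; together with $\bigcap_t\phi_t(SU_i)=\emptyset$ this yields $\pi(\om(z))\subset K_i$. Second, $\om(z)$ is connected and the components of $K_i$ are pairwise disjoint simple closed curves, so $\pi(\om(z))$ lies in a single component $c$; invariance then forces every vector of $\om(z)$ to be tangent to $c$, because a complete geodesic whose image is contained in a simple closed geodesic is that geodesic up to orientation, and no complete geodesic is contained in a geodesic polygon. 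Hence $\om(z)\subset\dot c\cup(-\dot c)$. Third, corollary~\ref{cco}.\eqref{cco1} and remark~\ref{rpo}.\eqref{rpo1} exclude $c$ being a polygon or a closed geodesic with conjugate points, since otherwise $\phi_t(z)$ would eventually enter the neighborhood $V$ of $\pm\dot c$ and be forced to cross $c\subset K_i$, contradicting trapping. So $c=w^i_j$, $\om(z)=\pm\dw^i_j$, and $z\in W^s(\pm\dw^i_j)$. Your worry about orbits ``oscillating transversally across $w$'' never arises: the trapped forward orbit lies in $SU_i$, whose footpoints already exclude $w^i_j$; and vectors of $\om(z)$ based on $w^i_j$ but not tangent to it are excluded by invariance once $\pi(\om(z))\subset K_i$ is established.
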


    \begin{figure}
   \includegraphics[scale=.35]{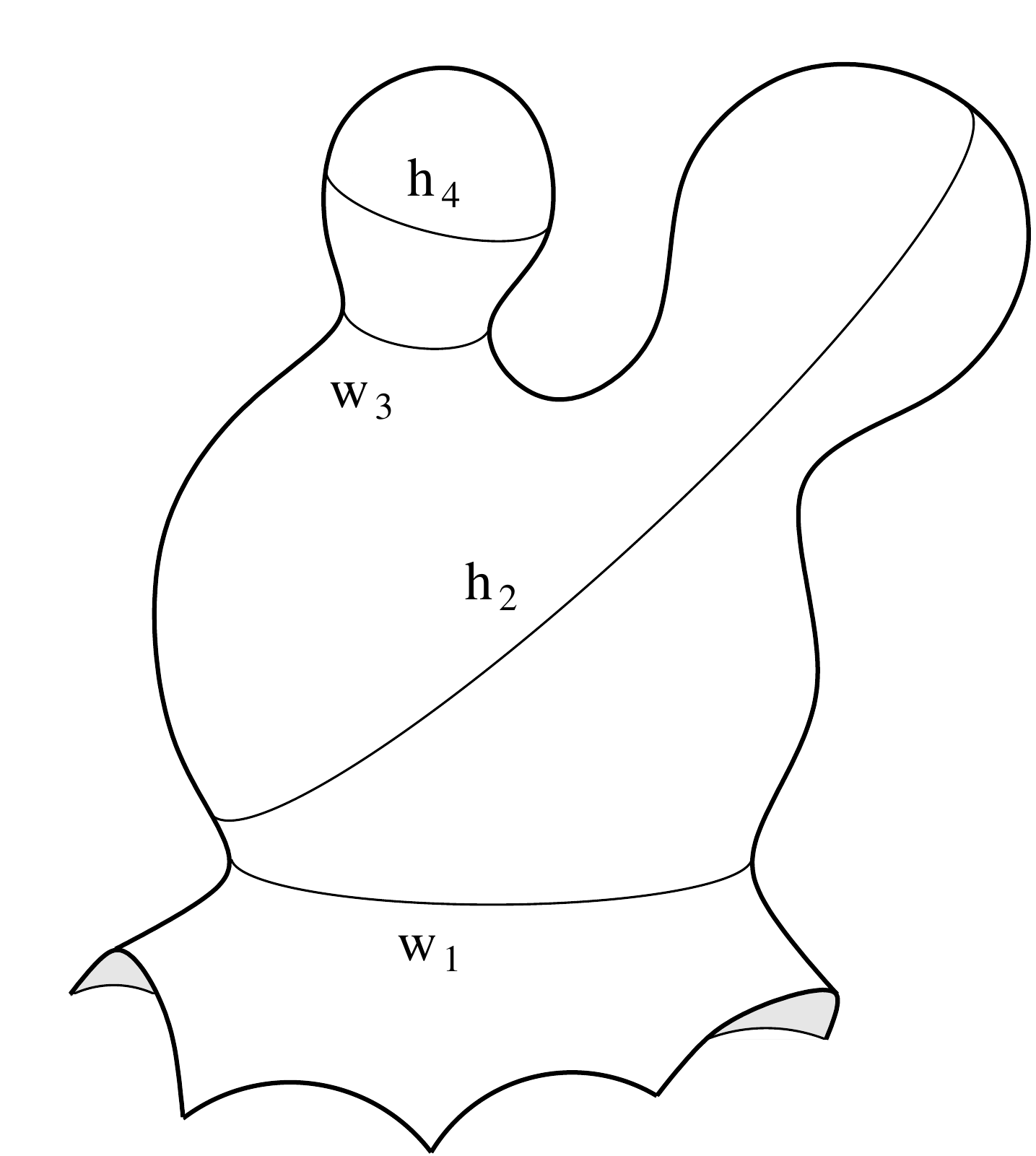}
   \caption{\footnotesize 
   This is a possible outcome of a decomposition $R_i=A_1\cup A_2\cup B_2$.
   The decomposition depends on the order in which invariant remaining subsets are given.
   The subindices are the order in which the closed orbits are found in this event.}
   \label{mickey}
   \end{figure}

Assume claim~\ref{c410} holds. 
Let $\Si_3,\ldots, \Si_{2n}$ be the collection of the two Birkhoff annuli of the 
geodesics in $K_i\cap \interior R_i$ whenever 
$\trap_\pm S(R_i\setminus\partial R_i)\ne\emptyset$.
Then \ref{TCBA}.\eqref{tcba1}-\eqref{tcba4} hold.
Also \eqref{ibdy} implies~\ref{TCBA}.\eqref{tcba5}.
By~\eqref{trui} we have that 
$$
\trap_\pm (SM\setminus\cup_{j=1}^{2n}\Si_j)\subset
\textstyle \bigcup_{i=1}^{\{2,4\}}\bigcup_{j=1}^{m_i}  
W^{s,u}(\dot w^i_j)\cup W^{s,u}(-\dot w^i_j).
$$
This implies~\ref{TCBA}.\eqref{tcba6} and~\ref{TCBA}.\eqref{tcba7}.

We have that $\Kfix=\cup_{i,j}\{\dot w^i_j,-\dot w^i_j\}$
and $\Krot =\cup_{i=1}^{2n}\partial\Si_i\setminus\Kfix$.
Since the orbits in $\Krot$ are either in a polygon or 
have conjugate points,
corollary~\ref{cco}.\eqref{cco1} and remark~\ref{rpo}.\eqref{rpo1}
imply~\ref{TCBA}.\eqref{tcba51}.

We now prove claim~\ref{c410}.
Observe that the disks $R_1,\ldots, R_{\{2,4\}}$ are bowls.
Recall that the surfaces $\Si_1,\Si_2$ can be constructed inside an arbitrarily
small neighborhood of $S(\cup_{i=1}^{\{2,4\}}\partial R_i)$.

If $\cap_{t\in\re}\phi_t(S(\interior R_i))\ne\emptyset$,
by lemma~\ref{lct}.\eqref{lct2} 
we can add a corset $(A_1,w_1)$ and a bowl $B_1$ with $R_i=A_1\cup B_1$,
$A_1\cap B_1=\partial B_1=\partial A_1\setminus\partial R_i=:b_1$.
Observe that $\{ \dot w_1,\dot b_1\}$ is a set of pairwise disjoint simple closed geodesics 
in $R_i$. Inductively, suppose we have corsets $(A_j,w_j)$, $j=1,\ldots, m$ and a bowl $B_m$
with disjoint interiors and $R_i=A_1\cup\cdots\cup A_m\cup B_m$.
Let $U=  \cup_{j=1}^m(\interior A_i - w_j)\cup\interior B_m$.
If $\La:=\cap_{t\in\re}\phi_t(SU)\ne\emptyset$ then either 
$\pi \La\cap \interior B_m\ne\emptyset$ or $\pi\La\cap (\interior A_j- w_j)\ne \emptyset$ for some $j$.
Therefore either $\cap_{t\in\re}\phi_t(S(\interior B_m))\ne\emptyset$ or
$\cap_{t\in\re}\phi_t(S(\interior{A_j}- w_j))\ne\emptyset$.
By lemma~\ref{lct} there is a corset $(A_{m+1},w_{m+1})$, and a bowl $B_{m+1}$ or 
a corset $(A'_j,w_j)$, with either  $A_j=A'_j\cup A_{m+1}$ or $B_m=A_{m+1}\cup B_{m+1}$
where the sets in the unions have disjoint interiors. In any case we obtain a new decomposition
$$
R_i=A_1\cup\cdots\cup A_{m+1}\cup B_{m+1},
$$
where $(A_j,w_j)$ are corsets and $B_{m+1}$ is a bowl, all with disjoint interiors.
The process can continue as long as 
\begin{equation}\label{conu}
\cap_{t\in\re}\phi_t(SU)\ne\emptyset, \quad U= \cup_{j=1}^{m+1}(\interior A_i - w_j)\cup\interior B_{m+1}.
\end{equation}
Here the closed simple geodesics $\{w_j\}_{j=1}^{m+1}$ are mutually disjoint waists and contained in $R_i$.
By proposition~\ref{pfin} this process must stop.
Then for each $R_i$, $i=1,\ldots,\{2,4\}$ there is $m=:m_i-1$ for which condition~\eqref{conu}
does not hold. This implies~\eqref{ibdy}.

 Let $U_i=R_i\setminus K_i$ be from~\eqref{trui}. Suppose that $z\in\trap_+(SU_i)$ then its $\om$-limit
 $$
 \om(z) := \textstyle\bigcap_{t>T}\ov{\phi_{[T,+\infty[}(z)}
 $$
 is an invariant set with projection $\pi(\om(z))\subset \ov{U_i}$.
 Since condition~\eqref{conu} does not hold for $U=U_i$ we have that 
 $\pi(\om(z))\cap U_i=\emptyset$. Therefore $\pi(\om(z))$ is a connected component 
 of $K_i$ in \eqref{dK}.
 By corollary~\ref{cco}.\eqref{cco1} and remark~\ref{rpo}.\eqref{rpo1} the forward orbit 
 of $z$ can not approach the boundary orbits in  $S(\partial A_j)$ or $SB_m$  
 without intersecting $SK_i$. Thus $\pi(\om(z)) = w^i_j$ for some $1\le j\le m_i$.
 Since $\om(z)$ is invariant, this implies that $\om(z)=\pm\dw^i_j$.
 This proves~\eqref{trui}.
 
 \qed


\def\cprime{$'$} \def\cprime{$'$} \def\cprime{$'$} \def\cprime{$'$}
\providecommand{\bysame}{\leavevmode\hbox to3em{\hrulefill}\thinspace}
\providecommand{\MR}{\relax\ifhmode\unskip\space\fi MR }
\providecommand{\MRhref}[2]{%
  \href{http://www.ams.org/mathscinet-getitem?mr=#1}{#2}
}
\providecommand{\href}[2]{#2}

\end{document}